\theoremstyle{plain}
\newtheorem{thm}{\protect\theoremname}[section]
  \theoremstyle{plain}
  \newtheorem{cor}[thm]{\protect\corollaryname}
  \theoremstyle{plain}
  \newtheorem{definition}[thm]{\protect\definitionname}
  \theoremstyle{plain}
  \newtheorem{lem}[thm]{\protect\lemmaname}
  \theoremstyle{plain}
  \newtheorem{rem}[thm]{\protect\remarkname}
  \theoremstyle{plain}
  \newtheorem{prop}[thm]{\protect\propositionname}
\def\IR{{\mathbb R}}
\def\RSsubtxt{section~}\newref{sub}{name = \RSsubtxt}}
\def\RSthmtxt{theorem~}\newref{thm}{name = \RSthmtxt}}
\def\RSlemtxt{lemma~}\newref{lem}{name = \RSlemtxt}}
\numberwithin{equation}{section}
\numberwithin{figure}{section}
\theoremstyle{plain}
\newtheorem{remark}[thm]{Remark}
\date{}
\author{Xinyi Li}
  \providecommand{\corollaryname}{Corollary}
  \providecommand{\remarkname}{Remark}
    \providecommand{\definitionname}{Definition}
  \providecommand{\lemmaname}{Lemma}
  \providecommand{\propositionname}{Proposition}
\providecommand{\theoremname}{Theorem}
\begin{document}
%\maketitle

%\begin{abstract}
% In this paper we investigate the percolative properties of Brownian interlacements, a model introduced by Alain-Sol Sznitman in \cite{SznBI}, and show that: the interlacement set is ``well-connected'', i.e., each pair of ``sausages'' in $d$-dimensional Brownian interlacements, $d\geq 3$, are connected via $\lceil (d-2)/2 \rceil$ intermediate sausages, while the vacant set undergoes a non-trivial phase transition when the level parameter varies. 
%\end{abstract}
%\thispagestyle{empty}
%
%\newpage

\title{\textbf{\normalsize{PERCOLATIVE PROPERTIES OF BROWNIAN INTERLACEMENTS\\ AND ITS VACANT SET}}}

\maketitle
\vspace{-0.7cm}

%\begin{center}
%{\large{Xinyi Li}}
%\par\end{center}{\large \par}

\vspace{1cm}

%\begin{center}
%Preliminary draft
%\par\end{center}
\begin{abstract}
 In this article we investigate the percolative properties of Brownian interlacements, a model introduced by Alain-Sol Sznitman in \cite{SznBI}, and show that: the interlacement set is ``well-connected'', i.e., any two ``sausages'' in $d$-dimensional Brownian interlacements, $d\geq 3$, can be connected via no more than $\lceil (d-4)/2 \rceil$ intermediate sausages almost surely; while the vacant set undergoes a non-trivial percolation phase transition when the level parameter varies. 
\end{abstract}
\vspace{11.0cm}

\date{\begin{flushright}{\small September 2019}\end{flushright}}

\vfill \noindent {\small -------------------------------- \\ Beijing International Center for Mathematical Research, Peking University, Beijing 100871, China} \thispagestyle{empty} 

\pagebreak{}\newpage{}

\pagebreak{}

\newpage{}

\mbox{} \thispagestyle{empty} \newpage

\section{Introduction}\label{sec:intro}

In this article, we investigate various aspects of the percolative properties of Brownian interlacements, and show that the interlacements are well-connected and that the vacant set undergoes a non-trivial phase transition. 

The model of Brownian interlacements, recently introduced by Sznitman in \cite{SznBI}, is the continuous counterpart of random interlacements, a model that has already attracted a lot of attention and has been relatively thoroughly studied (see \cite{SznAnnal} for the seminal paper on this model and see \cite{Mousquetaires} and \cite{Cernynotes} for a comprehensive introduction). Roughly speaking, Brownian interlacements can be described as a certain Poissonian cloud of doubly-infinite continuous trajectories in the $d$-dimensional Euclidean space, $d\geq3$, with the intensity measure governed by a parameter $\alpha>0$. We are interested in both the {\it interlacement set}, which is an $r$-enlargement (sometimes colloquially referred to as ``the sausages'') of the union of the trace in the aforementioned cloud (of trajectories), for some $r>0$, and the {\it vacant set}, which is the complement of the interlacement set. 

Brownian interlacements bear similar properties, for instance long-range dependence, to random interlacements, due to similarities in the construction. Moreover, this model plays a crucial role in both the study of the limiting behaviors of various aspects of random interlacements (see for example \cite{SznBI} and \cite{LiSzni13}),  and the interconnection of random interlacements, loop soups, and Gaussian free fields. Brownian interlacements, as a model of continuous percolation, could also shed light on the study of other models. For example, the visibility in the vacant set of Brownian interlacements is studied and compared with that of the Brownian excursion process in \cite{Visibility}.

\medskip

We now describe the model and our results in a more precise fashion. Readers are referred to Section \ref{notations} for notations and definitions. We consider Brownian interlacements on $\mathbb{R}^d$, $d\geq 3$. We denote by $\mathbb{P}$ 
the canonical law of Brownian interlacements 
and by $\mathcal{I}^\alpha_r$ (resp.~$\mathcal{V}^\alpha_r$) the corresponding interlacement set (resp.~vacant set) %(the complement of interlacement set)
 at level $\alpha\geq 0$ with radius $r\geq 0$, which is $\mathbb{P}$-a.s.~closed (resp. open).
\medskip

Let us look at the interlacement set first. 

As pointed out in (2.36) in \cite{SznBI}, it is presently known that for any $\alpha>0$ the Brownian fabric (i.e.~$\mathcal{I}^{\alpha}_0$) is connected when $d=3$ and disconnected when $d\geq 4$.
However, despite its name ``Brownian interlacements'', not much is a priori known about how the trajectories are actually interlaced: for example, the connectedness of $\mathcal{I}^\alpha_r$ for $\alpha,r>0$ in dimension $4$ and higher does not follow trivially from the definition (however it is a simple corollary of Theorem \ref{graphdistance}). 

In this work, we show that, for $d\geq 3$, and for all $\alpha,r>0$, %the set $\mathcal{I}^\alpha_r$
Brownian interlacements are well-connected in the following sense: two sausages in the interlacements can be connected via no more than $s_d-1$ intermediate sausages, where
\begin{equation}\label{eq:sddef}
s_d\stackrel{\triangle}{=}\left\lceil \frac{d-2}{2} \right\rceil
\end{equation}
 ($\lceil \frac{d-2}{2} \rceil$ stands for the smallest integer greater or equal to $\frac{d-2}{2}$).
 
We now phrase this result in a more precise fashion. Let $W$ stand for the collection of doubly-infinite continuous trajectories modulo time shift of  Brownian interlacements 
% (under the definition of (\ref{eq:equivalentdef})) 
at level $\alpha$. 
We associate with $W$ and $r>0$ a graph $G_{\alpha,r}= (V,E)$ where each  vertex in $V$ corresponds to a trajectory in $W$, 
and the set of edges $E$ consists of pairs of vertices whose corresponding sausages of radius $r$ intersect with each other.
Let ${\rm diam}(G_{\alpha,r})$ stand  for the diameter of $G_{\alpha,r}$. The result regarding the connectivity of interlacements is summarized in the following theorem.

\begin{thm}\label{graphdistance}
Let $G_{\alpha,r}$ be the graph defined as above. For all $\alpha,r>0$,
\begin{equation}
\mathbb{P}[{\rm diam}(G_{\alpha,r})=s_d]=1.
\end{equation}
\end{thm}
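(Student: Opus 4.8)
The plan is to prove the two bounds $\mathbb P[\mathrm{diam}(G_{\alpha,r})\le s_d]=1$ and $\mathbb P[\mathrm{diam}(G_{\alpha,r})\ge s_d]=1$ separately. For the lower bound it will be enough to show $\mathbb P[\mathrm{diam}(G_{\alpha,r})\ge s_d]>0$, since the event $\{\mathrm{diam}(G_{\alpha,r})\ge s_d\}$ is invariant under the (ergodic) action of the spatial translations of $\mathbb R^d$ on the interlacement. Both bounds are driven by a recursive ``cascade'' attached to a closed set $A$: put $\mathcal A_0(A)=A^{(r)}$, the closed $r$-neighbourhood, and
\[
\mathcal A_{k+1}(A)=\mathcal A_k(A)\cup\bigcup\{\,\mathrm{trace}(\sigma)^{(r)}:\ \sigma\in\mathrm{supp}(\omega),\ \mathrm{trace}(\sigma)^{(r)}\cap\mathcal A_k(A)\neq\emptyset\,\},
\]
so that $\mathcal A_k(A)$ is exactly the union of the radius-$r$ sausages within graph-distance $k$ of $A$. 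Set $\gamma_k=2(k+1)\wedge(d-2)$. The key quantitative input, to be proved by induction on $k$, is a two-sided control of $\mathcal A_k$ when $A$ meets $B(z,1)$: (i) with overwhelming probability (at least $1-\exp(-L^{\delta(d,k)})$ for some $\delta>0$) the set $\mathcal A_k(A)\cap B(z,L)$ has capacity at least $c_kL^{\gamma_k}$ and is ``spatially spread'' across $B(z,L)$ in a quantitative sense (its equilibrium measure charges each dyadic sub-box of $B(z,L)$ by roughly its proportional share); and (ii) deterministically $\mathrm{cap}(\mathcal A_k(A)\cap B(z,L))\le C_kL^{\gamma_k}$ and $\mathbb P[\mathcal A_k(A)\cap Q\neq\emptyset]\le C_k\rho^{2(k+1)-d}$ for a unit ball $Q$ at distance $\rho$ from $A$. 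The inductive step rests on the capacity asymptotics of the Wiener sausage (a single trajectory contributes capacity of order $L^{2\wedge(d-2)}$ in $B(z,L)$, with a logarithmic correction when $d=4$), the fact that the number of interlacement trajectories meeting a set is a Poisson variable of mean comparable to its capacity, and the observation that such trajectories evolve as free Brownian motions after their first contact with $\mathcal A_k$ and hence spread the capacity into a genuinely higher-dimensional set.

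For the upper bound, by the restriction property of Poisson processes and the Mecke equation it suffices to show: if two trajectories $\pi,\pi'$, distributed as interlacement trajectories conditioned to hit fixed balls $B$, resp.\ $B'$, are adjoined to an independent interlacement $\omega$, then $d_{G_{\alpha,r}}(\pi,\pi')\le s_d$ almost surely; a countable union over balls with rational data then covers every pair in $\mathrm{supp}(\omega)$. Write $\omega$ as a superposition of two independent interlacements $\omega_1+\omega_2$ (split the level), and build $\mathcal A_j(\pi)$ from $\omega_1$ and $\mathcal A_k(\pi')$ from $\omega_2$ with $j=\lfloor(s_d-1)/2\rfloor$, $k=\lceil(s_d-1)/2\rceil$, so that $j+k+1=s_d$. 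One has $\gamma_j+\gamma_k>d-2$: if neither term reaches the cap $d-2$ this equals $2(j+1)+2(k+1)=2s_d+2>d-2$, and otherwise it is immediate since $\gamma_j,\gamma_k\ge1$. Since $\mathcal A_j(\pi)$ and $\mathcal A_k(\pi')$ are independent and each has, at every large scale $L$, capacity of order $L^{\gamma_j}$, resp.\ $L^{\gamma_k}$, laid out in a spread-out way, they must meet inside $B(z,L)$ once $L$ is large: $\mathbb P[\mathcal A_j(\pi)\cap\mathcal A_k(\pi')\cap B(z,L)=\emptyset]$ decays stretched-exponentially in a positive power of $L$, hence is summable along $L=2^n$, so almost surely such an intersection occurs. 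Finally, any point of $\mathcal A_j(\pi)\cap\mathcal A_k(\pi')$ lies within $r$ of some $\tau_1$ with $d_{G_{\alpha,r}}(\pi,\tau_1)\le j$ and within $r$ of some $\tau_2$ with $d_{G_{\alpha,r}}(\pi',\tau_2)\le k$; then $\mathrm{trace}(\tau_1)$ and $\mathrm{trace}(\tau_2)$ lie within $2r$, so $\tau_1\sim\tau_2$ in $G_{\alpha,r}$ and $d_{G_{\alpha,r}}(\pi,\pi')\le j+1+k=s_d$.

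For the lower bound the claim is vacuous when $d\in\{3,4\}$ (then $s_d=1$ and $G_{\alpha,r}$ has at least two vertices), so let $d\ge5$; fix $R$ large and a point $x$ with $|x|=L\gg R$. With probability at least $1-2\exp(-\alpha\,\mathrm{cap}(B(0,R)))>0$ there are a trajectory $\pi\in\mathrm{supp}(\omega)$ meeting $B(0,R)$ and a trajectory $\sigma\in\mathrm{supp}(\omega)$ meeting $B(x,R)$. Now $d_{G_{\alpha,r}}(\pi,\sigma)\le s_d-1$ forces $\mathrm{trace}(\sigma)^{(r)}$ to meet $\mathcal A_{s_d-2}(\mathrm{trace}(\pi))$; applying the Mecke equation to the pair $(\pi,\sigma)$, decomposing $\mathbb R^d$ into dyadic annuli around the origin, and invoking the deterministic bound (ii) (the $\mu$-measure of trajectories meeting $B(x,R)$ and a unit ball $Q$ is $\lesssim\mathrm{dist}(x,Q)^{-(d-2)}$, and $\mathbb P[\mathcal A_{s_d-2}(\mathrm{trace}(\pi))\cap Q\neq\emptyset]\lesssim\mathrm{dist}(0,Q)^{2(s_d-1)-d}$) bounds the expected number of such pairs by $\lesssim\sum_Q\mathrm{dist}(x,Q)^{-(d-2)}\mathrm{dist}(0,Q)^{2(s_d-1)-d}\lesssim L^{2s_d-d}$, which tends to $0$ because $2s_d\le d-1<d$. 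Hence for $L$ large there is, with probability bounded away from $0$, a pair of trajectories at graph-distance $\ge s_d$, so $\mathbb P[\mathrm{diam}(G_{\alpha,r})\ge s_d]>0$; ergodicity promotes this to probability $1$. Combining the two bounds gives $\mathbb P[\mathrm{diam}(G_{\alpha,r})=s_d]=1$.

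The crux is the inductive two-sided cascade estimate, and above all its lower half (i): one must show not merely that passing from generation $k$ to $k+1$ multiplies the capacity by the correct power of the scale, but that the new capacity is deposited in a sufficiently spread-out fashion, uniformly over scales and with stretched-exponential probability, so that the dyadic Borel--Cantelli step and the disjointness estimate $\mathbb P[\mathcal A_j(\pi)\cap\mathcal A_k(\pi')\cap B(z,L)=\emptyset]\to0$ close up. This demands good concentration for both the number and the spatial distribution of interlacement trajectories attaching to a given set, together with a careful treatment of the Wiener-sausage capacity (the $d=4$ logarithm affects constants but not exponents). By comparison, the Poissonian reductions and the zero--one law are routine.
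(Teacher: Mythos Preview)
Your lower bound is essentially the paper's argument, just rephrased: both use Slivnyak--Mecke to bound the probability that trajectories through two distant balls are linked in at most $s_d-1$ steps by an iterated Green-function convolution, which is $O(|x-y|^{2s_d-d})\to 0$. Whether one concludes via ergodicity (your route) or by direct contradiction (the paper's) is cosmetic.

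The upper bound is where your proposal diverges substantially, and here there is a real gap. You opt for a \emph{meet-in-the-middle} scheme: split $\omega=\omega_1+\omega_2$, grow $\mathcal A_j(\pi)$ in $\omega_1$ and $\mathcal A_k(\pi')$ in $\omega_2$ with $j+k+1=s_d$, and argue that since $\gamma_j+\gamma_k>d-2$ the two cascades must intersect. You correctly identify the crux as the claim that $\mathbb P[\mathcal A_j(\pi)\cap\mathcal A_k(\pi')\cap B(z,L)=\emptyset]$ decays (stretched-exponentially, or even at all) in $L$, and that this requires not just a capacity lower bound but a quantitative ``spread-out'' property of the cascade. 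This is exactly the content you do not supply, and it is genuinely hard: two independent random sets of capacities $L^{\gamma_j}$ and $L^{\gamma_k}$ with $\gamma_j+\gamma_k>d-2$ need \emph{not} intersect without strong structural information on how that capacity is distributed. Your inductive hypothesis (i), with its stretched-exponential concentration for both the size and the spatial distribution of $\mathcal A_k\cap B(z,L)$, is asserted but not proved, and there is no obvious mechanism for propagating such concentration through the induction when the same $\omega_1$ is reused at every generation (so successive layers are dependent). Your claim in (ii) that $\mathrm{cap}(\mathcal A_k(A)\cap B(z,L))\le C_kL^{\gamma_k}$ holds \emph{deterministically} is also false as stated; it holds only in expectation.

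The paper sidesteps all of this by going \emph{one-sided}: it grows the cascade $s_d$ steps until it \emph{saturates}, i.e.\ $\mathrm{cap}(A^{s_d}(r,R))\asymp R^{d-2}$, and then a single independent Brownian motion hits it with uniformly positive probability---no spread-out hypothesis, no intersection of two fractals, just the elementary bound $P_z[H_A<\infty]\ge cR^{2-d}\mathrm{cap}(A)$. To make this work with only constant (not stretched-exponential) probability, the paper (a) uses a \emph{fresh independent} interlacement $\omega^{(s)}$ at each step of the cascade, so that the inductive capacity estimate is clean, and controls the lower bound via Paley--Zygmund (first and second moments only); and (b) creates independence \emph{across scales} by restricting to annuli $\omega^{(s)}_{r_k,\infty}$ on a rapidly growing sequence $r_k\ll R_k\ll r_{k+1}$, so that a conditional Borel--Cantelli lemma applies. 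The cost of the annular restriction is an error term $O(r^{d-2}R^2)$ in the capacity lower bound, which is harmless because $r^{d-2}\ll R$. This architecture is what lets the argument close with only second-moment input, and it is precisely what your two-sided scheme lacks.
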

As a corollary, we  obtain the connectedness of Brownian interlacements.
\begin{cor}
For all $\alpha,r>0$, the interlacement set $\mathcal{I}^\alpha_r$ is $\mathbb{P}$-a.s.~connected.
\end{cor}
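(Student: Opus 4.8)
The plan is to obtain the corollary as a direct consequence of Theorem~\ref{graphdistance}, combined with two elementary facts about the interlacement set: each of the sausages making it up is a connected subset of $\mathbb{R}^d$, and a union of connected sets whose ``intersection graph'' is connected is itself connected.

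First I would recall that, by construction, the interlacement set $\mathcal{I}^\alpha_r$ is the union, over the trajectories $w$ in the support of $\omega$, of the closed radius-$r$ sausages $S_r(w)$, i.e.\ the $r$-enlargements of the traces of the $w$'s. Since each $w$ is a continuous map from $\mathbb{R}$ into $\mathbb{R}^d$, its trace is path-connected, and the $r$-enlargement of a path-connected set is again path-connected (join two of its points by concatenating a path inside the trace with the two radial segments of length at most $r$); hence every sausage $S_r(w)$ is connected. Next I would invoke the standard topological lemma: if $\{A_i\}_{i\in I}$ is a family of connected subsets of a topological space and the graph $H$ on vertex set $I$, with an edge between $i$ and $j$ precisely when $A_i\cap A_j\neq\emptyset$, is connected, then $\bigcup_{i\in I}A_i$ is connected. (Indeed, were $\bigcup_{i}A_i=U\sqcup V$ a splitting into two disjoint, nonempty, relatively open pieces, each connected $A_i$ would lie entirely in $U$ or entirely in $V$; writing $I_U$ and $I_V$ for the corresponding index sets, both are nonempty and there is no $H$-edge between them, contradicting connectedness of $H$.) Here the family $\{A_i\}$ is exactly the collection of sausages indexed by $V$, and the associated graph $H$ is precisely $G_{\alpha,r}$.

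It therefore only remains to know that $G_{\alpha,r}$ is $\mathbb{P}$-a.s.\ connected; but Theorem~\ref{graphdistance} asserts $\mathbb{P}[\mathrm{diam}(G_{\alpha,r})=s_d]=1$ with $s_d=\lceil (d-2)/2\rceil<\infty$, and any graph of finite diameter is connected, so we conclude. There is no genuine obstacle here — the whole content sits in Theorem~\ref{graphdistance}, which we may assume; the only steps needing a word of care are the connectedness of a single sausage (immediate from continuity of Brownian paths) and, should one wish to regard $\mathcal{I}^\alpha_r$ as connected in the sense of closed sets, the remark that the union defining it is locally finite ($\mathbb{P}$-a.s.\ only finitely many trajectories of the cloud meet any fixed compact set, and a sausage meeting $B(x,\rho)$ forces its trajectory to meet $B(x,\rho+r)$), so that this countable union of closed sausages is again closed — although connectedness per se does not require this.
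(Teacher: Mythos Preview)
Your proof is correct and is exactly the intended argument: the paper states the corollary immediately after Theorem~\ref{graphdistance} without proof, treating it as an obvious consequence of the finite graph diameter, and your deduction via connectedness of each sausage plus the intersection-graph lemma is precisely how one fills in that step. (You also correctly read past the typo in the statement, which should say $\mathcal{I}^\alpha_r$ rather than $\mathcal{V}^\alpha_r$.)
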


We now make a few comments on Theorem \ref{graphdistance}. The formulation of this problem and the strategy of proof are inspired by \cite{GDRI}, which treated graph distance problem on random interlacements and obtained the same graph diameter as (\ref{eq:sddef}). It is worth mentioning that some of the methods and techniques used in \cite{GDRI} can also be adapted to serve as the backbone in the solution to other problems, such as \cite{RS2011}, \cite{RS2013} and \cite{CePo}. It is also worth mentioning that in the case of random interlacements, the same result can be proved through an essentially different approach, see \cite{PrTy}, which involves the notion of  ``stochastic dimensions''. However this notion is only defined on the discrete lattice and there is no adequate continuous equivalent.

\medskip

We  then turn to the vacant set. In this work, we show that for any $r>0$, the vacant set $\mathcal{V}^\alpha_r$ undergoes a non-trivial percolation phase transition. More precisely, we have 
 the following theorem (notice that  the vacant set is ``monotonously decreasing'' with respect to $\alpha$, i.e., it is possible to construct Brownian interlacements simultaneously for levels $\alpha_1>\alpha_2$ and $r>0$ in such a way that $\mathcal{V}^{\alpha_1}_r\subset \mathcal{V}^{\alpha_2}_r$, see (\ref{eq:monotonecoupling})).

\begin{thm}\label{thm:phase transition}
There exists $0<\alpha_{1}^{*}(d)<\infty$, such that
\begin{equation}\label{eq:pt}
\begin{split}
&\mbox{$\mathcal{V}_{r}^{\alpha}$ percolates $\mathbb{P}$-a.s., when $\alpha<\alpha_{r}^{*}\stackrel{\triangle}=\alpha_{1}^{*}r^{2-d}$; and}\\
&\mbox{$\mathcal{V}_{r}^{\alpha}$ does not percolate $\mathbb{P}$-a.s., when $\alpha>\alpha_{r}^{*}$}.
\end{split}
\end{equation}
\end{thm}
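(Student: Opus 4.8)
The plan is to establish the two halves separately by scaling and comparison arguments. First I note that the scaling relation $\alpha_r^* = \alpha_1^* r^{2-d}$ should come for free from a Brownian scaling identity: rescaling space by a factor $\lambda$ maps Brownian interlacements at level $\alpha$ with radius $r$ to those at level $\lambda^{2-d}\alpha$ with radius $\lambda r$ (the exponent $2-d$ being dictated by the intensity measure in \cite{SznBI}, which is built from the $d$-dimensional Green function / equilibrium measure). Hence it suffices to treat $r=1$, define $\alpha_1^*(d)$ as the supremum of the set of $\alpha$ for which $\mathcal{V}^\alpha_1$ percolates with positive probability, and then upgrade ``positive probability'' to ``probability one'' using a zero-one law — here the ergodicity of Brownian interlacements under the group of spatial translations (an analogue of the zero-one law available for random interlacements) gives that $\{\mathcal{V}^\alpha_r \text{ percolates}\}$ has probability $0$ or $1$ for each fixed $\alpha$. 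Combined with the monotone coupling (\ref{eq:monotonecoupling}), this makes $\alpha \mapsto \mathbb{P}[\mathcal{V}^\alpha_r \text{ percolates}]$ a $\{0,1\}$-valued non-increasing function, so there is a well-defined critical value; the content of the theorem is that it is neither $0$ nor $\infty$.

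For $\alpha_1^*(d) > 0$ (percolation for small $\alpha$): the natural route is a coarse-graining / static renormalization. I would tile $\mathbb{R}^d$ by boxes of a fixed large side length $L$, call a box good if inside it the vacant set $\mathcal{V}^\alpha_r$ contains a suitable crossing cluster and, say, the interlacement trajectories intersecting the box behave tamely. Using the explicit formula for $\mathbb{P}[\mathcal{I}^\alpha_r \cap K = \emptyset] = \exp(-\alpha \,\mathrm{cap}_{\mathbb{R}^d}(K^{(r)}))$ type estimates from \cite{SznBI} (capacity of the $r$-neighborhood of a compact set $K$), the probability that a given box is bad can be made arbitrarily small by taking $\alpha$ small, uniformly in $L$. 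The dependence between boxes is long-range but decays polynomially; one handles this with a Peierls-type argument for the good-box process that tolerates such dependence — e.g. the sprinkling-free version applies because the one-box bad-probability can be pushed below the threshold of the domination-by-product-measures criterion of Liggett–Schonmann–Stacey once $\alpha$ is small. An infinite connected component of good boxes then yields, by construction, an infinite vacant cluster. The main obstacle here is the geometric step: designing the definition of a good box so that good boxes chained together force a connected vacant path — in the continuous setting one must be careful that the vacant sets in adjacent boxes actually join up across the shared face, which requires building into ``good'' a nonrandom vacant ``bridge'' near the boundary and controlling the trajectories that could sever it.

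For $\alpha_1^*(d) < \infty$ (no percolation for large $\alpha$): I would run the dual argument, showing that when $\alpha$ is large the interlacement set $\mathcal{I}^\alpha_r$ itself percolates so robustly that it forms blocking surfaces disconnecting the vacant set. Again coarse-grain into boxes of side $L$; call a box good now if $\mathcal{I}^\alpha_r$ contains a crossing of the box in every direction (equivalently, the trajectories passing through it are dense enough to block any vacant path). For $\alpha$ large the probability a box is bad is tiny — quantitatively, the probability that $\mathcal{I}^\alpha_r$ misses a fixed sub-box is $\exp(-c\alpha)$ by the capacity formula — so again a Peierls argument on the dependent field of bad boxes shows that a.s. every bad box is surrounded by a ``surface'' of good boxes; since $d\geq 3$, such an enclosing surface of interlacement-occupied boxes traps any putative vacant cluster in a bounded region, ruling out percolation. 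Here the main subtlety is the topological input: one needs the (standard but slightly delicate) fact that in $\mathbb{Z}^d$, $d\geq 3$, a finite $*$-connected set is separated from infinity by a connected ``surface'' of good boxes, and one must verify that an interlacement crossing in each good box of that surface genuinely obstructs continuous vacant paths passing between adjacent boxes. I expect the geometric/topological bookkeeping in both directions — ensuring the coarse-grained good-box picture transfers faithfully to connectivity statements about the continuum sets $\mathcal{V}^\alpha_r$ and $\mathcal{I}^\alpha_r$ — to be the principal difficulty, whereas the probabilistic estimates follow relatively directly from the capacity formula and the renormalization machinery already developed for random interlacements.
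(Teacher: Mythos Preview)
Your structural setup --- scaling to reduce to $r=1$, ergodicity for the $0$--$1$ law, monotonicity to get a well-defined threshold --- is correct and matches the paper. The substantive gaps are in both halves of the actual argument.

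For the finiteness of $\alpha_1^*$, your proposed ``good box'' criterion (that $\mathcal{I}^\alpha_r$ crosses the box in every direction) does not block vacant paths when $d\geq 3$: a crossing is a one-dimensional sausage, and the complement of finitely many one-dimensional sausages in a $d$-dimensional box remains connected. So even a shell of such good boxes need not disconnect. The paper avoids this topological trap entirely: it never argues via blocking surfaces, but instead shows directly that a vacant crossing of an annulus at scale $6^n$ forces $2^n$ well-separated lattice points to lie in $\mathcal{V}^\alpha_1$, via R\'ath's dyadic-tree embedding (Lemmas \ref{set1}--\ref{spreadout}). The separation gives a capacity lower bound $\mathrm{cap}(\cdot)\geq c\,2^n$, so by (\ref{simplechar}) the probability is at most $\exp(-c\alpha 2^n)$, which beats the entropy $C^{2^n}$ for large $\alpha$.

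For the positivity of $\alpha_1^*$, your invocation of Liggett--Schonmann--Stacey is the problem: LSS requires finite-range dependence, whereas here the correlations decay only like $|x|^{2-d}$, and you offer no substitute. The paper again uses the dyadic tree, but in a two-dimensional slab $F$: by planar duality, non-percolation of $\mathcal{V}^\alpha_1\cap F$ forces a circuit of $\mathcal{I}^\alpha_\beta\cap F$, hence a planar crossing that must touch $2^n$ well-separated frames; a large-deviation bound on the number of frames hit by each trajectory (Proposition \ref{prop:lb}) then shows this has probability $\leq (1/4)^{2^n}$ for small $\alpha$. Both halves thus hinge on the multiscale separation built into the dyadic embedding, which is precisely the device that handles the long-range dependence without sprinkling and which your coarse-graining sketch lacks.
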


We refer to the case $0<\alpha<\alpha^*_r$ the {\it supercritical regime} and the case $\alpha>\alpha^*_r$ the {\it subcritical regime}, which is in line with random interlacements.

We now make a few comments about this theorem. 

%It is worth mentioning that t
The precise relation between $\alpha^*_r$'s for different values of $r$, given in (\ref{eq:pt}), is due to the scaling property of Brownian interlacements (see (\ref{scaling})), which also implies that it suffices to study the phase transition with regard to one parameter only. 
 
%Moreover,
The critical percolation threshold $\alpha^*_1$ could be related to some of the questions concerning the complement of the Wiener sausage wrapping on a unit $d$-dimensional torus discussed in \cite{GoDeH}, relevant in the local scale  $t^{-1/(d-2)}$ (i.e.~the local scale $\phi_{local}(t)$ in the terminology of (1.5) in \cite{GoDeH}, see also Section 1.6.3, ibid., especially (1.43)) when the Brownian motion on the unit torus runs over time $t$. It is plausible, yet not known at the moment, that $\alpha^*_1$ enters into play in the following way: when one runs Brownian motion on the unit $d$-dimensional torus for time $\alpha t$, and looks at the complement of the $t^{-1/(d-2)}$-neighborhood of the trajectory, then

- when $\alpha >\alpha^*_1$, for large $t$, there are only ``small'' components,
but 

- when $\alpha<\alpha^*_1$, for large $t$, there is a ``giant component'',

\noindent in analogy with what happens in the case of the discrete
$d$-dimensional torus of large side-length $n$, see \cite{RWfrag} and \cite{TeiWin}. Taking $t/\alpha$ to play the role of $t$, the same applies to the complement of the sausage of radius ${(\alpha/t)}^{1/(d-2)}$ % \phi_{local}(t)$ 
of the Brownian motion in time $t$, on the unit torus,
for large $t$ (depending on $\alpha >\alpha^*_1$ or  $\alpha<\alpha^*_1$).

In the course of proving Theorem \ref{thm:phase transition}, we are also able to show that $\mathcal{V}^{\alpha}_r$ undergoes another phase transition with respect to connectivity. More precisely,  let
\begin{equation}\label{eq:astarstar}
\alpha_r^{**}=\inf\{\alpha\geq 0: 
\liminf_{L\to\infty} 
\mathbb{P} [\exists
\textrm{ continuous path in }{\cal V}_{r}^{\alpha}
\textrm{ connecting }B_\infty(0,L)
\textrm{ and }\partial B_\infty(0,2L)]=0\}
\end{equation}
stand for the critical level of sharp connectivity decay for $\mathcal{V}^\alpha_r$, where $B_{\infty}(0,L)$ stands for a ball centered at the origin of size $L$ under $l^\infty$-norm, then 
\begin{equation}\label{eq:starleqdoublestar}
\alpha^*_r\leq \alpha^{**}_r<\infty.
\end{equation}
It is hence a very natural question whether $\alpha^{*}_r$ actually coincides with $\alpha^{**}_r$, which would imply that the phase transition is sharp.
Notice the similarity between $\alpha^{**}_r$ and the critical parameter $u_{**}$ for random interlacements, whose definition first appeared in \cite{Szni09} and was later improved subsequently in \cite{Szni12} and \cite{TeiPop}. % between (\ref{eq:starleqdoublestar}) and corresponding results for random interlacements.
As the corresponding conjecture for random interlacements has been open for a long time, we do not expect  a quick answer to this question here. See Remark \ref{rem:endremark} for more discussions. 

We also refer to Remark \ref{rem:endremark} for further discussions, such as the open question of the uniqueness of percolation cluster in the supercritical regime and the existence of a critical threshold for percolation on a plane.  %naturally ask that , whether unbounded cluster is unique in the supercritical regime. Unfortunately we cannot give an answer to it in this paper. See Remark \ref{rem:endremark} for more discussion.

\medskip

We now give some comments on the proofs. 

For Theorem \ref{graphdistance}, the strategy we employ is inspired by \cite{GDRI} and involves developing parallel estimates for Brownian interlacements. The lower bound of the graph diameter is essentially a convolution estimate of Green's function. The more involved upper bound is more technical than the discrete version, but generally follows from the same idea with similar capacity estimates. Pick a certain trajectory from the interlacement process, look at the corresponding vertex in the connectivity graph, and consider the union of sausages of trajectories that correspond to  vertices of distance at most $s$ from this vertex. We use induction to show that this union is ``$(2s+2)$-dimensional'' in terms of capacity and when $s=s_d-1$ it satuates the space in the sense that it will almost surely be hit by another trajectory from the interlacement process. This in turn gives the upper bound on the graph diameter. The above lines are of course mainly heuristic, to make sense of the above heuristics, a multi-scale analysis is employed. For more details, see the beginning of Section \ref{ss:ubgd}.

We now turn to Theorem \ref{thm:phase transition}. 
In this work we have chosen the combinatorial approach of \cite{Shortproof} instead of the standard route map in proving non-trivial phase transitions for interlacements, namely via the  ``sprinkling'' technique and decoupling inequalities (see \cite{Szni12} or Chapter 8 of \cite{Mousquetaires}), for the latter is lengthier and more involving 
(but yields more quantitative controls, for instance in the region
corresponding in our set-up to  $\alpha >  \alpha^{**}_r$). A more detailed explanation on the proof strategy can be found at the beginning of Section \ref{se:3}.

\medskip

We will now explain how this article is organized. In Section \ref{notations} we introduce notation and make a brief review on results concerning Brownian motion and its potential theory, the definition and basic properties of Brownian interlacements, renewal theory as well as other useful facts and tools. Section \ref{se:2} is devoted to the proof of Theorem \ref{graphdistance}. The lower bound on the graph distance is proved in Proposition \ref{prop:graphlb} and the upper bound on graph distance is proved in Proposition \ref{prop:gdub}.  In Section \ref{se:3} we prove Theorem \ref{thm:phase transition}. The dyadic trees are defined in Section \ref{se:3.1}, where some preliminary results are also stated. In Sections \ref{se:3.2} and \ref{se:3.3} we prove some preparatory results for the finiteness and positiveness of the percolation threshold  respectively,  and the proof of Theorem \ref{thm:phase transition} shall be completed in Section \ref{denouement}.

 Finally, we explain the convention in this work. We denote by $c$, $c'$, $c''$, $\overline{c},\ldots$ positive constants with values changing from place to place. Throughout the article, the constants depend on the dimension $d$. %Dependence on additional constants is stated at the beginning of each section. 
Unless otherwise stated, throughout the article we assume $d\ge3$.

\medskip

\noindent{\bf Acknowledgements.} The author wishes to express his gratitude to Alain-Sol Sznitman for suggesting these problems and for numerous valuable discussions and thank Art\"em Sapozhnikov and Ron Rosenthal for various useful discussions.

\section{Some useful facts}\label{notations}

In this section we introduce various notation and recall useful facts concerning Brownian
motion, its potential theory, Brownian interlacements and renewal theory.

\subsection{Basic notations}\label{se:basic}
In this subsection we introduce some useful notation. We write $\mathbb{N}=\{0,1,2,\ldots\}$ for the set of natural numbers
%, write $\mathbb{R}_0^+ =[0,\infty)$ for the set of non-negative real numbers 
and write $\mathcal{B}(\mathbb{R}^d)$ for the collection of Borel sets in $\mathbb{R}^d$. We write $|\cdot|$ and $|\cdot|_{\infty}$
for the Euclidean and $l^{\infty}$-norms on $\mathbb{R}^{d}$. We
denote by $B(x,r)=\{y\in\mathbb{R}^{d};\:|x-y|\leq r\}$ (resp.~$B^\circ(x,r)=\{y\in\mathbb{R}^{d};\:|x-y|< r\}$) the closed (resp.~open) 
Euclidean ball of center $x$ and radius $r\geq0$, and when $A$ is a
subset of $\mathbb{R}^{d}$, we write $B(A,r)=\cup_{x\in A}B(x,r)$
for the union of all closed balls of radius $r$ and with center in $A$ and call it the $r$-sausage of $A$. We also write $B_{\infty}(x,r)=\{y\in\mathbb{R}^{d},\:|x-y|_{\infty}\leq r\}$ for
the closed $l^{\infty}$-ball of center $x$ and radius $r$.  
In particular, for the sake of convenience we write $B(R)=B(0,R)$ for short. 
When $U$ is a subset of $\mathbb{R}^{d}$,
we denote by $\partial U$ the boundary  of $U$ and we denote by $\mathrm{Volume}(U)$ the volume of $U$.

We will make repeated use of the following basic observation:
\begin{equation}\label{latticeobservation}
\mbox{for every }a\in\mathbb{R}^d,\mbox{ there exists }\tilde{a}\in\mathbb{Z}^d\mbox{ such that }|a-\tilde{a}|\leq\frac{\sqrt{d}}{2}.
\end{equation}

We call $\gamma:[0,1]\to\mathbb{R}^{d}$ (resp.~ $\widetilde{\gamma}:[0,\infty)\to\mathbb{R}^{d}$) a continuous path from $A\subset\mathbb{R}^d$ to $B\subset\mathbb{R}^d$ (resp.~infinity), if $\gamma$ is continuous, $\gamma(0)\in A$ and $\gamma(1)\in B$ (resp.~$\limsup_{t\to\infty}|\widetilde{\gamma}(t)|=\infty$), and also say that $\gamma$ connects $A$ and $B$ (resp.~infinity). With slight abuse of preciseness, when we mention a continuous path we sometimes actually mean its trace, i.e., $\gamma([0,1])$ or $\gamma([0,\infty))$ as a subset of $\mathbb{R}^{d}$. 
We say that $A\subset \mathbb{R}^d$ {\it percolates}, if $A$ contains an unbounded connected subset. If in addition $A$ is open, then $A$ percolates if and only if there exists a continuous $\gamma:[0,\infty)\to \mathbb{R}^d$ such that $\gamma[0,\infty)\subset A$ and $\limsup_{t\to\infty} |\gamma(t)|=\infty$.%In this case we say that $A$ is connected to infinity through a continuous path that lies entirely in $A$.%if 

We now turn to discrete paths. We call $\gamma:\{0,1,\ldots,n\} \to\mathbb{Z}^{d}$ a nearest
neighbor path (resp.~$*$-path) 
for all $k\in\{0,1,\ldots n\}$, $|\gamma(k+1)-\gamma(k)|=1$ (resp.~$|\gamma(k+1)-\gamma(k)|_\infty=1$).
By definition a nearest neighbor path is also a $*$-path. Again, we do not distinguish a discrete path from its trace as a subset of $\mathbb{Z}^d$.

\subsection{Brownian motion and its potential theory}\label{bmpt}

In this subsection we introduce our notation for Brownian motion and state some useful results on the potential theory of Brownian motion. 

We denote by $W$ the subspace of $C(\mathbb{R},\mathbb{R}^{d})$, which consists of continuous trajectories from $\mathbb{R}$ into
$\mathbb{R}^{d}$ tending to infinity at both plus and minus infinite
times. Similarly, we denote by $W_{+}$ the subspace of $C(\mathbb{R}^{+},\mathbb{R}^{d})$
of continuous trajectories from $\mathbb{R}^{+}$ to $\mathbb{R}^{d}$,
tending to infinity at infinite time. We write $X_{t}$, $t\in\mathbb{R}$ (resp.~$X_{t}$, $t\geq0$) for the canonical process, and denote by $\theta_{t}$, $t\in\mathbb{R}$ (resp.~$\theta_{t}$, $t\geq0$)
the canonical shifts. The spaces $W$ and $W_+$ are endowed with respective $\sigma$-algebras $\mathcal{W}$ and $\mathcal{W}_+$ generated by the canonical processes. For the convenience of notation, we sometimes write $X(t)$ instead of $X_t$. For an index set $I\subset \mathbb{R}$, we write 
\begin{equation}
X_I=\bigcup_{i\in I} \{X_i\}
\end{equation}
for the trace of $X_t$ on $I$.

When $F$ is a closed subset of $\mathbb{R}^{d}$ and $w$ is in $W_{+}$, we write $H_{F}(w)=\inf\{s\geq0,X_{s}(w)\in F\}$ and $\widetilde{H}_{F}(w)=\inf\{s>0,X_{s}(w)\in F\}$
for the respective entrance time and hitting time of $F$. When $U$ is an open subset of $\mathbb{R}^{d}$, we write $T_{U}(w)=\inf\{s\geq0,X_{s}(w)\notin U\}$
for the exit time of $U$.   When $w\in W$,
we define $H_{F}(w)$ and $T_{U}(w)$ similarly, replacing
the condition $s\geq0$ by $s\in\mathbb{R}$.

\medskip
Now we turn to Brownian motion and its potential theory.

Since $d\ge3$, and in this case Brownian motion on $\IR^{d}$ is transient, we
view $P_{y}$, the Wiener measure starting from $y\in\mathbb{R}^{d}$,
as defined on $(W_{+},{\cal W}_{+})$, and denote by $E_{y}$ for
the corresponding expectation. Moreover, if $\rho$ is a finite measure (not necessarily a probability measure)
on $\IR^{d}$, we denote by $P_{\rho}$ and $E_{\rho}$ the measure $\int_{x\in\mathbb{R}^d}\rho(dx)P_x$ (not necessarily a probability measure) and its corresponding ``expectation'' (i.e.~the integral with respect to the measure $P_\rho$).

\medskip{}
We write
\begin{equation}
p_{t}(x,x')=\frac{1}{(\sqrt{2\pi t})^{d}}\exp\big(-\frac{|x-x'|^{2}}{2t}\big)\mbox{ for $t>0$, $x,x'\in\mathbb{R}^d$,}\label{transition}
\end{equation} for the Brownian transition density. Accordingly we denote the Green function of Brownian motion by
\begin{equation}\label{Greendef}
g(y,y')=\int_{0}^{\infty}p_{t}(y,y')\, dt,\;\;\mbox{for}\; y,y'\in\mathbb{R}^{d}.
\end{equation}
It is a classical result that 
\begin{equation}\label{eq:greenvalue}
g(y,y')=c|y-y'|^{2-d}\quad\mbox{for }y,y'\in\mathbb{R}^d.
\end{equation}
For $t\geq 0$, we write $P^t$ for the Brownian semi-group operator on $L^1(\mathbb{R}^d)$. More precisely, for all $f\in L^1(\mathbb{R}^d)$, we define $P^{t}f:\mathbb{R}^{d}\to\mathbb{R}$ in the following manner:

\begin{equation}
P^{t}f(x)=\int_{\mathbb{R}^{d}}f(y)p_{t}(x,y)dy.
\end{equation}
We write $G$ for the respective Green operator:
\begin{equation}
Gf(x)=\int_{0}^{\infty}P^tf(x)dt.
\end{equation}

We now derive in Lemma \ref{lem:Ptfbound} an upper bound on the $L^\infty$-norm of $P^t f$ in terms of the $L^1$ and $L^\infty$-norms of $f$, and in Lemma \ref{lem:greenfunctiondetailedcontrol} an estimate on Green function and Wiener sausages which is somewhat similar in flavor to Lemma 5.5 in \cite{PoisatErhard} but  tailor-made for the proof of Proposition \ref{prop:CactusEmin}.

\begin{lem}\label{lem:Ptfbound}
For all $f\in L^{\infty}(\mathbb{R}^{d})\cap L^{1}(\mathbb{R}^{d})$,
one has 
\begin{equation}
||P^{t}f||_{\infty}\leq\frac{C\max(||f||_{1},||f||_{\infty})}{(t\vee1)^{d/2}}.\label{eq:Ptfbound}
\end{equation}
\end{lem}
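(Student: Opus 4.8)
The plan is to bound $\|P^t f\|_\infty$ separately in two regimes and combine them. First I would handle the case $t \geq 1$: using the explicit transition density \eqref{transition}, for any $x$ we have $P^t f(x) = \int f(y) p_t(y,x)\, dy$, and since $p_t(y,x) = (2\pi t)^{-d/2}\exp(-|x-y|^2/2t) \leq (2\pi t)^{-d/2}$, we get $|P^t f(x)| \leq (2\pi t)^{-d/2}\|f\|_1$. Because $t \geq 1$, $(2\pi t)^{-d/2} = (2\pi)^{-d/2} t^{-d/2} = (2\pi)^{-d/2}(t\vee 1)^{-d/2}$, so this already yields $\|P^t f\|_\infty \leq C \|f\|_1 (t\vee 1)^{-d/2}$, which is dominated by the right-hand side of \eqref{eq:Ptfbound}.

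Next I would handle $0 \leq t < 1$ (the case $t=0$ being trivial since $P^0 f = f$). Here $(t\vee 1)^{d/2} = 1$, so the claimed bound is simply $\|P^t f\|_\infty \leq C\|f\|_\infty$. This is immediate from the fact that $P^t$ is a (sub-)Markovian operator: $p_t(\cdot,x)$ integrates to $1$ over $\mathbb{R}^d$ for each fixed $x$, hence $|P^t f(x)| \leq \int |f(y)| p_t(y,x)\, dy \leq \|f\|_\infty \int p_t(y,x)\, dy = \|f\|_\infty$. Thus $\|P^t f\|_\infty \leq \|f\|_\infty \leq \max(\|f\|_1,\|f\|_\infty) = \max(\|f\|_1,\|f\|_\infty)(t\vee 1)^{-d/2}$ on this range.

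Combining the two regimes, in either case $\|P^t f\|_\infty \leq C\max(\|f\|_1,\|f\|_\infty)(t\vee 1)^{-d/2}$ with, say, $C = \max(1,(2\pi)^{-d/2}) $ (or simply $C=1$ after noting $2\pi > 1$), which is \eqref{eq:Ptfbound}. There is no real obstacle here: the only thing to be mildly careful about is matching the exponent bookkeeping so that the single constant $C$ works uniformly in $t$, and making sure the $L^1$ bound (good for large $t$) and the $L^\infty$ bound (good for small $t$) are each invoked on the correct range. The lemma is essentially a packaging of the standard ultracontractivity-type estimate $\|P^t\|_{L^1 \to L^\infty} \leq C t^{-d/2}$ together with the trivial contractivity $\|P^t\|_{L^\infty \to L^\infty} \leq 1$, stitched at $t=1$.
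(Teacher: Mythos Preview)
Your proof is correct and follows essentially the same approach as the paper: split into the regimes $t\ge 1$ and $t<1$, use the pointwise bound $p_t\le (2\pi t)^{-d/2}$ together with the $L^1$-norm in the first case, and the fact that $p_t(\cdot,x)$ integrates to $1$ together with the $L^\infty$-norm in the second.
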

\begin{proof}
When $t\geq1$, one has

\begin{equation}
|P^{t}f(x)| \stackrel{(\ref{transition})}{=}
\left|\int_{\mathbb{R}^{d}}\frac{1}{(\sqrt{2\pi t})^{d}}e^{-\frac{(x-y)^{2}}{2t}}f(y)dy\right|\leq\frac{1}{(\sqrt{2\pi t})^{d}}\int_{\mathbb{R}^{d}}|f(y)|dy=Ct^{-d/2}||f||_{1}.\label{eq:Ptfg1}
\end{equation}
When  $t<1$, we have
\begin{equation}
|P^{t}f(x)|\stackrel{(\ref{transition})}{=}\left|\int_{\mathbb{R}^{d}}\frac{1}{(\sqrt{2\pi t})^{d}}e^{-\frac{(x-y)^{2}}{2t}}f(y)dy\right|\leq||f||_{\infty}\int_{\mathbb{R}^{d}}\frac{1}{(\sqrt{2\pi t})^{d}}e^{-\frac{(x-y)^{2}}{2t}}dy=||f||_{\infty}.\label{eq:Ptfl1}
\end{equation}
The claim (\ref{eq:Ptfbound}) then follows from combining (\ref{eq:Ptfg1})
and (\ref{eq:Ptfl1}).
\end{proof}

\begin{lem}\label{lem:greenfunctiondetailedcontrol}
Let $d\geq5$ and $(z_{i})_{i\geq1}$ be a sequence of points in
$\mathbb{R}^{d}$. We consider $(X^{i}_t)_{t\geq 0}$, $i\geq1$, a sequence of independent Brownian motions on $\mathbb{R}^{d}$ with $X^{i}(0)=z_{i}$, $i\geq1$, and write $E$ for the expectation with respect to their joint law. For all $z\in\mathbb{R}^d$, let $f_z(\cdot)=1_{B(z,1)}(\cdot)$.
For $i,j=1,\ldots,M$, we write
\begin{equation}\label{eq:FLdef}
F_L(i,j)=\int_{L/2}^{L}\int_{L/2}^{L}\int_{\mathbb{R}^{d}\times\mathbb{R}^{d}}g(x,y)f_{X^{i}_{s}}(x)f_{X^{j}_{t}}(y)dxdydsdt.
\end{equation}
Then for all positive integers $M$ and for all
$L\geq2$, 
\begin{equation}\label{bigupperbound1}
E\left[\sum_{i,j=1}^{M}F_L(i,j)\right]\leq C(ML+M^{2}L^{3-d/2}).
\end{equation}
\end{lem}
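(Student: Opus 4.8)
The plan is to estimate the double expectation by bounding the two types of contributions separately: the diagonal terms $i=j$ and the off-diagonal terms $i\neq j$. The key is that $F_L(i,j)$ is, up to a bounded factor coming from the integrals $f_{X^i_s}(x)$, comparable to the Green-function interaction $g(X^i_s,X^j_t)$ integrated over $s,t\in[L/2,L]$. More precisely, since $g(x,y)=c|x-y|^{2-d}$ and $f_{z}$ is the indicator of a unit ball, a standard convolution estimate gives $\int_{\mathbb{R}^d\times\mathbb{R}^d} g(x,y)f_{z}(x)f_{z'}(y)\,dx\,dy \le C\,\tilde g(z,z')$ where $\tilde g(z,z')= C$ if $|z-z'|\le 2$ and $\tilde g(z,z')\le C|z-z'|^{2-d}$ otherwise; in particular it is always bounded by $C$ and also by $C(1+|z-z'|)^{2-d}$ for $|z-z'|$ large. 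So $F_L(i,j)\le C\int_{L/2}^{L}\int_{L/2}^{L}\bigl(1\wedge |X^i_s-X^j_t|^{2-d}\bigr)\,ds\,dt$ up to constants.

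The second step is the diagonal bound. For $i=j$ we must control $E\bigl[\int_{L/2}^{L}\int_{L/2}^{L} G(X^i_s,X^i_t)\,ds\,dt\bigr]$ where I abbreviate $G(z,z')$ for the convolved kernel above. Writing $s<t$ and conditioning on $X^i_s$, the increment $X^i_t-X^i_s$ is Gaussian with variance $(t-s)$, so $E\bigl[G(X^i_s,X^i_t)\,\big|\,X^i_s\bigr]\le \int p_{t-s}(0,w)\,\tilde g(0,w)\,dw \le C(1\wedge (t-s)^{(2-d)/2})$ (integrate $|w|^{2-d}$ against the heat kernel, or bound by $\|p_{t-s}\|_\infty\cdot\|\tilde g\|_1$ when $t-s\ge1$ — note $\tilde g\in L^1$ since $d\ge5$ makes $|w|^{2-d}$ integrable near infinity, and it is bounded near $0$). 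Integrating $1\wedge (t-s)^{(2-d)/2}$ over the triangle $L/2\le s<t\le L$ (a region of sidelength $L/2$) yields a bound $CL$: the factor $L$ from one of the time variables, and the other time integral of $1\wedge r^{(2-d)/2}$ over $r\in[0,L/2]$ converges since $d\ge5$ gives exponent $(2-d)/2<-1$. Summing over $i=1,\dots,M$ gives the $CML$ term. Here Lemma 1.3 (with $f=\tilde g$ truncated, or directly the heat-kernel bound) is the natural tool for the $t-s\ge1$ part.

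The third step is the off-diagonal bound. For $i\neq j$, $X^i$ and $X^j$ are independent, so conditioning on $X^j_t$, the point $X^i_s$ has law $p_s(z_i,\cdot)$, and $E\bigl[G(X^i_s,X^j_t)\bigr] = \int\!\!\int p_s(z_i,x)p_t(z_j,y)\tilde g(x,y)\,dx\,dy$. Bounding $\tilde g$ by its $L^1$-$L^\infty$ interpolation and using Lemma 1.3: convolving $\tilde g$ first against $p_s$ (with $s\ge L/2\ge1$) gives $\|P^s\tilde g\|_\infty \le C\max(\|\tilde g\|_1,\|\tilde g\|_\infty)(s\vee1)^{-d/2}\le CL^{-d/2}$, and then integrating the resulting bounded function against the probability density $p_t(z_j,\cdot)$ costs nothing. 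Thus $E[G(X^i_s,X^j_t)]\le CL^{-d/2}$ uniformly, and integrating over $s,t\in[L/2,L]$ multiplies by $L^2$, giving $CL^{2-d/2}$ per pair and $CM^2 L^{2-d/2}$ after summing — which is even better than the claimed $M^2L^{3-d/2}$, so there is room to spare (the paper's weaker exponent presumably accommodates a cruder intermediate step). Combining the diagonal and off-diagonal estimates yields $E\bigl[\sum_{i,j=1}^M F_L(i,j)\bigr]\le C(ML+M^2L^{3-d/2})$ as claimed.

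The main obstacle I anticipate is the diagonal term: one must be careful that the "self-interaction" $\int\!\!\int g(X_s,X_t)$ is genuinely finite and of the right order, which hinges on the absolute convergence of $\int_0^{L/2}(1\wedge r^{(2-d)/2})\,dr$, i.e. on $d\ge5$ (for $d=4$ this would produce a logarithm, for $d=3$ it would diverge), exactly matching the hypothesis $d\ge5$. The off-diagonal term is more robust and is essentially a direct application of Lemma 1.3. A minor technical point to handle cleanly is the reduction from $F_L(i,j)$ (with the two unit-ball indicators) to the pointwise kernel $\tilde g$, which is just the convolution estimate $\int g(x,y)1_{B(z,1)}(x)1_{B(z',1)}(y)\,dx\,dy\le C(1\wedge|z-z'|^{2-d})\cdot(\text{Volume of unit ball})^2$-type bound and the observation that this kernel lies in $L^1\cap L^\infty$ when $d\ge5$.
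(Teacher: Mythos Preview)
Your overall strategy---split into diagonal $i=j$ and off-diagonal $i\neq j$ terms and control each via heat-kernel bounds---matches the paper's approach. But there is a genuine error in your execution that undermines both your off-diagonal bound and one of your two justifications for the diagonal bound.

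You assert that $\tilde g\in L^1(\mathbb{R}^d)$ because ``$d\ge5$ makes $|w|^{2-d}$ integrable near infinity''. This is false: in polar coordinates, $\int_{|w|>1}|w|^{2-d}\,dw=c_d\int_1^\infty r^{2-d}r^{d-1}\,dr=c_d\int_1^\infty r\,dr=\infty$, regardless of $d$. You have confused integrability at the origin (which does hold for $d\ge3$) with integrability at infinity (which never holds). Consequently Lemma~\ref{lem:Ptfbound} does not apply to $\tilde g$, and your claim $\|P^s\tilde g\|_\infty\le CL^{-d/2}$ is unjustified. The correct bound, obtained by integrating $|w|^{2-d}$ against the heat kernel directly (your own ``first justification'' in the diagonal step), is $\|P^\tau\tilde g\|_\infty\le C\tau^{1-d/2}$. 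With this, the off-diagonal term becomes $CL^2\cdot L^{1-d/2}=CL^{3-d/2}$, exactly the paper's exponent---so there is in fact no ``room to spare''. Your diagonal bound survives because your primary justification there (direct integration against the heat kernel) is correct; only the parenthetical alternative via $\|\tilde g\|_1$ fails.

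The paper sidesteps this pitfall by never passing to the convolved kernel $\tilde g$: it keeps the original indicator $f_{z_j}$, which genuinely lies in $L^1\cap L^\infty$, and applies Lemma~\ref{lem:Ptfbound} to that. Specifically, for $i\neq j$ it writes $E[F_L(i,j)]=E\bigl[\int_{[L/2,L]^2}\langle f_{X^i_s},GP^tf_{z_j}\rangle\,dt\,ds\bigr]$, expands $G=\int_0^\infty P^u\,du$, and bounds $\langle f_{X^i_s},P^uf_{z_j}\rangle\le \|f_{X^i_s}\|_1\|P^uf_{z_j}\|_\infty\le Cu^{-d/2}$ for $u\ge t\ge L/2$. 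This is the cleaner route and explains why the paper's exponent is $3-d/2$ rather than the $2-d/2$ you (incorrectly) obtained.
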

\begin{proof}
We divide the summation into two cases, namely $i=j$ and $i\neq j$. To prove (\ref{bigupperbound1}), it suffices to prove that 
for all $i=1,\ldots,M$, 
\begin{equation}\label{stupid1}
E[F_L(i,i)]\leq cL
\end{equation}
and for all $i,j=1,\ldots,M$, $i\neq j$,
\begin{equation}\label{stupid2}
E[F_L(i,j)]\leq c'L^{3-d/2}.
\end{equation}
We first prove (\ref{stupid1}). For $f,g:\mathbb{R}^d\to\mathbb{R}$, let $\langle f,g\rangle$ stand for the inner product of $f$ and $g$. We then rewrite $E[F_L(i,i)]$ in
the form of semi-group operators:
\begin{eqnarray}
 E[F_L(i,i)]& = & 2E\left[\int_{L/2}^{L}\int_{s}^{L}\int_{\mathbb{R}^{d}\times\mathbb{R}^{d}}g(x,y)f_{X^{i}_{s}}(x)f_{X^{i}_t}(y)dxdydtds\right]\nonumber\\
&= & 2E\left[\int_{L/2}^{L}E^*\left[\int_{0}^{L-s}\int_{\mathbb{R}^{d}\times\mathbb{R}^{d}}g(x,y)f_{X^{i}_{s}}(x)f_{X^{i}_{s}+X^*_{t'}}(y)dydxdt'ds\right]\right]\\
 & = & 2E\left[E^* \left[ \int_{L/2}^{L}\int_{0}^{L-s}\langle f_{X^{i}_{s}},Gf_{X^{i}_{s}+X^*_{t'}}\rangle dt'ds\right]\right],\nonumber
\end{eqnarray}
where we denote by $X^*_t$ a Brownian motion started from $0\in\mathbb{R}^d$ which is independent from  $(X^{i}_t)_{t\geq 0}$, $i\geq1$, and write $E^*$ for its respective expectation.
We are now ready to show (\ref{stupid1}) with the help of (\ref{eq:Ptfbound}) from Lemma \ref{lem:Ptfbound}. Notice that by the observation $f_{a}(b)=f_{a-b}(0)$ it is straightforward that for $t'\geq 0$,
\begin{equation}
E^{*}[f_{X^{i}_{s}+X^*_{t'}}(x)]=E[f_{X^{i}_{s}}(-X^*_{t'}+x)]]=P^{t'}f_{X^{i}_{s}}(x),
\end{equation}
hence we obtain that
\begin{eqnarray*}
 E[F_L(i,i)] & = & 2E\left[\int_{L/2}^{L}\int_{0}^{L-s}\langle f_{X^{i}_{s}},GP^{t'}f_{X^{i}_{s}}\rangle dt'ds\right]\\
 & \stackrel{(\ref{Greendef})}{=} & 2E\left[\int_{L/2}^{L}\int_{0}^{L-s}\int_{t'}^{\infty}\langle f_{X^{i}_{s}},P^{u}f_{X^{i}_{s}}\rangle dudt'ds\right]\\
 & \stackrel{(\ref{eq:Ptfbound})}{\leq} & \int_{L/2}^{L}\int_{0}^{L-s}\int_{t'}^{\infty}\frac{c}{(1\vee u)^{d/2}}dudt'ds\\
 & \leq & \int_{L/2}^{L}\int_{0}^{L-s}c'(1\vee t')^{1-d/2}dt'ds\leq c''L.
\end{eqnarray*}
The claim (\ref{stupid1}) hence follows.

Now we prove (\ref{stupid2}). Similarly, we know that for all $i,j=1,\ldots,M$,
$i\neq j$,
\begin{eqnarray*}
E[F_L(i,j)]  & =  &E\left[\int_{[L/2,L]^{2}}\langle f_{X^{i}_{s}},Gf_{X^{j}_{t}}\rangle dtds\right]=E\left[\int_{[L/2,L]^{2}}\langle f_{X^{i}_{s}},GP^{t}f_{z_{j}}\rangle dtds\right]\\
 & \stackrel{(\ref{Greendef})}{=}  &E\left[\int_{[L/2,L]^{2}}\int_{t}^{\infty}\langle f_{X^{i}_{s}},P^{u}f_{z_{j}}\rangle dudtds\right]\\
 & \stackrel{(\ref{eq:Ptfbound})}{\underset{(L/2\geq1)}{\leq}} & \int_{[L/2,L]^{2}}\int_{t}^{\infty}\frac{C'}{u^{d/2}}dudtds\leq  CL^{3-d/2}.
\end{eqnarray*}
This confirms (\ref{stupid2}) as well as (\ref{bigupperbound1}) and finishes the proof of Lemma \ref{lem:greenfunctiondetailedcontrol}.
\end{proof}
\medskip
We now give a very brief introduction to Brownian capacity. We refer  readers to \cite{PortStone78} or Chapter 2 of \cite{Szni98a} for more details.

Let $K$ be a compact subset of $\mathbb{R}^{d}$. We denote by $e_{K}$
the equilibrium measure of $K$ (see Theorem 1.10, p.~58 of \cite{PortStone78}),  which is supported on the boundary of $K$. We denote by $\widetilde{e}_K$ the normalized equilibrium measure and denote  by $\mathrm{cap}(K)$ the (Brownian) capacity  of $K$ which is equal to the total mass of $e_K$.   There is a basic property relating equilibrium measure and the hitting probabilities which we will make repeated use of later in this work (see e.g.~the proof of Theorem 1.10, p.58 in \cite{PortStone78}):
\begin{equation}
P_{z}(H_K<\infty)=\int_{\mathbb{R}^d} g(z,y)e_{K}(dy).\label{eq:hitproba}
\end{equation}
The following lemma, which is a simple corollary of (\ref{eq:hitproba}),  is also a useful tool for estimating the Brownian capacity of a set.
\begin{lem}
For a compact $K\subset\mathbb{R}^{d}$ with positive volume, one has
\begin{equation}
\frac{{\rm Volume}(K)}{\sup_{z\in K}\int_{K}g(z,y)dy}\leq\mathrm{cap}(K)\leq\frac{{\rm Volume}(K)}{\inf_{z\in K}\int_{K}g(z,y)dy}.\label{eq:capest}
\end{equation}
\end{lem}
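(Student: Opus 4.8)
The plan is to derive both inequalities from tools already at hand: the variational characterization of Theorem~\ref{Thm4.9} for the left-hand bound, and the hitting-probability identity (\ref{eq:hitting proba}) for the right-hand one. Throughout I would abbreviate $h(x)=\int_K g(x,y)\,dy$; first one should record that $h$ is finite and bounded on $K$, since $g(x,y)=c|x-y|^{2-d}$ has a locally integrable singularity for $d\geq3$ and $K$ is bounded, and that $\inf_{x\in K}h(x)>0$ because $g>0$ and $\mathrm{Volume}(K)>0$. Hence all four quantities in the statement are finite and the denominators do not vanish.

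For the lower bound on $\mathrm{cap}(K)$, I would apply Theorem~\ref{Thm4.9} with the test measure $\mu(dx)=\mathrm{Volume}(K)^{-1}\,1_K(x)\,dx\in M_1(K)$, whose energy satisfies
\[
\int_{K\times K}g(x,y)\,\mu(dx)\,\mu(dy)=\frac{1}{\mathrm{Volume}(K)^{2}}\int_K\Big(\int_K g(x,y)\,dy\Big)dx\leq\frac{\sup_{x\in K}h(x)}{\mathrm{Volume}(K)}.
\]
Thus the infimum in (\ref{eq:capacityvariation}) is at most $\sup_{x\in K}h(x)/\mathrm{Volume}(K)$, and taking reciprocals gives $\mathrm{cap}(K)\geq\mathrm{Volume}(K)/\sup_{x\in K}h(x)$.

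For the upper bound, I would note that for $z\in K$ one has $H_K=0$ under $P_z$ (as $K$ is closed and $X_0=z$), hence $P_z(H_K<\infty)=1$, so (\ref{eq:hitting proba}) yields $\int_{\mathbb{R}^d}g(z,y)\,e_K(dy)=1$ for every $z\in K$ outside a polar (hence Lebesgue-null) set of irregular points, which is harmless below. Integrating this identity over $z\in K$ against Lebesgue measure and applying Tonelli's theorem (the integrand is non-negative), together with the symmetry $g(z,y)=g(y,z)$ and the fact that $e_K$ is supported on $\partial K\subseteq K$,
\[
\mathrm{Volume}(K)=\int_K\Big(\int_{\mathbb{R}^d}g(z,y)\,e_K(dy)\Big)dz=\int_{\mathbb{R}^d}\Big(\int_K g(z,y)\,dz\Big)e_K(dy)=\int_{\partial K}h(y)\,e_K(dy).
\]
Bounding $h(y)\geq\inf_{x\in K}h(x)$ on the support of $e_K$ and recalling that $e_K$ has total mass $\mathrm{cap}(K)$ gives $\mathrm{Volume}(K)\geq\mathrm{cap}(K)\,\inf_{x\in K}h(x)$, which is the right-hand inequality.

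The argument is essentially computational, so there is no serious obstacle; the only points needing a little care are the integrability/positivity remarks that make the four quantities meaningful, the application of Tonelli, and the standard fact that (\ref{eq:hitting proba}) holds for $z\in K$ up to a polar set (irrelevant under the $dz$-integration). One could, alternatively, obtain the left-hand inequality from the very same identity by bounding $h(y)\leq\sup_{x\in K}h(x)$ instead, which would bypass Theorem~\ref{Thm4.9} entirely; I would likely keep the variational proof of the lower bound since Theorem~\ref{Thm4.9} has just been recalled.
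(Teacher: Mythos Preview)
Your proof of the lower bound is exactly the paper's: plug the normalized Lebesgue measure on $K$ into the variational formula of Theorem~\ref{Thm4.9}.

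For the upper bound you take a genuinely different, and cleaner, route. The paper works with the $\epsilon$-enlargement $K_\epsilon=B^\circ(K,\epsilon)$, invokes a second variational characterization of capacity via the Dirichlet form (citing Fukushima--Oshima--Takeda), tests against the potential $G\phi$ of a suitable multiple $\phi$ of $1_K$, compares with the minimizing equilibrium potential $h(\cdot)=P_\cdot[H_{K_\epsilon}<\infty]$, and then sends $\epsilon\to0$. Your argument bypasses all of this: integrating the identity $\int g(z,y)\,e_K(dy)=1$ (valid for Lebesgue-a.e.\ $z\in K$, since irregular points are polar) over $K$ and applying Tonelli gives $\mathrm{Volume}(K)=\int h(y)\,e_K(dy)\geq \inf_{x\in K}h(x)\cdot\mathrm{cap}(K)$ in one line. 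This is more elementary---no Dirichlet-form machinery, no $\epsilon$-limit to justify---and uses only tools already stated in the paper. The paper's approach has the mild advantage of sidestepping any discussion of boundary regularity, but your handling of that point (polar $\Rightarrow$ Lebesgue-null) is standard and adequate. Your remark that the same integration, with the opposite bound on $h$, would also recover the lower inequality is correct and worth noting.
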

\begin{proof}
Note that for $z\in K$, $P_z(H_K<\infty)=1$. This implies that
$$
\int_{K}\int_{K} g(z,y) e_{K}(dy) dz={\rm Volume}(K).
$$
The claim (\ref{eq:capest}) follows from (\ref{eq:hitproba}) and the observation that $g(y,z)=g(z,y)$.
\end{proof}

The  Brownian capacity satisfies sub-additivity and monotonicity. It is also invariant under translations and rotations. More precisely, for compact $A,B\subset\mathbb{R}^{d}$, one has
\begin{equation}
{\rm cap}(A\cup B)\leq{\rm cap}(A)+{\rm cap}(B).\label{eq:union}
\end{equation}
And if $A\subset B$, then 
\begin{equation}
{\rm cap}(A)\leq{\rm cap}(B).\label{eq:monotonicity}
\end{equation}
For $x\in\mathbb{R}^d$ and $\vec{\rho}$ some rotation in $\mathbb{R}^d$, if $A'=A+x$ and $A''=\vec{\rho}(A)$ then
\begin{equation}
{\rm cap}(A')={\rm cap}(A)={\rm cap}(A'').
\end{equation}
See (4.15), Chap.~2, p.~70 and (4.17), Chap.~2, p.~71, in \cite{Szni98a} for more details.

It is a classical result (see e.g.~(3.55), p.~63 in \cite{Szni98a}) that for $R\geq0$
\begin{equation}\label{eq:ballcap}
{\rm cap}(B(0,R))=CR^{d-2}.
\end{equation}

We now state a classical variational characterization of the Brownian capacity (see Theorem 4.9, Chap 2, p. 76 in \cite{Szni98a}).
\begin{thm}\label{Thm4.9}
Given $K$, a compact subset of $\mathbb{R}^d$, we denote by $M_{1}(K)$ the space of probability
measures on $K$. Then
\begin{equation}
\mathrm{cap}(K)=\left(\inf\left\{{\cal E}(\mu),\mu\in M_{1}(K)\right\}\right)^{-1}\label{eq:capacityvariation}
\end{equation}
where for $\mu\in M_{1}(K)$, ${\cal E}(\mu)=\int_{K\times K}g(x,y)\mu(dx)\mu(dy)$.
\end{thm}

\subsection{Brownian interlacements}\label{ss:BIintro}

We now turn to the definition and basic properties of Brownian interlacements. The readers are referred to Section 2 of \cite{SznBI} for a complete description of the definition of this model.

We first remind readers the definition of the path space $W$ at the beginning of Section \ref{bmpt}. We consider $W^{*}$ the set of equivalence classes of trajectories
in $W$ modulo time-shift, i.e.,
\begin{equation}
W^{*}=W/\sim,
\end{equation}
where $w\sim w'$, if $w(\cdot)=w'(\cdot+t)$ for some $t\in\mathbb{R}$. Without loss of preciseness we still refer to elements of $W^*$ as ``trajectories''. We denote by $\pi^*$ the canonical projection on $W^*$ and introduce
the $\sigma$-algebra 
\begin{equation}
{\cal W}^{*}=\{A\subset W^{*};(\pi^{*})^{-1}(A)\in{\cal W}\},
\end{equation}
which is the largest $\sigma$-algebra on $W^{*}$ such that $(W,{\cal W})\stackrel{\pi^{*}}{\longrightarrow}(W^{*},{\cal W}^{*})$
is measurable.

Given a compact subset $K$ of $\IR^{d}$, we write $W_{K}$ for the subset of trajectories
of $W$ that enter $K$, and $W_{K}^{*}$ for its image under $\pi^{*}$. We can now introduce the measurable map $\pi_K$ from $W_{K}^{*}$ into $W_{+}$
defined by 
\begin{equation}\label{eq:pikdef}
\pi_K:w^{*}\in W_{K}^{*}\mapsto \big(w(H_{K}+t)\big)_{t\ge0},
\end{equation}
for any $w\in W_{K}$ such that $\pi^{*}(w)=w^{*}$. 

We now introduce the canonical space for the Brownian interlacement
point process, namely the space of point measures on $W^{*}\times\IR^{+}$,
\begin{align}\label{eq:standardcons}
\Omega=\Big\{ & \widetilde{\omega}=\sum_{i\ge0}\delta_{(w_{i}^{*},\alpha_{i})},\;\mbox{with}\;(w_{i}^{*},\alpha_{i})\in W^{*}\times[0,\infty)\;\mbox{and}\;\widetilde{\omega}(W_{K}^{*}\times[0,\alpha])<\infty, \\ 
 & \mbox{for any compact subset \ensuremath{K} of \ensuremath{\mathbb{R}^{d}} and \ensuremath{\alpha\ge0\Big\}}}.\nonumber 
\end{align}

By Theorem 2.2 and (2.22) in \cite{SznBI}, there exists a unique $\sigma$-finite measure $\nu$ on $(W^{*},{\cal W}^{*})$
such that for each compact subset $K$ of $\IR^{d}$, 
\begin{equation}
\mbox{the image of \ensuremath{1_{W_{K}^{*}}\nu} under \ensuremath{w^{*}\mapsto \pi_K(w^{*})} equals \ensuremath{P_{e_{K}}}}.\label{mapping}
\end{equation}

We endow $\Omega$ with the $\sigma$-algebra ${\cal A}$ generated
by the evaluation maps $\widetilde{\omega}\mapsto\widetilde{\omega}(B)$, for $B\in{\cal W}^{*}\otimes{\cal B}(\mathbb{R}^{+})$,
and denote by $\mathbb{P}$ the law on $(\Omega,{\cal A})$ of the
Poisson point measure with intensity measure $\nu\times d\alpha$
on $W^{*}\times\mathbb{R}^{+}$.

When $\widetilde{\omega}\in\Omega$, $\alpha\ge0$, $r\ge0$, we define  \textit{Brownian interlacements at level $\alpha$ with
radius $r$} through the following formula
\begin{equation}\label{eq:standardcons2}
\begin{array}{l}
\mbox{\ensuremath{{\cal I}_{r}^{\alpha}(\widetilde{\omega})=\bigcup\limits _{i\ge0:\alpha_{i}\le\alpha}\;\;\bigcup\limits _{s\in\mathbb{R}}B(w_{i}(s),r)}, where \ensuremath{\widetilde{\omega}=\sum_{i\ge0}\delta_{(w_{i}^{*},\alpha_{i})}}}\\
\mbox{and \ensuremath{\pi^{*}(w_{i})=w_{i}^{*}} for \ensuremath{i\ge0}}.
\end{array}
\end{equation}
By definition, $\mathcal{I}^{\alpha}_r$ is almost surely a closed
subset of $\IR^{d}$. 

We easily see that for $\alpha\geq\alpha'\geq 0$ and $r\geq r' \geq 0$, under the measure $\mathbb{P}$, $\mathcal{I}^\alpha_r$ is monotonously increasing with respect to both $\alpha$ and $r$, i.e.,
\begin{equation}\label{eq:monotonecoupling}
\mathcal{I}^\alpha_r\supseteq \mathcal{I}^{\alpha'}_r\qquad\mbox{ and }\qquad\mathcal{I}^\alpha_r\supseteq \mathcal{I}^{\alpha}_{r'}.
\end{equation}
We also immediately see that  for all $\alpha_1,\ldots,\alpha_n\geq0$, $r\geq0$ and independent Brownian interlacements $\mathcal{I}_i\stackrel{\rm law}{=} \mathcal{I}^{\alpha_i}_r$,
\begin{equation}\label{eq:addcouple}
\bigcup_{i=1}^{n}\mathcal{I}_i \stackrel{\rm law}{=} \mathcal{I}^{\alpha}_r\mbox{    where $\alpha=\alpha_1+\cdots+\alpha_n$}.
\end{equation}

We now give a local picture of Brownian interlacements. In fact, given $K$, a compact subset of $\IR^{d}$
and $\alpha\ge0$, the function from $\Omega$ to the set of finite point measures on $W_{+}$ 
\begin{equation}\label{eq:mukadef}
\mu_{K,\alpha}(\widetilde{\omega})=\sum_{i\ge0}1_{\{\alpha_{i}\le\alpha,w_{i}^{*}\in W_{K}^{*}\}}\,\delta_{\pi_K(w_{i}^{*})},\;\mbox{when}\;\;\widetilde{\omega}=\sum_{i\ge0}\delta_{(w_{i}^{*},\alpha_{i})}\in\Omega,
\end{equation}
satisfies, by (\ref{mapping}), that
\begin{equation}\label{eq:local}
\mbox{$\mu_{K,\alpha}$ is a Poisson point process on $W_+$ with intensity measure $\alpha P_{e_K}$.}
\end{equation}
It follows from (\ref{eq:local}) that we can give a simple characterization of the law of $\mathcal{I}_r^{\alpha}$ (see (2.32) in \cite{SznBI}):  
for all compact $K\subset\mathbb{R}^d$, 
\begin{equation}\label{simplechar}
\mathbb{P}[\mathcal{I}_{r}^{\alpha}\cap K=\emptyset]=e^{-\alpha\cdot\mathrm{cap}(B(K,r))}.
\end{equation}
We call the complement
of ${\cal I}_{r}^{\alpha}$, 
the \textit{vacant set} of Brownian interlacements:
\begin{equation}
{\cal V}_{r}^{\alpha}(\widetilde{\omega})=\mathbb{R}^{d}\backslash{\cal I}_{r}^{\alpha}(\widetilde{\omega}),\;\mbox{for}\;\widetilde{\omega}\in\Omega,\,\alpha>0,\, r\geq0.
\end{equation}
Note that ${\cal V}_r^{\alpha}$ is almost surely an open subset of $\IR^{d}$. Thanks to this, showing whether ${\cal V}_r^{\alpha}$ percolates is equivalent to showing whether it contains a continuous path to infinity, see the paragraph below (\ref{latticeobservation}). See also the second-to-last paragraph of Section \ref{se:basic}.

Now we recall some useful properties of Brownian interlacements. For all $\alpha>0$, $r>0$, $y\in\mathbb{R}^d$, $\lambda>0$, under $\mathbb{P}$, we know that (see (2.33), (2.35) and (2.36) in \cite{SznBI})
\begin{align}
\mathcal{I}^\alpha_r+y&\mbox{ has the same law as } \mathcal{I}^\alpha_r \;\;\;\,& \qquad\qquad\mbox{(translation invariance);}\label{translationinv}\\
\lambda\mathcal{I}^\alpha_r&\mbox{ has the same law as }\mathcal{I}^{\lambda^{2-d}\alpha}_{\lambda r}\;\, &\qquad\qquad\mbox{(scaling)};\label{scaling}\\
\mathcal{I}^\alpha_0 &\mbox{ is a.s.~connected, when }d=3&\qquad\qquad\mbox{(connectedness),} \nonumber\\
\;\;\mbox{ and}&\mbox{ a.s.~disconnected, when }d\geq 4&\qquad\qquad\mbox{(disconnectedness);} \label{connectedd3}\\
\mathcal{I}^\alpha_r&\mbox{ is rotational invariant}\qquad\;\;\quad&\qquad\qquad\mbox{(rotational invariance).}
\end{align}

We also regard $\mathcal{I}^\alpha_r$ itself as a random closed set in the space $(\Sigma,\sigma_f,Q^\alpha_r)$ where $\Sigma$ stands for the set of closed (and possibly empty) subsets of $\mathbb{R}^d$, endowed with $\sigma$-algebra $\sigma_f$, which is generated by the sets $\{F\in\Sigma;\; F\cap K=\emptyset\}$, where $K$ varies over the compact subsets of $\mathbb{R}^d$ (see Section 2.1, p.27 in \cite{Matheron}) and $Q^\alpha_r$ stands for its law. See below (2.31), \cite{SznBI} for more details. 

We end this subsection by the ergodicity of Brownian interlacements. 

\begin{prop}Let $(t_x)_{x\in\mathbb{R}^d}$ stand for the translations in $\mathbb{R}^d$. For all $\alpha,r\geq 0$,
\begin{equation}\label{eq:ergodicity}
\mbox{$(t_x)_{x\in\mathbb{R}^d}$ is a measure preserving flow on $(\Sigma,\sigma_f,Q^\alpha_r)$ which is ergodic.}
\end{equation}
Moreover,
 \begin{equation}\label{eq:01perco} 
\mathbb{P}[\mathcal{V}^\alpha_r\mbox{ percolates}]\in\{0,1\}.
\end{equation}
\end{prop}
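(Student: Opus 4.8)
The plan is to establish (\ref{eq:ergodicity}) and then deduce (\ref{eq:01perco}) as a routine consequence of ergodicity applied to the translation-invariant event $\{\mathcal{V}^\alpha_r \textrm{ percolates}\}$. For the measure-preserving property, I would note that translation invariance of $\mathcal{I}^\alpha_r$ under $\mathbb{P}$, already recorded in (\ref{translationinv}), transfers directly to the pushforward law $Q^\alpha_r$ on $(\Sigma,\sigma_f)$; since for each $x$ the map $t_x$ on $\Sigma$ is the set-translation $F\mapsto F+x$, and $\mathcal{I}^\alpha_r + x \stackrel{\rm law}{=} \mathcal{I}^\alpha_r$, it follows that $(t_x)_* Q^\alpha_r = Q^\alpha_r$. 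Measurability of the flow in $(x,F)$ jointly is a standard check using that $\sigma_f$ is generated by the hitting events $\{F : F\cap K = \emptyset\}$ for compact $K$, and that $\{(x,F): (F+x)\cap K = \emptyset\} = \{(x,F): F \cap (K - x) = \emptyset\}$ depends continuously/measurably on $x$.

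The heart of the matter is ergodicity of $(t_x)_{x\in\mathbb{R}^d}$ on $(\Sigma,\sigma_f,Q^\alpha_r)$. The standard route is to prove the stronger \emph{mixing} property, which implies ergodicity: for bounded measurable $F,G$ on $\Sigma$ depending only on the configuration inside a bounded region, $\int F(\omega)\, G(t_x\omega)\, dQ^\alpha_r \to \int F\, dQ^\alpha_r \int G\, dQ^\alpha_r$ as $|x|\to\infty$. For this I would exploit the Poissonian structure: it suffices, by a monotone-class/density argument, to treat $F = 1_{\{\mathcal{I}^\alpha_r\cap K_1 = \emptyset\}}$ and $G = 1_{\{\mathcal{I}^\alpha_r\cap K_2 = \emptyset\}}$ for compact $K_1,K_2$, because such events generate $\sigma_f$ and their indicators (and products) are dense enough. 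Then $\int F\cdot (G\circ t_x)\, dQ^\alpha_r = \mathbb{P}[\mathcal{I}^\alpha_r \cap K_1 = \emptyset,\ \mathcal{I}^\alpha_r \cap (K_2 + x) = \emptyset] = \mathbb{P}[\mathcal{I}^\alpha_r \cap (K_1 \cup (K_2+x)) = \emptyset] = e^{-\alpha\,\mathrm{cap}(B(K_1 \cup (K_2+x),\, r))}$ by (\ref{simplechar}). Using subadditivity (\ref{eq:union}) together with the fact that, for $|x|$ large, $B(K_1,r)$ and $B(K_2+x,r)$ are disjoint and far apart, one shows $\mathrm{cap}(B(K_1\cup(K_2+x),r)) \to \mathrm{cap}(B(K_1,r)) + \mathrm{cap}(B(K_2,r))$ as $|x|\to\infty$; the converse inequality to subadditivity in the limit follows because the interaction term in the capacity is controlled by the Green function $g$ between the two sets, which decays like $|x|^{2-d}\to 0$ by (\ref{eq:greenvalue}), via the variational characterization (\ref{eq:capacityvariation}) or directly via (\ref{eq:hitting proba}). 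Hence the product factorizes in the limit, giving mixing, hence ergodicity.

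For (\ref{eq:01perco}), the event $\{\mathcal{V}^\alpha_r \textrm{ percolates}\} = \{F\in\Sigma : \mathbb{R}^d\setminus F \textrm{ has an unbounded component}\}$ is measurable with respect to $\sigma_f$ (it can be written in terms of countably many crossing events of boxes with rational corners, each of which is $\sigma_f$-measurable since connectivity within a bounded region through the closed set $F$ is determined by the family of hitting events) and is manifestly invariant under every $t_x$: translating $F$ translates its complement and its unbounded components. An ergodic measure-preserving flow assigns probability $0$ or $1$ to every invariant event, so $\mathbb{P}[\mathcal{V}^\alpha_r \textrm{ percolates}] = Q^\alpha_r(\{F : \mathbb{R}^d\setminus F \textrm{ percolates}\}) \in \{0,1\}$.

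The main obstacle is the mixing/ergodicity step, and within it the subtlety is not the Poissonian factorization itself but the measure-theoretic reduction: one must check carefully that the finite-dimensional events $\{F\cap K=\emptyset\}$ are a convergence-determining class for the asymptotic factorization (a monotone-class argument plus the fact that mixing is stable under $L^2$-approximation handles this), and that the percolation event is genuinely $\sigma_f$-measurable, which requires expressing ``unbounded component of the open set $\mathbb{R}^d\setminus F$'' through countably many compact-set hitting data — here one uses that $\mathcal{V}^\alpha_r$ is open so path-connectedness coincides with connectedness, as remarked after (\ref{simplechar}). Everything else is bookkeeping with the capacity estimates already available in Section \ref{notations}.
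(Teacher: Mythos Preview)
Your proposal is correct and proves the same mixing property as the paper, but via a different computation. The paper works one level up, at the interlacement point process: it fixes a compact $K$, considers arbitrary $[0,1]$-valued functionals $F$ of the finite point measure $\mu_{K,\alpha}$ on $W_+$, and shows $\mathbb{E}[F(\mu_{K,\alpha})\,F(\mu_{K,\alpha})\circ\tau_x]\to\mathbb{E}[F(\mu_{K,\alpha})]^2$ by invoking a quantitative covariance bound
\[
\big|\mathrm{cov}_{\mathbb{P}}\big(F(\mu_{K,\alpha}),G(\mu_{K+x,\alpha})\big)\big|\le c\,\alpha\,\frac{\mathrm{cap}(K)^2}{\mathrm{dist}(K,K+x)},
\]
imported from the random-interlacements argument in \cite{SznAnnal}. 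You instead stay at the level of the random closed set $\mathcal{I}^\alpha_r$, reduce by density to the generating events $\{\mathcal{I}^\alpha_r\cap K=\emptyset\}$, and use the explicit formula (\ref{simplechar}) together with asymptotic additivity of capacity for far-apart sets. Your route is more elementary and self-contained (no appeal to \cite{SznAnnal}, no trajectory-space machinery), while the paper's route yields a quantitative decay rate of correlations and actually establishes mixing for the full point process, not just for its image $Q^\alpha_r$ in $\Sigma$. Both are perfectly adequate for (\ref{eq:ergodicity}) and (\ref{eq:01perco}); your remarks on the $L^2$-density step and on $\sigma_f$-measurability of the percolation event are in fact more explicit than what the paper writes down.
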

%As the proof is very similar to the proof of ergodicity of random interlacements (see Theorem 2.1 in \cite{SznAnnal}), we postpone it to the appendix.

\begin{proof}
We start with (\ref{eq:ergodicity}). It follows from (\ref{translationinv}) that $(t_x)_{x\in\mathbb{R}^d}$ is a measure preserving flow on $(\Sigma,\sigma_f,Q^\alpha_r)$. Ergodicity immediately follows if we prove that %Brownian interlacements are (strongly) mixing, or more precisely, 
for any compact $K\in\mathbb{R}^d$, %and any $[0,1]$-valued $\sigma_{f,K}$-measurable function on $\Sigma$, where $\sigma_{f,K}\stackrel{\triangle}{=}\sigma_f\cap \mathcal{B}(K)$ and $\mathcal{B}(K)$ stands for the Borel subsets of $K$,
%\begin{equation}\label{eq:ergo1}
%\lim_{x\to\infty} E^{Q^\alpha_r}[f\cdot f\circ t_x]=E^{Q^\alpha_r}[f]^2.
%\end{equation}
%%This is because, 
%The indicator function of any $A\in\sigma_{f,K}$ invariant under $(t_x)_{x\in\mathbb{R}^d}$ can be approximated in $L^1(Q^\alpha_r)$ by functions $f$ as above. Hence (\ref{eq:ergo1}) implies that $Q^\alpha_r[A]=Q^\alpha_r[A]^2$. Therefore $Q^\alpha_r[A]\in\{0,1\}$, which shows that $(t_x)_{x\in\mathbb{R}^d}$ is ergodic, confirming (\ref{eq:ergodicity}). 
%
%Now we prove (\ref{eq:ergo1}). The claim will follow once we show that for any $K$ a compact subset of $\mathbb{R}^d$,
\begin{equation}
\label{eq:FFtau}\lim_{|x|\to \infty} \mathbb{E}[F(\mu_{K,\alpha}) \;F(\mu_{K,\alpha}) \circ \tau_x] = \mathbb{E}[F(\mu_{K,\alpha})]^2
\end{equation}
where $(\tau_x)_{x\in\mathbb{R}^d}$ stands for the translation on $\widetilde{\omega}\in\Omega$ by $-x$, i.e., if $\widetilde{\omega}=\sum_{i\geq0}\delta_{(w_i^*,\alpha_i)}\in\Omega$,
\begin{equation}
\tau_x \widetilde{\omega}=\sum_{i\geq0}\delta_{(w_i^*-x,\alpha_i)},\mbox{ for }x\in\mathbb{R}^d,
\end{equation}
for any $[0,1]$-valued measurable function $F$ on the set of finite point-measures on $W_+$, endowed with its canonical $\sigma$-algebra. By the translation invariance of Brownian interlacements, we can find $G$ (depending on $x$), with similar properties as $F$, such that the expectation in the left-hand side of (\ref{eq:FFtau}) equals $\mathbb{E}[F(\mu_{K,\alpha}) \,G(\mu_{K + x,\alpha})]$, while 
\begin{equation}\label{eq:GeF}
\mathbb{E}[G(\mu_{K + x,\alpha})]=\mathbb{E}[F(\mu_{K,\alpha})].
\end{equation}
By an argument similar to that between (2.11) and (2.15) in the proof of Theorem 2.1 in \cite{SznAnnal} we see that for $\alpha \ge 0$, $K$ compact, and $x\in\mathbb{R}^d$, $F,G$-measurable functions on the set of finite point measures on $W_+$ with values in $[0,1]$, when $|x|$ is sufficiently large (we assume ${\rm dist}(K,K+x)>0$),
\begin{equation}
\left|{\rm cov}_\mathbb{P} \big(F(\mu_{K,\alpha}), \,G(\mu_{K+x,\alpha})\big)\right| \le c\,\alpha \;\frac{{\rm cap}(K)^2}{{\rm dist}(K,K+x)}.
\end{equation}
This implies (\ref{eq:FFtau}) and thus concludes the proof of (\ref{eq:ergodicity}).

Since the event $\{\mathcal{V}^\alpha_r\mbox{ percolates}\}$ is translation invariant, (\ref{eq:01perco}) readily follows. 

\end{proof}

\subsection{Miscellaneous}
We start with some basic but useful facts on the renewal theory of Brownian motion. Let $(X_t)_{t\geq 0}$ be a $d$-dimensional Brownian motion. We define a sequence of stopping times $\tau^{N}$ inductively in the following way:
\begin{equation}
\tau^{1}=\inf\{s>0,|X_s-X_0|\geq1\}
\end{equation}
and when $N\geq1$
\begin{equation}
\tau^{N+1}=\tau^{1}\circ\theta_{\tau^{N}}+\tau^{N}.
\end{equation}
In other words, $\tau^{N}$ is the exit time after $\tau^{N-1}$ from
a ball of radius 1 centered at $X_{\tau^{N-1}}$.
For $t>0$, we write $N^{t}$ for the smallest integer $n$ such that $\tau^n$ is no less than $t$, i.e.,
\begin{equation}\label{Ntdef}
N^{t}=\min\{n\in\mathbb{N}; \;\tau^{n}\geq t\}.
\end{equation}
From standard renewal theory, see for example (3) in Section 4.1, p. 47 and (17) in Section 4.5, p. 58 of \cite{Renewal},  it is known that
\begin{equation}\label{NTm1}
E[N^{t}]\leq C't\quad\mbox{ and }\quad
{\rm Var}(N^{t})\leq C''t.
\end{equation}

We end this section by stating a generalized version of Borel-Cantelli lemma (see \cite{Borel} for more details). 
\begin{lem}\label{lem:Borel}
Consider a probability space $(\Omega,{\cal F},\mathbb{P})$
and a sequence of events $\Delta_{n}\in{\cal F}$. Let $\delta_{n}=1_{\Delta_{n}}$
be the indicator function of the event $\Delta_{n}$. If there exists
a sequence $b_{n}$ such that $\sum_{n}b_{n}=\infty$ and for any
$d_{i}\in\{0,1\}$, $i=1,\ldots,n-1,$
\begin{equation}
\mathbb{P}\left[\Delta_{n}\big|\delta_{1}=d_{1},\ldots,\delta_{n-1}=d_{n-1}\right]\geq b_{n}>0,
\end{equation}
then
\begin{equation}
\mathbb{P}[\limsup_{k}\Delta_{k}]=1.
\end{equation}

\end{lem}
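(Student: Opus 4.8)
The plan is to reduce the statement to showing that for every fixed $m\in\mathbb{N}$ one has $\mathbb{P}[\bigcap_{k\geq m}\Delta_k^{c}]=0$. Indeed, $(\limsup_k\Delta_k)^{c}=\bigcup_{m\in\mathbb{N}}\bigcap_{k\geq m}\Delta_k^{c}$ is then a countable union of $\mathbb{P}$-null sets, so $\mathbb{P}[\limsup_k\Delta_k]=1$ follows at once. So fix $m$, and for $n\geq m$ put $p_n=\mathbb{P}[\Delta_m^{c}\cap\cdots\cap\Delta_n^{c}]=\mathbb{P}[\delta_m=0,\ldots,\delta_n=0]$, together with the convention $p_{m-1}=1$.

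The key step is the recursion $p_n\leq(1-b_n)\,p_{n-1}$ for all $n\geq m$. To obtain it, partition the event $\{\delta_m=0,\ldots,\delta_{n-1}=0\}$ (which is all of $\Omega$ when $n=m$) into the atoms
$A_{\vec d}=\{\delta_1=d_1,\ldots,\delta_{m-1}=d_{m-1},\,\delta_m=0,\ldots,\delta_{n-1}=0\}$
indexed by $\vec d=(d_1,\ldots,d_{m-1})\in\{0,1\}^{m-1}$, and keep only those $\vec d$ for which $\mathbb{P}[A_{\vec d}]>0$. On each such atom the hypothesis, applied with tail coordinates $d_m=\cdots=d_{n-1}=0$, gives $\mathbb{P}[\Delta_n\mid A_{\vec d}]\geq b_n$, hence $\mathbb{P}[\Delta_n^{c}\cap A_{\vec d}]\leq(1-b_n)\,\mathbb{P}[A_{\vec d}]$ (trivially true if $1-b_n<0$, since the left side is nonnegative). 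Summing over the relevant $\vec d$ and noting $p_n=\mathbb{P}[\Delta_n^{c}\cap\{\delta_m=0,\ldots,\delta_{n-1}=0\}]$ yields $p_n\leq(1-b_n)\,p_{n-1}$.

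Iterating from $p_{m-1}=1$ gives $p_n\leq\prod_{k=m}^{n}(1-b_k)\leq\exp\bigl(-\sum_{k=m}^{n}b_k\bigr)$ by the elementary bound $1-x\leq e^{-x}$. Since $\sum_k b_k=\infty$, the right-hand side tends to $0$ as $n\to\infty$; as the events $\bigcap_{k=m}^{n}\Delta_k^{c}$ decrease to $\bigcap_{k\geq m}\Delta_k^{c}$, continuity of $\mathbb{P}$ from above gives $\mathbb{P}[\bigcap_{k\geq m}\Delta_k^{c}]=\lim_n p_n=0$, which completes the argument. There is no genuine obstacle here; the only point requiring a little care is the bookkeeping around conditioning events of probability zero, which is dealt with by restricting the partition to atoms of positive probability (the hypothesis is only ever invoked on such atoms). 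One could alternatively deduce the lemma from Lévy's conditional Borel–Cantelli lemma applied to the filtration $\sigma(\delta_1,\ldots,\delta_{n})$, but the direct argument above is self-contained.
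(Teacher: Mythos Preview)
Your proof is correct. Note that the paper does not actually supply a proof of this lemma: it merely states the result and refers the reader to \cite{Borel} (Nash, 1954). Your direct argument---partitioning over the atoms of $\sigma(\delta_1,\ldots,\delta_{n-1})$, extracting the factor $(1-b_n)$ on each atom of positive measure, and then iterating to get the product bound $\prod_{k=m}^n(1-b_k)\leq\exp(-\sum_{k=m}^n b_k)$---is the standard one and is essentially what one finds in Nash's original note. The care you take around null atoms is appropriate, and the alternative route via L\'evy's conditional Borel--Cantelli that you mention at the end would also work.
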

\section{Graph distance between trajectories of Brownian interlacements}\label{se:2}
This section is entirely dedicated to the proof of Theorem \ref{graphdistance} on the graph distance of trajectories of Brownian interlacements. The lower and upper bounds are proved separately in Proposition \ref{prop:graphlb} of Section \ref{ss:lbgd}, and Proposition \ref{prop:gdub} of Section \ref{ss:ubgd}.

Throughout this section, we pick a fixed 
\begin{equation}
\alpha>0
\end{equation}
except for the proof of Theorem \ref{graphdistance}. From now on in this section we omit the dependence of constants on $\alpha$ in notation.
\subsection{The lower bound}\label{ss:lbgd}

In this subsection we prove in Proposition \ref{prop:graphlb} that for all $r>0$, almost surely ${\rm diam}(G_{\alpha,r})\geq s_{d}$, which constitutes the lower bound in Theorem \ref{graphdistance}.

We start with notations. For $l>0$, we write $S_{l}(x,y)\subset W^{*}$ for the collection of all trajectories that  intersects
both $B(x,l)$ and $B(y,l)$. 
%Let 
%\begin{equation}\label{eq:mudef}
%\mu=\nu\times {\rm Leb}[0,\alpha]
%\end{equation}
% (see above (\ref{mapping}) for definition of $\nu$) where ${\rm Leb}[0,\alpha]$ stands for the Lebesgue measure on $[0,\alpha]$. It can be regarded as the intensity measure of Brownian interlacements at level $\alpha$ under an equivalent definition.
 In the next lemma we show that $\nu(S_{r}(x,y))$ (recall that $\nu$ was defined in (\ref{mapping})) decays (as $|x-y|\to\infty$) at least as fast as $c|x-y|^{2-d}$. 
\begin{lem}
Let $x,y\in\mathbb{R}^d$. One has %Remind that $\mu_\alpha$ stands for the interlacement measure at level $\alpha$. One has
\begin{equation}\label{muSub}
\nu(S_{r}(x,y))\leq C(r)\min(|x-y|^{2-d},1).
\end{equation}
\end{lem}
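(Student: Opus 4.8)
The plan is to bound $\mu(S_{2\rho}(x,y))$ by relating it, via the characterization \eqref{mapping} of the interlacement intensity measure, to hitting probabilities of Brownian motion started from the equilibrium measure of a ball. Concretely, recall that $S_{2\rho}(x,y)$ is the set of trajectories in $W^*$ hitting both $B(x,2\rho)$ and $B(y,2\rho)$; such a trajectory in particular hits $B(x,2\rho)$, so using \eqref{mapping} with $K = B(x,2\rho)$ (or, to be symmetric, with a large ball $K$ containing both), the $\nu$-mass of $W^*_K$-trajectories that additionally hit $B(y,2\rho)$ is controlled by $P_{e_K}$-probabilities of hitting $B(y,2\rho)$. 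The cleanest route: for a trajectory to lie in $S_{2\rho}(x,y)$ it must enter $B(x,2\rho)$ and separately enter $B(y,2\rho)$; decomposing at the first entrance to, say, $B(x,2\rho)$ and applying \eqref{mapping} with $K=B(x,2\rho)$, one gets
\begin{equation}\nonumber
\mu(S_{2\rho}(x,y)) \;\le\; \alpha\, E_{e_{B(x,2\rho)}}\!\big[H_{B(y,2\rho)}<\infty\big]\;+\;(\text{symmetric term, or a constant}),
\end{equation}
where the inequality, rather than equality, accounts for the fact that a trajectory may hit $B(y,2\rho)$ before $B(x,2\rho)$ (handled by the symmetric bound with $K=B(y,2\rho)$).

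Next I would evaluate $E_{e_{B(x,2\rho)}}[H_{B(y,2\rho)}<\infty]$. By \eqref{eq:hitting proba} this equals $\int g(z,w)\,e_{B(x,2\rho)}(dz)\,\ldots$; more directly, $P_z(H_{B(y,2\rho)}<\infty) = \int g(z,w)\,e_{B(y,2\rho)}(dw) \le c\,\mathrm{cap}(B(y,2\rho))\cdot |z-y'|^{2-d}$ for $z$ outside a neighborhood of $B(y,2\rho)$, using \eqref{eq:greenvalue} and the fact that $e_{B(y,2\rho)}$ is supported on that ball. Integrating against $e_{B(x,2\rho)}$, whose total mass is $\mathrm{cap}(B(x,2\rho)) = C(2\rho)^{d-2}$ by \eqref{eq:ballcap}, and noting that for $z$ in the support of $e_{B(x,2\rho)}$ one has $|z-y| \ge |x-y| - 2\rho \ge \tfrac12|x-y|$ once $|x-y|$ is large, yields
\begin{equation}\nonumber
\mu(S_{2\rho}(x,y)) \;\le\; C\,|x-y|^{2-d}
\end{equation}
for $|x-y|$ large. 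For bounded $|x-y|$ the quantity $\mu(S_{2\rho}(x,y))$ is at most $\mu(W^*_{B(x,2\rho+|x-y|)}) = \alpha\,\mathrm{cap}(B(x, 2\rho+|x-y|)) < \infty$, which is uniformly bounded when $|x-y|$ ranges over a compact set; combining the two regimes gives the bound $C\min(|x-y|^{2-d},1)$ (adjusting $C$).

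The main obstacle, and the only genuinely delicate point, is the bookkeeping in the first step: \eqref{mapping} describes the image of $1_{W^*_K}\nu$ under $w^*\mapsto w^{*,K,+}$, i.e. the forward part of the trajectory after its first entrance into $K$, but a trajectory in $W^*$ is doubly infinite and a priori could hit $B(y,2\rho)$ only on the part \emph{before} it enters $B(x,2\rho)$. One must therefore either symmetrize — bounding $\mu(S_{2\rho}(x,y))$ by the sum of the contributions from trajectories hitting $B(x,2\rho)$ first and those hitting $B(y,2\rho)$ first, each handled by \eqref{mapping} with the appropriate $K$ — or, more simply, take $K$ to be a ball containing $B(x,2\rho)$ so that entering $B(y,2\rho)$ after $H_K$ is automatic; but then one loses the good decay and must instead run the argument with $K=B(y,2\rho)$ and use that membership in $S_{2\rho}(x,y)$ forces hitting $B(y,2\rho)$, then the forward trajectory from $H_{B(y,2\rho)}$ need not return to $B(x,2\rho)$, so symmetrization is again needed. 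I expect the symmetrization argument to be short but it is where care is required; everything downstream is a routine Green-function and capacity computation using \eqref{eq:greenvalue}, \eqref{eq:hitting proba} and \eqref{eq:ballcap}.
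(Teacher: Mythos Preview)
Your proposal is correct and follows essentially the same route as the paper: symmetrize into ``enters $B(x,2\rho)$ first'' and ``enters $B(y,2\rho)$ first'', apply \eqref{mapping} with the respective ball as $K$ to reduce to $P_{e_{B_1}}[H_{B_2}<\infty]$ (and its symmetric counterpart), and then bound this via \eqref{eq:hitting proba}, \eqref{eq:greenvalue}, \eqref{eq:ballcap}, treating small $|x-y|$ by the trivial bound $\mu(S_{2\rho}(x,y))\le \alpha\,\mathrm{cap}(B(x,2\rho))$. The ``delicate point'' you flag is exactly the one the paper handles by the same symmetrization, so there is no gap.
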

From here onward we set
\begin{equation}\label{rchoiceBI}
\rho=\sqrt{d}/2+1,
\end{equation} and always apply this lemma with $r=2\rho$. Hence the constant from (\ref{muSub}) actually depends only on the dimension $d$.
\begin{proof}
Note that, if $w^*\in S_{r}(x,y)$, then it must either first pass through $B_1=B(x,r)$ and then pass through $B_2=B(y,r)$, or vice versa. By (\ref{mapping}), we obtain that
\begin{eqnarray*}%\right 
\nu(S_{2\rho}(x,y))& \leq & P_{ e_{B_1}}[H_{B_2}<\infty]+P_{ e_{B_2}}[H_{B_1}<\infty] \stackrel{\rm Symmetry}{=} 2 P_{e_{B_1}}[H_{B_2}<\infty]\\
 & \stackrel{(\ref{eq:hitproba})}{\leq} & \begin{cases}
C(r)& \textrm{ if }|x-y|<4r\\
2\int_{x\in \partial B_{1}}\int_{y\in \partial B_{2}}g(x,y)e_{B_{1}}(dx)e_{B_{2}}(dy) & \textrm{ otherwise.}
\end{cases}\\
 & \stackrel{(\ref{eq:ballcap})}{\leq}& \begin{cases}
C(r) & |x-y|<4r\\
C'(r) g(|x-y|-2r) & |x-y|\geq 4r
\end{cases}\\
 & \stackrel{(\ref{eq:greenvalue})}{\leq} & C''(r)\min(|x-y|^{2-d},1).
\end{eqnarray*}
This finishes the proof of (\ref{muSub}).
\end{proof}
Let $\widetilde\omega=\sum_{i\geq 0} \delta_{(w^*_i,\alpha_i)}$ be the interlacement process defined in Section \ref{ss:BIintro} and write $\omega=\sum_{i\geq 0, \alpha_i\leq \alpha}\delta_{w^*_i}$. By definition, $\omega$ has the law of a Poisson point process with intensity measure $\mu=\alpha\nu$. For $r>0$, we write
\begin{equation}
D_{r}(x,y)=\{\omega(S_{r}(x,y))\neq0\}
\end{equation}
for the event that there exists a trajectory in the intersection of the support of the interlacement process at level $\alpha$ and $S_{r}(x,y)$.
We then write 
\begin{equation}\label{Erho}
E_{r}=\{\mathrm{diam}(G_{\alpha,r})\leq s_d-1\}
\end{equation}
for the event that the diameter of $G_{\alpha,r}$ is no more than $s_d-1$.

In the next proposition we prove that, the probability that $x,y \in \mathcal{I}^\alpha_\rho$ when $E_\rho$ takes place, decays as $|x-y|$ tends to infinity. For convenience of argument we require that $x,y\in\mathbb{Z}^d$, which is sufficient for the proof by contradiction we will conduct later. %But the claim generally remains true for all $x$ and $y$ in $\mathbb{R}^d$.
\begin{prop}\label{prop:decay}
For all $x\neq y\in\mathbb{Z}^{d}$, one has
\begin{equation}\label{eq:probadecay}
\mathbb{P}[\{x,y\in\mathcal{I}_\rho^\alpha\}\cap E_{\rho}]\leq C|x-y|^{-1}.
\end{equation}
\end{prop}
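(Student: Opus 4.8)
The plan is to estimate the probability in \eqref{eq:probadecay} by exploiting the product structure of a Poisson point process together with the event $E_\rho$, which forces a short chain of trajectories connecting the sausages through $x$ and through $y$. Let $A_x$ (resp.~$A_y$) be the union of sausages of radius $\rho$ coming from trajectories of $\omega$ that pass within distance $\rho$ of $x$ (resp.~$y$); on $\{x,y\in\mathcal I^\alpha_\rho\}$ both are nonempty, and on $E_\rho$ the sausage through $x$ and the sausage through $y$ are at graph distance at most $s_d-1$. A direct approach is to bound, for each fixed pair, the probability that some trajectory visits a $\rho$-ball around $x$ and that moreover all of space is ``reachable'' in $s_d-1$ steps; but the cleanest route is to condition on the trajectory through $x$ and use independence to split off a factor coming from the event $D_\rho(x,y)$ being forced or, alternatively, to observe that $E_\rho$ already puts $y$ into a bounded-radius enlargement of $A_x$.

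First I would reduce to a statement about two ``root'' trajectories. On the event in question there is a trajectory $w_x$ in $\omega$ with $w_x\cap B(x,\rho)\neq\emptyset$ and a trajectory $w_y$ with $w_y\cap B(y,\rho)\neq\emptyset$; if $w_x=w_y$ this is the event $D_{\rho}(x,y)$ up to enlarging the radius, whose probability is $1-\exp(-\mu(S_{c\rho}(x,y)))\le \mu(S_{c\rho}(x,y))\le C|x-y|^{2-d}\le C|x-y|^{-1}$ for $d\ge3$ by \eqref{muSub}. If $w_x\neq w_y$, then $E_\rho$ forces a connecting chain of length at most $s_d-1$ between them, and in particular the sausage of $w_x$ and the sausage of $w_y$ lie in a common connected component of $\mathcal I^\alpha_\rho$ consisting of at most $s_d$ sausages, all of which must collectively bridge the (large) distance $|x-y|$. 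The key quantitative input is then a tail bound: for a single trajectory of $\omega$, the probability that its $\rho$-sausage reaches both a neighborhood of $x$ and a far-away region decays polynomially; iterating this $s_d$ times and summing over which trajectories form the chain should produce a bound of the form $C|x-y|^{-(d-2)}$ times combinatorial factors, which for $|x-y|$ large and $d\ge 3$ is at most $C|x-y|^{-1}$.

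Concretely, I would first handle the easy piece $\{x,y\in\mathcal I^\alpha_\rho\}\cap D_\rho(x,y)$ using \eqref{muSub} and $1-e^{-t}\le t$. For the complementary piece, I would condition on the Poisson process restricted to trajectories meeting $B(x,\rho)$; by the restriction property this restricted process is independent of the rest, its intensity is $\alpha P_{e_{B(x,\rho)}}$ by \eqref{mapping}, and the union of its sausages is an a.s.~bounded-looking but unbounded-in-principle set $A_x$ whose ``reach'' has exponential tails. Then, crucially, on $E_\rho\setminus D_\rho(x,y)$ the point $y$ must be hit by the union of sausages within graph distance $s_d-1$ of $A_x$; using the strong Markov property at successive hitting times and the Green-function decay \eqref{eq:greenvalue}, each ``transfer'' contributes a factor that is integrable against the intensity measure, so that the probability that the chain reaches distance $|x-y|$ is $O(|x-y|^{2-d})$. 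Since $d\ge 3$ gives $2-d\le -1$, this yields \eqref{eq:probadecay}.

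The main obstacle, I expect, is controlling the ``multi-step reach'': a naive union bound over all tuples of $s_d$ trajectories that could form a connecting chain diverges, because there are infinitely many such trajectories. The resolution should be to organize the estimate around a telescoping sum over dyadic annuli between $x$ and $y$ — at each scale the chain must cross the annulus, and crossing an annulus of radius $\sim 2^k$ has probability $\lesssim \mu(S_{c\rho}(\cdot,\cdot))$-type cost summed over positions, which telescopes — or, more simply, to use a first-moment bound: $\mathbb E[\#\{\text{trajectories of }\omega\text{ in }S_{2\rho}(x,z)\}]=\mu(S_{2\rho}(x,z))$, so summing \eqref{muSub} over $z$ in successive neighborhoods of $A_x$ and over the at most $s_d$ steps gives a convergent bound dominated by the single term $\mu(S_{2\rho}(x,y))\le C|x-y|^{2-d}$. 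Making this first-moment-over-chains argument rigorous, while keeping the independence between the root-trajectory picture at $x$ and the continuation toward $y$, is the technical heart; once it is in place the bound $C|x-y|^{-1}$ for $d\ge3$ (and in fact the stronger $C|x-y|^{2-d}$) follows immediately.
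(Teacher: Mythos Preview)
Your sketch has the right starting picture---on the event in question there is a chain of at most $s_d$ trajectories linking a neighborhood of $x$ to a neighborhood of $y$---but the quantitative conclusion you draw from it is wrong, and the technical steps that would make it rigorous are missing. You claim repeatedly that the chain argument yields a bound of order $|x-y|^{2-d}$; it does not. If the chain consists of $n+1$ trajectories, the intermediate meeting points $z_1,\dots,z_n$ must be summed over, and the resulting $(n+1)$-fold convolution of kernels behaving like $|\cdot|^{2-d}$ gives $|x-y|^{2(n+1)-d}$, not $|x-y|^{2-d}$. With the maximal $n+1=s_d$ one obtains $|x-y|^{2s_d-d}\le |x-y|^{-1}$, which is exactly the bound in the statement and is \emph{sharp} in odd dimensions. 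Your assertion that ``each transfer contributes a factor that is integrable against the intensity measure'' so that the decay rate stays at $|x-y|^{2-d}$ is the error: each convolution step costs a factor $|x-y|^2$ in the decay.

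Beyond this, you acknowledge but do not resolve the ``main obstacle''. The paper's resolution has three ingredients you are missing: (i) discretize the intermediate meeting points $\zeta_i\in\mathbb{R}^d$ to lattice points $z_i\in\mathbb{Z}^d$ at the price of doubling the radius, turning an uncountable union into a countable sum; (ii) apply the Slivnyak--Mecke theorem to the Poisson process $\omega$ to factorize $\mathbb{E}\big[\sum^*\prod_i 1_{w_i\in S_{2\rho}(z_i,z_{i+1})}\big]$ into $\prod_i \mu(S_{2\rho}(z_i,z_{i+1}))$, where the sum $\sum^*$ runs over tuples of \emph{distinct} points of $\omega$; (iii) control the resulting lattice convolution $\sum_{z_1,\dots,z_n}\prod_i \min(1,|z_i-z_{i+1}|^{2-d})$ by the estimate~\eqref{eq:convolution}, which is precisely where the exponent $2(n+1)-d$ appears and where the restriction $n<s_d$ is needed for finiteness. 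Neither the dyadic-annulus idea nor the vague ``first-moment-over-chains'' you propose substitutes for these steps.
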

\begin{proof}

On $\{x,y\in\mathcal{I}_\rho^\alpha\}\cap E_{\rho}$,
there exists $n\in\{1,\ldots,s_{d}-1\}$ with $\zeta_{0}=x$, $\zeta_{n+1}=y$
and $\zeta_{i}\in\mathbb{R}^{d}$ such that $D_{\rho}(\zeta_{i},\zeta_{i+1})$
happens for all $i=0,\ldots, n$ on different
trajectories in ${\rm Supp}(\omega)$, the support of $\omega$.  By (\ref{latticeobservation}), in this case there also exist $z_{0}=x$, $z_{n+1}=y$, $z_{i}\in\mathbb{Z}^{d}$, $i=1,\ldots,n$ with $z_i\neq z_{i+1}$, $i=0,\ldots,n$ and 
such that $D_{2\rho}(z_{i},z_{i+1})$, $i=0,\ldots,n$ happens on different
trajectories in ${\rm Supp}(\omega)$. We denote this event by $F_{z_1,\cdots,z_{n}}$ and by $\stackrel{*}{\sum}_n$ the sum over all $(n+1)$-tuples of pairwise different trajectories
$w^*_{0},\ldots,w^*_{n}\in{\rm Supp}(\omega)$. 
By definition of $F_{z_1,\cdots,z_{n}}$, we have 
\begin{equation}
\mathbb{P}[F_{z_1,\cdots,z_{n}}] \leq \mathbb{E}\Big[{\sum^{*}}_n\prod_{i=0}^{n}1_{w^*_{i}\in S_{2\rho}(z_{i},z_{i+1})}\Big]
\end{equation}
Writing $\#$ as a shorthand for $\{z_{1},\ldots z_{n}\in\mathbb{Z}^{d},z_{i}\neq z_{i+1}\mbox{ for }i=0,\ldots,n\}$, we have
\begin{eqnarray}
 \mathbb{P}\big[\{x,y\}\in\mathcal{I}_\rho^\alpha\}\cap E_{\rho}\big]& \leq & \sum_{n=1}^{s_{d}-1}\sum_{\#}\mathbb{P}[F_{z_1,\cdots,z_{n}}]\nonumber\\
 & \leq & \sum_{n=1}^{s_{d}-1}\sum_{\#}\mathbb{E}\Big[{\sum^{*}}_n\prod_{i=0}^{n}1_{w^*_{i}\in S_{2\rho}(z_{i},z_{i+1})}\Big]\\
 & \stackrel{(*)}{=} & \sum_{n=0}^{s_{d}-1}\sum_{\#}\prod_{i=0}^{n}\mathbb{E}\Big[\mu\big(S_{2\rho}(z_{i},z_{i+1})\big)\Big]\stackrel{\triangle}{=}{\rm I},\nonumber
\end{eqnarray}
where we obtain $(*)$ from Slivnyak-Mecke theorem, see Chapter 13.1, especially Proposition 13.1.VII, in \cite{DaVe}, see also the last paragraph of the proof of Lemma 3.1 in \cite{GDRI}. Moreover, by  (1.38) of Proposition 1.7 in \cite{Hara},
\begin{equation}\label{eq:convolution}
\sum_{\#}\prod_{i=0}^{n}\min\Big(1,|z_i-z_{i+1}|^{2-d}\Big)\left\{\begin{array}{ll} \leq C(n)|x-y|^{2n+2-d} &\mbox{ if $n<s_d$,}\\ =\infty &\mbox{ otherwise.}\end{array}\right.
\end{equation}
We hence obtain that
\begin{equation}
\begin{split}
{\rm I}\;\stackrel{(\ref{muSub})}{\leq}& \sum_{n=1}^{s_d-1}\sum_{\#}\prod_{i=0}^{n}c\min\Big(1,|z_i-z_{i+1}|^{2-d}\Big)\\
\stackrel{(\ref{eq:convolution})}{\leq}&\sum_{n=1}^{s_d-1} C(n)|x-y|^{2n+2-d} \leq C|x-y|^{2s_{d}-d}\stackrel{(\ref{eq:sddef})}{\leq} C|x-y|^{-1}.
\end{split}
\end{equation}
This ends the proof of (\ref{eq:probadecay}). %[MOVE UP!]
\end{proof}
Now we rephrase and prove the main claim of this subsection.

\begin{prop}\label{prop:graphlb}
For $d\geq 3$, $\alpha>0$ and $r>0$ one has
\begin{equation}\label{graphlb}
\mathbb{P}[{\rm diam}(G_{\alpha,r})\geq s_{d}]=1.
\end{equation}
\end{prop}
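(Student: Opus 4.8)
The strategy is a proof by contradiction combined with an ergodicity/density argument. Suppose that $\mathbb{P}[\mathrm{diam}(G_{\alpha,r})\le s_d-1]>0$. First I would reduce to the case $r=\rho$: since $\mathrm{diam}(G_{\alpha,r})$ is monotone in $r$ (a larger radius only adds edges, never removes them; see \eqref{eq:monotonecoupling} and the definition of $G_{\alpha,r}$), if the event $E_r=\{\mathrm{diam}(G_{\alpha,r})\le s_d-1\}$ has positive probability for some $r>0$, it has positive probability for $r=\rho$ as well after possibly passing to $r\wedge\rho$ — or, more cleanly, one proves the statement for $\rho$ and transfers it to general $r$ by scaling \eqref{scaling} together with translation invariance. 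So it suffices to derive a contradiction from $\mathbb{P}[E_\rho]>0$.

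The heart of the argument is Proposition \ref{prop:decay}: on $E_\rho$, whenever two lattice points $x,y\in\mathbb{Z}^d$ both lie in $\mathcal{I}^\alpha_\rho$, the probability of this joint event decays like $C|x-y|^{-1}$, which tends to $0$ as $|x-y|\to\infty$. I would then combine this with a second-moment / density consideration. By translation invariance \eqref{translationinv}, $p\stackrel{\triangle}{=}\mathbb{P}[x\in\mathcal{I}^\alpha_\rho]$ is a positive constant independent of $x$; indeed by \eqref{simplechar}, $\mathbb{P}[x\notin\mathcal{I}^\alpha_\rho]=e^{-\alpha\,\mathrm{cap}(B(0,\rho))}<1$. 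On the event $E_\rho\cap\{0\in\mathcal{I}^\alpha_\rho\}$ (which has positive probability, since $E_\rho$ forces enough connectivity that one may as well intersect it with a positive-probability local event, or more simply argue that $\mathbb{P}[E_\rho]>0$ and $\{0\in\mathcal I^\alpha_\rho\}$ are positively correlated via FKG-type monotonicity — both are increasing events in the interlacement point measure), consider the random set $\mathcal{I}^\alpha_\rho\cap\mathbb{Z}^d$. The idea is that $E_\rho$ says the whole interlacement set is "$s_d-1$-connected", so in particular $0$ and every other point of $\mathcal{I}^\alpha_\rho$ are linked by at most $s_d-1$ transfers; applying \eqref{eq:probadecay} with $x=0$ and letting $y$ range over a large box $B_\infty(0,N)\cap\mathbb{Z}^d$ gives
\[
\mathbb{E}\big[\#\{y\in B_\infty(0,N)\cap\mathbb{Z}^d: 0,y\in\mathcal{I}^\alpha_\rho\};\,E_\rho\big]\le \sum_{y\in B_\infty(0,N)\cap\mathbb{Z}^d}C|y|^{-1}\le C'N^{d-1}.
\]
On the other hand, on $E_\rho\cap\{0\in\mathcal I^\alpha_\rho\}$ the count $\#\{y\in B_\infty(0,N)\cap\mathbb{Z}^d:y\in\mathcal{I}^\alpha_\rho\}$ has expectation $\ge c\,p\,N^d$ for $N$ large (again by translation invariance and the positive correlation of $\{y\in\mathcal I^\alpha_\rho\}$ with the positive-probability event $E_\rho\cap\{0\in\mathcal I^\alpha_\rho\}$). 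Dividing by $N^d$ and sending $N\to\infty$ yields $0\ge c\,p\,\mathbb{P}[E_\rho\cap\{0\in\mathcal I^\alpha_\rho\}]>0$, a contradiction. Hence $\mathbb{P}[E_\rho]=0$, i.e.\ $\mathrm{diam}(G_{\alpha,\rho})\ge s_d$ a.s., and the same for all $r>0$ by the reduction above.

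I expect the main obstacle to be making the "density" step fully rigorous — specifically, justifying that on $E_\rho$ the pair $\{0,y\in\mathcal I^\alpha_\rho\}$ really is covered by Proposition \ref{prop:decay}, and that $\mathbb{P}[E_\rho\cap\{0\in\mathcal{I}^\alpha_\rho\}]>0$. The cleanest route is to observe that both $E_\rho$ (a statement about connectivity of the interlacement set) and $\{y\in\mathcal I^\alpha_\rho\}$ are increasing in the Poisson cloud $\omega$, so that the Harris/FKG inequality for Poisson processes gives $\mathbb{P}[E_\rho\cap\{0\in\mathcal I^\alpha_\rho\}]\ge\mathbb{P}[E_\rho]\,p>0$ and $\mathbb{P}[\{y\in\mathcal I^\alpha_\rho\}\cap E_\rho\cap\{0\in\mathcal I^\alpha_\rho\}]\ge p\cdot\mathbb{P}[E_\rho\cap\{0\in\mathcal I^\alpha_\rho\}]$. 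A minor point is that on $E_\rho$ the connecting chain of at most $s_d-1$ sausages between $0$ and $y$ passes through distinct trajectories, which is exactly the combinatorial input used to derive \eqref{eq:probadecay} via Slivnyak–Mecke and the convolution bound \eqref{eq:convolution}; one must take care that the bound $2s_d\ge d-2$ (equivalently $2s_d-d\ge -1$, with strict decay when $2s_d-d=-1$ and still decay to $0$ when $d$ is even and $2s_d-d=0$... ) — here one uses $s_d=\lceil(d-2)/2\rceil$ so that $2s_d-d\in\{-1,0\}$; in the borderline even case $|x-y|^{2s_d-d}=|x-y|^0$ does not decay, so one should note that \eqref{eq:probadecay} as stated gives the $|x-y|^{-1}$ bound precisely because the sum in \eqref{eq:convolution} with $n=s_d-1<s_d$ indices produces the exponent $2(s_d-1)+2-d=2s_d-d$, and summing over $n\le s_d-1$ the dominant term is $n=s_d-1$; when $d$ is even this exponent is $0$ and the argument needs the sharper observation that the relevant sum actually involves a last step that is genuinely summable — which is handled in the excerpt by restricting to $n\le s_d-1$ strictly less than $s_d$, so \eqref{eq:convolution} applies and gives a finite bound $C|x-y|^{2s_d-d}$, and one then uses $2s_d-d\le -1$... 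I would double-check this exponent bookkeeping, as it is the one place where the precise value of $s_d$ is essential and an off-by-one error would break the density contradiction.
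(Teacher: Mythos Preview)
Your overall strategy---assume $\mathbb{P}[E_\rho]>0$ and derive a contradiction from Proposition~\ref{prop:decay}---matches the paper's. However, your execution has a genuine gap: the event $E_\rho=\{\mathrm{diam}(G_{\alpha,\rho})\le s_d-1\}$ is \emph{not} increasing in the Poisson cloud $\omega$. Adding a trajectory to $\omega$ adds a vertex to $G_{\alpha,\rho}$; if that new sausage intersects few of the existing ones, the diameter can \emph{increase}. Hence the Harris/FKG step you invoke (to get $\mathbb{P}[\{y\in\mathcal{I}^\alpha_\rho\}\cap E_\rho\cap\{0\in\mathcal{I}^\alpha_\rho\}]\ge p\cdot\mathbb{P}[E_\rho\cap\{0\in\mathcal{I}^\alpha_\rho\}]$, and likewise $\mathbb{P}[E_\rho\cap\{0\in\mathcal{I}^\alpha_\rho\}]\ge p\,\mathbb{P}[E_\rho]$) is not justified, and without it your density lower bound collapses.

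The paper sidesteps this entirely with a two-ball pigeonhole argument in place of FKG. Assuming $\mathbb{P}[E_1]\ge\delta$, pick $R$ so large that $\mathbb{P}[\mathcal{I}_1^\alpha\cap B(0,R)=\emptyset]\le\delta/3$ (possible by \eqref{simplechar}). By translation invariance the same holds around any $x$, so by a union bound
\[
\mathbb{P}\big[\{\mathcal{I}_1^\alpha\cap B(0,R)\ne\emptyset\}\cap\{\mathcal{I}_1^\alpha\cap B(x,R)\ne\emptyset\}\cap E_1\big]\ \ge\ \delta-2\cdot\tfrac{\delta}{3}=\tfrac{\delta}{3}
\]
uniformly in $x$. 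Covering each ball by $O(R^d)$ lattice points (via \eqref{latticeobservation}) and using $E_1\subset E_\rho$, this event is contained in a union of $O(R^{2d})$ events of the form $\{y,z\in\mathcal{I}^\alpha_\rho\}\cap E_\rho$ with $|y-z|\asymp|x|$; Proposition~\ref{prop:decay} then bounds the probability by $CR^{2d}|x|^{-1}\to0$, the desired contradiction. No monotonicity of $E_\rho$ is used anywhere. (An alternative repair of your argument would be to note that $E_\rho$ is shift-invariant and invoke ergodicity of the interlacement point process to get $\mathbb{P}[E_\rho]\in\{0,1\}$, after which FKG on the two genuinely increasing events $\{0\in\mathcal{I}^\alpha_\rho\}$ and $\{y\in\mathcal{I}^\alpha_\rho\}$ suffices; but the paper's route is more elementary.)

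Separately, your exponent worry is a miscalculation: since $s_d=\lceil(d-2)/2\rceil$, one has $2s_d-d=-2$ when $d$ is even and $2s_d-d=-1$ when $d$ is odd, so the bound $|x-y|^{2s_d-d}\le|x-y|^{-1}$ in \eqref{eq:probadecay} always gives genuine decay. The borderline case $2s_d-d=0$ never occurs.
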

\begin{proof}
It is a simple fact that ${\rm diam}(G_{\alpha,r})\geq1$ because $G_{\alpha,1}$ has more than one vertex. When $d=3,4$, $s_{d}=1$, hence the claim (\ref{graphlb}) follows directly. Therefore it suffices to prove (\ref{graphlb}) for $d\geq5$. 

Thanks to the scaling property of Brownian interlacements (see (\ref{scaling})),
we can assume without loss of generality that $r=1$.

We assume by contradiction that for some $\delta>0$ (recall the definition of $E_1$ in (\ref{Erho})),
\begin{equation}\label{contradictBI}
\mathbb{P}[E_{1}]\geq\delta.
\end{equation}

On the one hand, by (\ref{simplechar}), we can take large $R$ such that
\begin{equation}\label{pickbigball}
\mathbb{P}\big[\mathcal{I}_{1}^{\alpha}\cap B(0,R)\neq\emptyset\big]\geq1-\delta/3.
\end{equation}
By the translation invariance of Brownian interlacements, (\ref{pickbigball}) also holds if one replaces $0$ by any $x\in\mathbb{R}$, hence with the assumption (\ref{contradictBI}), we obtain that uniformly for any $x\in\mathbb{R}^d$,
\begin{equation}\label{contra}
\mathbb{P}\Big[\big\{\mathcal{I}_{1}^{\alpha}\cap B(0,R)\neq\emptyset\big\}\cap\big\{\mathcal{I}_{1}^{\alpha}\cap B(x,R)\neq\emptyset \big\}\cap E_{1}^{}\Big]\geq\delta/3.
\end{equation}

On the other hand, we now show that
\begin{equation}\label{contraBI2}
\lim_{|x|\to\infty}\mathbb{P}\Big[\big\{\mathcal{I}_{1}^{\alpha}\cap B(0,R)\neq\emptyset\big\}\cap\big\{\mathcal{I}_{1}^{\alpha}\cap B(x,R)\neq\emptyset \big\}\cap E_{1}^{}\Big]=0.
\end{equation}
By (\ref{latticeobservation}), we know that with our choice of $\rho$ (see (\ref{rchoiceBI})), %the interlacements at level $\alpha$ with radius $1$ intersects with $B({0,x},R)$ implies 
\begin{equation}
\big\{\mathcal{I}_{1}^{\alpha}\cap B(0,R)\neq\emptyset\big\}\bigcap\big\{\mathcal{I}_{1}^{\alpha}\cap B(x,R)\neq\emptyset \big\}\subset\bigcup_{y\in B(0,R)\cap\mathbb{Z}^{d}}\bigcup_{z\in B(x,R)\cap\mathbb{Z}^{d}}\big\{y,z \in {\cal I}_{\rho}^{\alpha}\big\}.
\end{equation}
Hence, by (\ref{eq:probadecay}) and the fact that $E_1 \subseteq E_\rho$, we obtain that
\begin{equation}\label{contraBIlb}
\begin{split}
&\quad\mathbb{P}\Big[\big\{\mathcal{I}_{1}^{\alpha}\cap B(R)\neq\emptyset\big\}\cap\big\{\mathcal{I}_{1}^{\alpha}\cap B(x,R)\neq\emptyset \big\}\cap E_{1}^{}\Big]\\
\leq&\sum_{y\in B(0,R)\cap\mathbb{Z}^{d}}\sum_{z\in B(x,R)\cap\mathbb{Z}^{d}}\mathbb{P}\Big[\big\{y,z \in {\cal I}_{\rho}^{\alpha}\big\}\cap E_{\rho}\Big]\stackrel{(\ref{eq:probadecay})}{\leq} CR^{2d}|x|^{-1}.\end{split}
\end{equation}

The right-most term in (\ref{contraBIlb}) converges to $0$ as $x\to\infty$. This implies (\ref{contraBI2}), which creates a contradiction with (\ref{contra}), concluding the proof of (\ref{graphlb}).
\end{proof}

\subsection{Some preparatory capacity estimates for the upper bound}
This subsection is dedicated to some preliminary results 
for the proof of the upper bound in Theorem \ref{graphdistance}. 
The central result in this subsection is Proposition \ref{annuluscaplb}. 
As noted in Proposition \ref{34notneeded}, the cases $d=3$ and $d=4$ are classical results, hence throughout this subsection we will always assume  $d\geq 5$.

We start with the following basic property of Brownian motion which follows from integrating (\ref{transition})  the transition density of Brownian motion.
\begin{lem}\label{lem:exitproba}
There exists $c_{0}\in\mathbb{R}^{+}$, such that for all $R>0$ 
\begin{equation}\label{eq:exitproba}
P_{0}[T_{B^\circ(0,R/2)}>c_{0}R^{2}]\geq0.99.
\end{equation}
\end{lem}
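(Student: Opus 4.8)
The plan is to reduce to the case $R=1$ by Brownian scaling, and then to invoke the almost-sure strict positivity of the exit time of a ball by a Brownian motion started at its center.

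First I would recall that if $(X_t)_{t\ge0}$ is a $d$-dimensional Brownian motion under $P_0$, then for every $\lambda>0$ the rescaled process $(\lambda^{-1}X_{\lambda^2 t})_{t\ge0}$ is again a Brownian motion started at $0$. Tracking the exit time of a Euclidean ball through this rescaling (with $\lambda=R/2$) gives the distributional identity
\begin{equation}
T_{B(0,R/2)}\ \stackrel{\mathrm{law}}{=}\ (R/2)^2\,T_{B(0,1)}\qquad\text{under }P_0 ,
\end{equation}
so that $P_0[T_{B(0,R/2)}>c_0R^2]=P_0[T_{B(0,1)}>4c_0]$, an expression no longer depending on $R$.

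Next I would observe that, since Brownian paths are continuous and $X_0=0$ lies in the open ball $B^\circ(0,1)$, one has $|X_s|<1$ for all sufficiently small $s>0$, almost surely; hence $T_{B(0,1)}>0$ $P_0$-a.s. By continuity of the measure along the events $\{T_{B(0,1)}>\varepsilon\}$, which increase to a set of full measure as $\varepsilon\downarrow 0$, there exists $\varepsilon_0>0$ with $P_0[T_{B(0,1)}>\varepsilon_0]\ge 0.99$. Setting $c_0=\varepsilon_0/4$ then yields \eqref{eq:exitproba} for every $R>0$.

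There is essentially no serious obstacle here; the only point deserving a word is the strict positivity of $T_{B(0,1)}$, which is immediate from path continuity (alternatively, one could quote an explicit bound or Laplace transform for the exit time of a ball, the scaling reduction making any such input uniform in $R$).
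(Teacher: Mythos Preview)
Your proof is correct and is the natural argument. The paper does not actually supply a proof of this lemma; it merely states that the result ``follows from Donsker invariance'' (presumably meaning the scale invariance of Brownian motion, which is exactly what you use). Your reduction by scaling to a single radius, followed by the almost-sure positivity of the exit time and continuity of measure, is precisely the intended elementary justification.
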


%Now we introduce the ``random cactus'', an object of interest in this paper.
Now we define the central object of this subsection.
\begin{definition}\label{df:cactus}
 Let $R>1$ be a positive real number. Let $w_1,\ldots,w_N$
be a series of $N$ trajectories in $W_+$, with $w_i(0)=x_{i}\in\mathbb{R}^{d}$.
We denote by $\overline{W}_{N}=(w_1,\ldots,w_N)$ the collection of these trajectories. We denote by
\begin{equation}\label{eq:cactusdef}
\Phi(\overline{W}_{N},R)=\cup_{i=1}^{N}B\Big(w_i\big([0,T_{B^\circ(x_{i},R/2)}\wedge c_{0}R^{2}],1\big)\Big)
\end{equation}
the union of the sausages of these trajectories stopped at the
smaller of $c_{0}R^{2}$ and the exiting time of the ball of radius
$R/2$ centered at the respective starting point. %We call it an $(N,R)$-cactus. 
\end{definition}
 Let $X^{(1)},\ldots,X^{(N)}$
be $N$ independent Brownian motions, with $X^{(i)}_0=x_i$. Let $\overline{X}_N=(X^{(1)},\ldots,X^{(N)})$. In the rest of this section we are going to study the capacity
of $\Phi(\overline{X}_{N},R)$. 

We start with the upper bounds on its first
and second moments.
\begin{lem}
\label{lem:Ecactusub}%Let $R>1$ and $N\in\mathbb{Z}^{+}$. Let $X^{(1)},\ldots,X^{(N)}$
%be $N$ independent Brownian motions, with $X^i_0=x_i$.
%We write $\overline{W}_{N}=(W_{1},\ldots,W_{N})$, and 
Let $\overline{X}_N$ be defined as above. We denote its joint law by $E$. One has 
\begin{equation}
E[\mathrm{cap}(\Phi(\overline{X}_{N},R))]\leq CNR^{2},\label{eq:detercacub}
\end{equation}
and
\begin{equation}
E[\mathrm{cap}(\Phi(\overline{X}_{N},R))^{2}]\leq CN^{2}R^{4}.\label{eq:detercalc2ndub}
\end{equation}
\end{lem}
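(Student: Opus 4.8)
The plan is to bound the capacity of $\Phi(\overline{X}_N, R)$ by the sum of the capacities of the individual sausages, and then control each such sausage uniformly. Since $\Phi(\overline{X}_N,R) = \bigcup_{i=1}^N B(X^{(i)}([0,T_i\wedge c_0R^2]),1)$ where $T_i = T_{B(x_i,R/2)}$, subadditivity of capacity (see (\ref{eq:union})) gives
\begin{equation}
\mathrm{cap}(\Phi(\overline{X}_N,R)) \leq \sum_{i=1}^N \mathrm{cap}\big(B(X^{(i)}([0,T_i\wedge c_0R^2]),1)\big).
\end{equation}
Each summand is a Wiener sausage of radius $1$ run for time at most $c_0R^2$. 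The key deterministic input is a bound of the form $\mathrm{cap}(B(X([0,t]),1)) \leq C(1+t)$, valid almost surely, uniformly in the starting point: indeed the sausage $B(X([0,t]),1)$ is covered by $B(x_i, R/2 + 1)$ but more usefully one can cover it by $O(1+t)$ unit balls along a renewal sequence, or simply invoke the classical fact that $E[\mathrm{cap}(B(X([0,t]),1))] \leq C(1+t)$ together with a pathwise bound. Actually the cleanest route, consistent with the renewal machinery set up in Section \ref{notations}, is: by (\ref{eq:union}) and (\ref{eq:ballcap}), $\mathrm{cap}(B(X([0,t]),1)) \leq \mathrm{cap}(B(X_0,1)) \cdot (\text{number of unit-scale pieces})$; using the stopping times $\tau^N$ from (\ref{Ntdef}), the sausage up to time $t$ is contained in the union over $n \leq N^t$ of balls $B(X_{\tau^{n-1}}, 3)$, so $\mathrm{cap}(B(X([0,t]),1)) \leq C N^t$, and then by (\ref{NTm1}), $E[N^t] \leq C't$ and $\mathrm{Var}(N^t)\leq C''t$.

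For the first moment (\ref{eq:detercacub}): taking expectations and bounding $T_i\wedge c_0R^2 \leq c_0R^2$, each term satisfies $E[\mathrm{cap}(B(X^{(i)}([0,c_0R^2]),1))] \leq C\,E[N^{c_0R^2}] \leq C' R^2$ (using $R>1$ so that the additive constant is absorbed), hence the sum over $i=1,\ldots,N$ is at most $CNR^2$.

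For the second moment (\ref{eq:detercalc2ndub}): square the subadditivity bound and expand,
\begin{equation}
E\big[\mathrm{cap}(\Phi(\overline{X}_N,R))^2\big] \leq E\Big[\Big(\sum_{i=1}^N C N^{c_0R^2}_{(i)}\Big)^2\Big] = C^2\sum_{i,j=1}^N E\big[N^{c_0R^2}_{(i)} N^{c_0R^2}_{(j)}\big],
\end{equation}
where $N^{c_0R^2}_{(i)}$ denotes the renewal count for the $i$-th (truncated) walk. For $i\neq j$ the walks are independent, so the expectation factors as $(E[N^{c_0R^2}])^2 \leq (C'R^2)^2$; for $i=j$ we use $E[(N^{c_0R^2})^2] = \mathrm{Var}(N^{c_0R^2}) + (E[N^{c_0R^2}])^2 \leq C''R^2 + (C'R^2)^2 \leq C R^4$ since $R>1$. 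Summing, the diagonal contributes $\leq CNR^4 \leq CN^2R^4$ and the off-diagonal $\leq CN^2R^4$, giving (\ref{eq:detercalc2ndub}).

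The only mildly delicate point — and the step I would be most careful about — is the pathwise covering estimate $\mathrm{cap}(B(X([0,t]),1)) \leq C N^t$: one must check that consecutive renewal increments, each of displacement exactly $1$, together with the unit sausage radius, allow the whole sausage up to time $t$ to be covered by the balls $B(X_{\tau^{n-1}},2)$ for $n = 1,\ldots,N^t$ (between $\tau^{n-1}$ and $\tau^n$ the walk stays within distance $1$ of $X_{\tau^{n-1}}$, so the radius-$1$ sausage stays within distance $2$), and then invoke (\ref{eq:union}) and translation-invariance plus (\ref{eq:ballcap}) to get $\mathrm{cap} \leq N^t\,\mathrm{cap}(B(0,2)) = C N^t$. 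Everything else is a routine application of (\ref{NTm1}) and independence.
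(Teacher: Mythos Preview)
Your proof is correct and follows essentially the same route as the paper's: cover each truncated sausage by $N^{c_0R^2}$ balls of radius $2$ via the renewal times, apply subadditivity and (\ref{eq:ballcap}) to get the pathwise bound $\mathrm{cap}\leq C N^{c_0R^2}$, then use (\ref{NTm1}) for the moment estimates. The only cosmetic difference is that for the second moment the paper invokes the discrete H\"older inequality $(\sum_i a_i)^2\leq N\sum_i a_i^2$ to reduce directly to $E[(N^{c_0R^2})^2]\leq c'R^4$, whereas you expand the square and treat diagonal and off-diagonal terms separately using independence; both arguments are equivalent.
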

\begin{proof}
By the definition of $N^t$ (See (\ref{Ntdef})), %And the fact that blah blah is 
 we know that for each $i=1,\ldots,N$, 
\begin{equation}
\Phi((X^{(i)}),R)\textrm{ is covered by no more than }N^{c_{0}R^{2}}\textrm{ balls of radius }2,\,P_{x_i}\mbox{-almost surely.}\label{eq:covercactusbyballs}
\end{equation}
%where $\overline{X}^{(i)}=(X^{(i)})$ for $i=1,\ldots,N$. 
Thanks to the independence of $X^{(i)}$, $i=1,\ldots,N$, the sub-additivity of Brownian capacity, see (\ref{eq:union})), and in the case of (\ref{eq:detercalc2ndub}) also by discrete H\"older inequality,
 to prove (\ref{eq:detercacub}) and (\ref{eq:detercalc2ndub}) it suffices to verify that 
\begin{equation}
E_{x_i}[N^{c_{0}R^{2}}]\leq cR^{2}\textrm{ and }E_{x_i}[(N^{c_{0}R^{2}})^2]\leq c'R^{4}.\label{eq:verify}
\end{equation}
In fact, one can easily check (\ref{eq:verify}) by (\ref{NTm1}). This finishes the proof of (\ref{eq:detercacub}) and (\ref{eq:detercalc2ndub}).
\end{proof}

In the next Proposition we derive a lower bound on the first moment of ${\rm cap}(\Phi(\overline{X}_{N},R))$.
\begin{prop}\label{prop:CactusEmin}
With the same setup as in Lemma \ref{lem:Ecactusub}, one has 
\begin{equation}\label{eq:CactusEmin}
E[\mathrm{cap}(\Phi(\overline{X}_N,R)]\geq C\min(NR^{2},R^{d-2}).
\end{equation}
\end{prop}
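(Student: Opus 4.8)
The plan is to bound the capacity of $\Phi(\overline{X}_N,R)$ from below via the variational characterization of Theorem \ref{Thm4.9}: for any probability measure $\mu$ supported on $\Phi(\overline{X}_N,R)$, one has $\mathrm{cap}(\Phi(\overline{X}_N,R)) \ge \big(\iint g(x,y)\,\mu(dx)\mu(dy)\big)^{-1}$, so it suffices to exhibit a random measure $\mu$ whose energy integral has expectation $O(\max(N^{-1}R^{-2}, R^{2-d}))$ and then use Jensen/Cauchy--Schwarz together with the first- and second-moment bounds from Lemma \ref{lem:Ecactusub} to pass from "expected energy small" to "expected capacity large". The natural candidate for $\mu$ is (a normalized version of) the occupation-type measure $\sum_{i=1}^N \int_{L/2}^{L} f_{X^i_s}(\cdot)\,ds$ with $L \asymp c_0 R^2$; this is essentially why Lemma \ref{lem:greenfunctiondetailedcontrol} was proved, since the double integral appearing there is exactly (up to normalization) the energy of this measure.

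Concretely, first I would fix $L = c_0 R^2$ and, restricting attention to the event that each $X^i$ has not exited $B(x_i, R/2)$ before time $c_0R^2$ (which by Lemma \ref{lem:exitproba} has probability at least $0.99$), note that on this event the sausage $B(X^i([L/2,L]),1)$ is contained in $\Phi(\overline{X}_N,R)$. Define the unnormalized measure $\sigma = \sum_{i=1}^N \int_{L/2}^L \mathbf{1}_{B(X^i_s,1)}(\cdot)\,ds$, so that its total mass is $m := \sigma(\mathbb{R}^d) = \sum_i \int_{L/2}^L \mathrm{Volume}(B(X^i_s,1))\,ds = N\cdot \tfrac{L}{2}\cdot \mathrm{Volume}(B(0,1)) \asymp NR^2$ deterministically, and its energy is $\mathcal{E}(\sigma) := \iint g(x,y)\,\sigma(dx)\sigma(dy) = \sum_{i,j=1}^N F_L(i,j)$ in the notation of \eqref{eq:FLdef}. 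Then $\mu = \sigma/m$ is a probability measure on $\Phi(\overline{X}_N,R)$ (on the good event), with energy $\mathcal{E}(\mu) = \mathcal{E}(\sigma)/m^2 \asymp (NR^2)^{-2}\sum_{i,j}F_L(i,j)$. By Theorem \ref{Thm4.9}, $\mathrm{cap}(\Phi(\overline{X}_N,R)) \ge m^2/\mathcal{E}(\sigma) = (NR^2)^2/\sum_{i,j}F_L(i,j)$ on the good event.

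Now I would take expectations. Writing $Z = \sum_{i,j=1}^N F_L(i,j)$, Lemma \ref{lem:greenfunctiondetailedcontrol} gives $E[Z] \le C(NL + N^2 L^{3-d/2}) \asymp C(NR^2 + N^2 R^{6-d})$. The quantity we want to lower-bound in expectation is $\mathrm{cap}(\Phi) \ge (NR^2)^2/Z$ (on an event of probability $\ge 0.99$), and by Cauchy--Schwarz (or Paley--Zygmund-type reasoning applied to $\mathrm{cap}(\Phi)$ itself) we have $E[\mathrm{cap}(\Phi)]\cdot E[1/\mathrm{cap}(\Phi)] \ge $ (something), but it is cleaner to argue: $(NR^2)^2 = \big(E[\text{mass of } \mu \text{ as part of } \sigma]\big)^2$ is deterministic, and $E\big[(NR^2)^2/Z\big] \ge (NR^2)^2/E[Z]$ by Jensen applied to the convex function $t\mapsto 1/t$. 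Hence on the good event $E[\mathrm{cap}(\Phi(\overline{X}_N,R))] \ge 0.99\cdot (NR^2)^2/E[Z] \ge c\,(NR^2)^2/(NR^2 + N^2R^{6-d})$. One checks that this last ratio is $\asymp \min(NR^2, R^{d-2})$: when $NR^2 \lesssim N^2 R^{6-d}$, i.e. $N \gtrsim R^{d-4}$, the denominator is $\asymp N^2 R^{6-d}$ and the ratio is $\asymp N^2R^4/(N^2R^{6-d}) = R^{d-2}$; when $N \lesssim R^{d-4}$ the denominator is $\asymp NR^2$ and the ratio is $\asymp NR^2$. Finally one should be slightly careful that conditioning on the good event does not destroy the bound on $E[Z]$ — here one uses that $Z \ge 0$ so $E[Z\,;\,\text{good}] \le E[Z]$, and the lower bound $\mathrm{cap}(\Phi) \ge (NR^2)^2/Z$ only needs the containment of the truncated sausages in $\Phi$, which holds on the good event.

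The main obstacle I expect is the passage from the expected-energy bound to the expected-capacity bound: $\mathrm{cap}$ is (roughly) the reciprocal of energy, and reciprocals interact badly with expectations, so one needs either Jensen in the favorable direction (as above, using that the total mass $m\asymp NR^2$ is deterministic — this is the crucial simplification that makes the argument go through cleanly) or, alternatively, a second-moment argument combining \eqref{eq:detercacub} and \eqref{eq:detercalc2ndub} of Lemma \ref{lem:Ecactusub} via Paley--Zygmund to control the lower tail of $\mathrm{cap}(\Phi)$. I would go with the deterministic-mass route since it avoids the second-moment bound entirely for this particular estimate, but I would keep the Paley--Zygmund alternative in mind in case the truncation to the good event interferes with the deterministic mass computation (it does not, because the truncation only shrinks $\Phi$, it does not change the definition of $\sigma$, which we can still build from the full Brownian paths up to time $L$).
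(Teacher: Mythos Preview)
Your overall strategy---variational characterization via an occupation-type measure, then Jensen/Cauchy--Schwarz combined with Lemma~\ref{lem:greenfunctiondetailedcontrol}---is exactly the paper's strategy. But there is one genuine gap.

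You define the ``good event'' as the event that \emph{each} $X^{(i)}$ stays in $B(x_i,R/2)$ up to time $c_0R^2$, and you claim this has probability at least $0.99$. Lemma~\ref{lem:exitproba} only gives this for a \emph{single} Brownian motion; the intersection over $i=1,\dots,N$ has probability $(0.99)^N$, which tends to $0$ as $N\to\infty$. After your conditional Jensen step you pick up a factor of $P[\text{good}]$ (in fact $P[\text{good}]^2$, since $E[1/Z;\text{good}]\ge P[\text{good}]^2/E[Z;\text{good}]$), and $(0.99)^{2N}$ kills the bound for large $N$. Your measure $\sigma$ is not supported on $\Phi(\overline X_N,R)$ off this event, so you cannot simply drop the restriction.

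The fix, which is what the paper does, is to require only that a positive fraction of the indices are good: set $J=\{i:T^{(i)}_{B(x_i,R/2)}\ge c_0R^2/2\}$ and work on the event $A=\{|J|\ge N/2\}$. Since $E[N-|J|]\le 0.01N$, Markov's inequality gives $P[A]\ge c_1>0$ uniformly in $N$. Build the occupation measure only from indices $i\in J$ (integrating over $[c_0R^2/4,c_0R^2/2]$ so that the relevant sausage pieces lie inside $\Phi$); on $A$ the total mass is still $\asymp NR^2$, and the energy is bounded by $\frac{C}{|J|^2R^4}\sum_{i,j\in J}F_L(i,j)\le \frac{C'}{N^2R^4}\sum_{i,j=1}^N F_L(i,j)$, to which Lemma~\ref{lem:greenfunctiondetailedcontrol} applies. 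The rest of your computation then goes through unchanged.
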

The key method used in the proof of this proposition is Theorem \ref{Thm4.9}.
\begin{proof}
%Let $x_i=X^{(i)}(0)$ for all $i=1,\ldots,N$. 
In this proof, we use superscripts to distinguish stopping times with respect to different Brownian motions, i.e., $T^{(i)}_\cdot$ stands for the stopping times with respect to $X^{(i)}$.
Write
\begin{equation}
J=\big\{ i\in\{1,\ldots,N\},T^{(i)}_{B(x_i,R/2)}\geq c_{0}R^{2}/2\big\}.
\end{equation}
%be the index set of all Brownian motions that do not travel $R/2$ far from where it starts within the time $c_0R^{2}/2$. 
See (\ref{eq:exitproba}) for the definition of $c_0$ in Lemma \ref{lem:exitproba}.

We then define a probability measure $m$ on $\Phi(\overline{X}_{N},R)$
through the density function $h_m$ (recall the definition of $f_z(\cdot)$ in the statement of Lemma \ref{lem:greenfunctiondetailedcontrol}) 
\begin{equation}
h_m(x)=\begin{cases}
c\big(|J|R^{2}\big)^{-1}\sum_{i\in J}\int_{c_0 R^{2}/4}^{c_0R^{2}/2}f_{X^{(i)}_t}(x)dt & \textrm{ if }J\neq\emptyset;\\
\big(\mathrm{Volume}(\Phi(\overline{X}_{N},R))\big)^{-1} & \textrm{ if }J=\emptyset,
\end{cases}
\end{equation}
%where 
%\begin{equation}
%f_{z}(\cdot)=1_{B(z,1)}(\cdot).
%\end{equation}
where $c$ is a constant that makes $m$ a probability measure.
Let $A=\{|J|\geq\frac{1}{2}N\}$. By (\ref{eq:exitproba}), %{[}MOVE THERE 
%\begin{equation}
%P_{x_{i}}[T_{B(x_{i},R/2)}\geq c_{0}R^{2}]\geq0.99>0
%\end{equation}
we know that 
\begin{equation}\label{eq:Adef}
P[A]\geq c_1
\end{equation}
for some $c_1>0$.

Since $m\in M_{1}(\Phi(\overline{X}_{N},R))$, by Theorem \ref{Thm4.9}, we see that (recall the definition of ${\cal E}(m)$ below (\ref{eq:capacityvariation}))
\begin{equation}\label{Cac1}
E\Big[\mathrm{cap}\big(\Phi(\overline{X}_{N},R)\big)\Big] \geq E\big[{\cal E}(m)^{-1}\big]\geq E\big[{\cal E}(m)^{-1};A\big].
\end{equation}
%=\E[[\inf\{{\cal E}(m)\in M_{1}(\Phi(\overline{M}_{N},R))\})^{-1}]
And on $A$ (note that in this case $J\neq\emptyset$)
 we obtain that (recall the definition of $F_L(i,j)$ in (\ref{eq:FLdef}))
\begin{equation}
{\cal E}(m)=\frac{C}{|J|^{2}R^{4}}\sum_{i,j\in J}F_{\frac{c_0}{2}R^2}(i,j).
\end{equation}
Therefore by Cauchy-Schwarz inequality, it follows that 
\begin{equation}
\begin{split}
E\big[{\cal E}(m)^{-1};A\big]\;\;  \geq \;\;& E\left[\frac{C}{|J|^{2}R^{4}}\sum_{i,j\in J}F_{\frac{c_0}{2}R^2}(i,j);A\right]^{-1}\cdot P[A]^{2}\\
  \stackrel{(\ref{eq:Adef})}{\geq} & C'N^{2}R^{4}E\left[\sum_{i,j\in J}F_{\frac{c_0}{2}R^2}(i,j);\, A\right]^{-1}\geq C'N^{2}R^{4}E\left[\sum_{i,j=1,\ldots,N}F_{\frac{c_0}{2}R^2}(i,j)\right]^{-1} \label{Cac2}\\
  \stackrel{(\ref{bigupperbound1})}{\geq} & C''N^{2}R^{4}\Big/\big(c''(NR^{2}+N^{2}R^{6-d})\big) \geq  C'''\min(NR^{2},R^{d-2}).
\end{split}
\end{equation}
The claim (\ref{eq:CactusEmin}) hence follows by putting (\ref{Cac1}) and (\ref{Cac2}) together. 
\end{proof}
Now we turn to the construction of the central object of this and the next subsection. Let $A$ be a compact
set 
and let $\omega$ be a point measure on $W^*$ such that
$
\omega=\sum_{i\geq0}\delta_{w_{i}}.
$
Let $N=\omega(W^*_A)$,
we denote by (recall the definition of $\pi_A$ in (\ref{eq:pikdef})) \begin{equation}
\overline{W}(\omega,A) =(\pi_{A}(w^*_{i_1}),\ldots\pi_{A}(w^*_{i_N})) \textrm{, where }
\{i_1,\ldots,i_N\}=\{i\geq 0;\,w^*_i\in W^*_A \}
\end{equation}
the collections of trajectories in the support of  $\omega$ that touch $A$. We then write
\begin{equation}
\Psi(\omega,A,R)=\Phi(\overline{W}(\omega,A),R).
\end{equation}
%for the ``point measure cactus'' associated with $\omega$ and $A$.  

Now we introduce some notation on the restriction of point measures on $W^*$.
\begin{definition}\label{decompdef}
Let $r,R$ be real numbers such that $1<r<R$, and let $\omega$ be a point measure on $W^{*}$ such that
$\omega=\sum_{i\geq0}\delta_{w^*_{i}}$ with $w^*_{i}\in W^{*}$. We write
\begin{equation}
\omega_{r}=\sum_{i\geq0,\;w^*_i\in W^*_{B(r)}}\delta_{w^*_{i}}
\end{equation}
for  a new point measure which is the restriction of $\omega$ to the set of trajectories
that intersect $B(r)$,  write
\begin{equation}
\omega_{r,\infty}=\omega-\omega_{r}
\end{equation}
for the restriction of $\omega$ to the set of trajectories that do
not intersect $B(r)$, and write
\begin{equation}
\omega_{r,R}=\omega_{R}-\omega_{r}
\end{equation}
for the restriction of $\omega$ to the set of trajectories that intersect
$B(R)$ but do not intersect $B(r)$.
\end{definition}

We now let $\widetilde\omega=\sum_{i\geq 0}\delta_{(w^*_i,\alpha_i)}$ be the interlacement process and apply 
\begin{equation}\label{eq:omegadef}
\omega=\sum_{i\geq 0,\alpha_i\leq\alpha}\delta_{w^*_i}
\end{equation}
to Definition \ref{decompdef} above. %be a Poisson point process on $W^* \times [0,\alpha]$ with intensity measure $\mu=\nu\times {\rm Leb}[0,\alpha]$,  which can be regarded as an equivalent definition of interlacement process at level $\alpha$. 
%We denote the law of $\omega$ by $\mathbb{P}^\alpha$. 
Note that point processes $\omega_{r}$ and $\omega_{r,\infty}$ are independent from each other.

We now give an estimate on the capacity of a random
cactus constructed from $\omega_{r,\infty}$ and a compact subset of
$\mathbb{R}^{d}$. %[MAYBE ADD A LINE OF EXPLANATION]
\begin{prop}\label{annuluscaplb}
Let $A$ be a compact subset of $\mathbb{R}^{d}$, and let $r,R$ be real numbers such that $1<r<R$, one has 
\begin{equation}\label{eq:annuluscaplb}
\mathbb{E}[{\rm cap}(\Psi(\omega_{r,\infty},A,R))]\geq c\min({\rm cap}(A)R^{2},R^{d-2})-C r^{d-2}R^{2}.
\end{equation}
\end{prop}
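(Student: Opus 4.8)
The plan is to decompose $\omega_{r,\infty}$ into the trajectories it contributes to $W^*_A$ and to reduce the capacity lower bound for $\Psi(\omega_{r,\infty},A,R)$ to the lower bound for the i.i.d.\ cactus already established in Proposition \ref{prop:CactusEmin}. First I would condition on the number of trajectories $N=\omega_{r,\infty}(W^*_A)$ and on their entrance points into $A$. By (\ref{eq:local}), $\mu_{A,\alpha}$ is a Poisson point process on $W_+$ with intensity $\alpha P_{e_A}$, so the trajectories in $\omega$ touching $A$, read from their entrance time on, are, conditionally on $N$, i.i.d.\ Brownian motions started from i.i.d.\ points distributed as $\widetilde{e}_A$; in particular $N$ is Poisson with parameter $\alpha\,\mathrm{cap}(A)$. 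The subtlety is that we use $\omega_{r,\infty}$, not all of $\omega$: we must discard those trajectories that also meet $B(r)$. So I would write $\omega(W^*_A) = \omega_{r,\infty}(W^*_A) + (\text{trajectories touching both } A \text{ and } B(r))$, and control the discarded part: the $\mu$-measure of trajectories entering $A$ and then hitting $B(r)$ is at most $\alpha\int g(z,\cdot)\,e_A(dz)$ integrated against $e_{B(r)}$, which by (\ref{eq:hitting proba}), (\ref{eq:ballcap}) and (\ref{eq:greenvalue}) is bounded by $C\,\mathrm{cap}(A)\cdot r^{d-2}\cdot(\text{something})$ — this is where the $-Cr^{d-2}R^2$ defect will come from. (Actually, I expect the cleaner route: lower-bound $\mathrm{cap}(\Psi(\omega_{r,\infty},A,R))$ by $\mathrm{cap}(\Psi(\omega,A,R)) - \mathrm{cap}(\text{cactus of the discarded trajectories})$ using sub-additivity (\ref{eq:union}), then take expectations.)

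Concretely, the key steps in order: (i) by (\ref{eq:union}), $\mathrm{cap}(\Psi(\omega,A,R)) \le \mathrm{cap}(\Psi(\omega_{r,\infty},A,R)) + \mathrm{cap}(\Psi(\omega_r \text{ restricted to } W^*_A,A,R))$, hence $\mathbb{E}[\mathrm{cap}(\Psi(\omega_{r,\infty},A,R))] \ge \mathbb{E}[\mathrm{cap}(\Psi(\omega,A,R))] - \mathbb{E}[\mathrm{cap}(\Psi(\omega_r,A,R))]$; (ii) for the first term, condition on $N=\omega(W^*_A)$ and the starting points: given $N$, the trajectories from $A$ are i.i.d.\ Brownian motions, so Proposition \ref{prop:CactusEmin} gives $\mathbb{E}[\mathrm{cap}(\Psi(\omega,A,R)) \mid N] \ge c\min(NR^2, R^{d-2})$; since $N$ is Poisson($\alpha\,\mathrm{cap}(A)$), taking expectation and using Jensen (the function $t\mapsto \min(tR^2,R^{d-2})$ is concave) yields $\mathbb{E}[\mathrm{cap}(\Psi(\omega,A,R))] \ge c\min(\alpha\,\mathrm{cap}(A)R^2, R^{d-2})$, up to adjusting constants for the Poisson fluctuations — one should be slightly careful here and perhaps instead use $\mathbb{E}[\min(NR^2,R^{d-2})] \ge c\min(\mathbb{E}[N]R^2, R^{d-2})$ which holds for Poisson $N$ by a direct two-regime estimate; (iii) for the subtracted term, the number of trajectories in $\omega_r$ that touch $A$ has expectation $\le \mu(W^*_{B(r)}\cap W^*_A) \le C\,\mathrm{cap}(A)\wedge(\text{hitting bound})$, and more to the point each contributes a cactus of capacity $\le CR^2$ by (\ref{eq:detercacub})-type reasoning, and there are on average $O(r^{d-2})$ such trajectories when $\mathrm{cap}(A)$ is large (or fewer), giving $\mathbb{E}[\mathrm{cap}(\Psi(\omega_r,A,R))] \le C r^{d-2} R^2$; here one uses $\mathrm{cap}(B(r))=Cr^{d-2}$ to count the trajectories of $\omega$ meeting $B(r)$ at all.

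I expect the main obstacle to be step (iii): bounding the capacity of the cactus built from the trajectories in $\omega_r\cap W^*_A$ by $Cr^{d-2}R^2$ uniformly in $A$. The count of such trajectories is $\omega(W^*_{B(r)}\cap W^*_A)$, which in expectation equals $\mu(W^*_{B(r)}\cap W^*_A)$; estimating this requires care because a trajectory meeting both $B(r)$ and $A$ can be split according to which it hits first, giving $\mu(W^*_{B(r)}\cap W^*_A)=P_{\alpha e_{B(r)}}[H_A<\infty]+P_{\alpha e_A}[H_{B(r)}<\infty]$ minus an overlap, and the first term is $\le \alpha\,\mathrm{cap}(B(r))=C\alpha r^{d-2}$ regardless of $A$ (since a probability is $\le 1$ after pulling out the mass $\mathrm{cap}(B(r))$), which is exactly the bound we want. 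Combining with the per-trajectory capacity bound $CR^2$ and the independence used to get a product/moment bound (or simply $\mathbb{E}[\sum_{\text{such }w}\mathrm{cap}(\text{sausage of }w)]$ via Campbell's formula applied to $\mu_{A,\alpha}$ or $\mu_{B(r),\alpha}$), this delivers the $-Cr^{d-2}R^2$ term. Putting (i), (ii), (iii) together gives (\ref{eq:annuluscaplb}).
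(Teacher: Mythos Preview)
Your proposal is correct and follows essentially the same route as the paper: decompose via sub-additivity into $\mathbb{E}[\mathrm{cap}(\Psi(\omega,A,R))]-\mathbb{E}[\mathrm{cap}(\Psi(\omega_r,A,R))]$, lower-bound the first term by conditioning on the Poisson number of trajectories and invoking Proposition~\ref{prop:CactusEmin} (the paper states this as a separate lemma, (\ref{eq:Ecapmin}), with proof deferred to \cite{GDRI}), and upper-bound the second term by $Cr^{d-2}R^2$. One small simplification the paper makes in step (iii): rather than counting trajectories that meet both $B(r)$ and $A$, it simply observes $\Psi(\omega_r,A,R)\subseteq\Phi(\overline{X},R)$ where $\overline{X}$ collects \emph{all} trajectories of $\omega_r$ read from their entrance into $B(r)$; their number is exactly $\mathrm{Pois}(\alpha\,\mathrm{cap}(B(r)))$, and (\ref{eq:detercacub}) gives the $CR^2$-per-trajectory bound directly, avoiding the two-sided hitting estimate you sketch.
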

To prove Proposition \ref{annuluscaplb} we need the following lemma.  We omit its proof because it is identical  to that of Lemma 4.3 in \cite{GDRI}, with
(\ref{eq:CactusEmin}) playing the role of (4.5) in Lemma 4.2 in \cite{GDRI}. 
\begin{lem}
For all $A$ compact subset of $\mathbb{R}^d$, one has
\begin{equation}
\mathbb{E}\Big[{\rm cap}\big(\Psi(\omega,A,R)\big)\Big]\geq c\min({\rm cap}(A)R^{2},R^{d-2}).\label{eq:Ecapmin}
\end{equation}
\end{lem}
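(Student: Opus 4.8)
The plan is to reduce (\ref{eq:Ecapmin}) to the deterministic starting‑point bound of Proposition~\ref{prop:CactusEmin} by conditioning on the trajectories of $\omega$ that hit $A$, and then to carry out an elementary first‑moment estimate for a Poisson random variable.

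First I would invoke the local description (\ref{eq:local}) with $K=A$: the forward parts $\pi_A(w_i)$ of the trajectories of $\omega$ that enter $A$ form a Poisson point process on $W_+$ with intensity $\alpha P_{e_A}$. Hence $N=\omega(W_A^*)$, the number of such trajectories, is Poisson distributed with parameter $\lambda\stackrel{\triangle}{=}\alpha\,{\rm cap}(A)$ (recall that ${\rm cap}(A)$ is the total mass of $e_A$); conditionally on $\{N=n\}$ the trajectories $\pi_A(w_{i_1}),\dots,\pi_A(w_{i_n})$ are i.i.d.\ with law $P_{\widetilde e_A}$; and conditionally on $\{N=n\}$ and on the entrance points $x_1=w_{i_1}(H_A),\dots,x_n=w_{i_n}(H_A)$ of $A$, these trajectories are independent Brownian motions started respectively at $x_1,\dots,x_n$. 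Since $\Psi(\omega,A,R)=\Phi(\overline W(\omega,A),R)$ depends only on these forward parts, whose "starting points'' are precisely the entrance points, it follows that, conditionally on $N$ and on $x_1,\dots,x_N$, the set $\Psi(\omega,A,R)$ has exactly the law of $\Phi(\overline X_N,R)$ considered in Lemma~\ref{lem:Ecactusub} and Proposition~\ref{prop:CactusEmin} (here $R>1$, as in Definition~\ref{df:cactus}).

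Next I would note that the lower bound of Proposition~\ref{prop:CactusEmin}, namely $E[{\rm cap}(\Phi(\overline X_N,R))]\geq C\min(NR^2,R^{d-2})$, is uniform in the starting points $x_1,\dots,x_N$, since its proof uses nothing about their location; it may therefore be invoked conditionally. Applying it conditionally and taking expectations yields
\begin{equation}\label{eq:condcap}
\mathbb{E}\big[{\rm cap}(\Psi(\omega,A,R))\big]\;\geq\;C\,\mathbb{E}\big[\min(NR^2,R^{d-2})\big]\;=\;CR^2\,\mathbb{E}\big[\min(N,R^{d-4})\big].
\end{equation}

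It then remains to establish the elementary fact that for $N\sim{\rm Poisson}(\lambda)$ and any real $m\geq1$ one has $\mathbb{E}[\min(N,m)]\geq c\min(\lambda,m)$ for a universal $c>0$, and to apply it with $m=R^{d-4}\geq1$ (using $d\geq5$ and $R>1$) and $\lambda=\alpha\,{\rm cap}(A)$. When $\lambda\leq1$ this is immediate from $\mathbb{E}[\min(N,m)]\geq\mathbb{P}[N\geq1]=1-e^{-\lambda}\geq(1-e^{-1})\lambda$ together with $\min(\lambda,m)=\lambda$; when $\lambda>1$, a Paley--Zygmund bound based on ${\rm Var}(N)=\lambda\leq\lambda^2$ gives $\mathbb{P}[N\geq\lambda/2]\geq\tfrac18$, so that with $t=\min(\lambda/2,m)\leq\lambda/2$ one gets $\mathbb{E}[\min(N,m)]\geq t\,\mathbb{P}[N\geq t]\geq t/8\geq\tfrac1{16}\min(\lambda,m)$. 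Plugging this back into (\ref{eq:condcap}) gives $\mathbb{E}[{\rm cap}(\Psi(\omega,A,R))]\geq c'R^2\min(\alpha\,{\rm cap}(A),R^{d-4})=c'\min(\alpha\,{\rm cap}(A)R^2,R^{d-2})$, which is (\ref{eq:Ecapmin}). The only genuinely delicate point is this elementary Poisson estimate — specifically, arranging that its constant is uniform over \emph{all} values of $\lambda$ and not merely for $\lambda$ in a compact subinterval of $(0,\infty)$ — everything else being the bookkeeping needed to see that Proposition~\ref{prop:CactusEmin} transfers verbatim after conditioning on the number and entrance points of the trajectories of $\omega$ that hit $A$.
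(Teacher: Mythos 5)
Your proof is correct and follows essentially the same route the paper takes (by deferral to Lemma 4.3 of \cite{GDRI}): condition on the Poisson$(\alpha\,\mathrm{cap}(A))$ number of trajectories of $\omega$ hitting $A$ and on their entrance points, apply the starting-point-uniform bound (\ref{eq:CactusEmin}) conditionally, and finish with the elementary estimate $\mathbb{E}[\min(N,m)]\geq c\min(\lambda,m)$ for Poisson $N$. Your Paley--Zygmund treatment of the case $\lambda>1$ is a valid way to make that last step uniform in $\lambda$, so no gaps remain.
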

\begin{proof}[Proof of Proposition \ref{annuluscaplb}]
We start with decomposing $\omega$:
\begin{equation}
\omega=\omega_{r}+\omega_{r,\infty}.
\end{equation}
By the definition of $\Psi$,
\begin{equation}
\Psi(\omega,A,R)=\Psi(\omega_{r,\infty},A,R)\cup\Psi(\omega_{r},A,R).
\end{equation}
By the sub-additivity of capacities, we obtain that
\begin{equation}
\mathbb{E}[{\rm cap}(\Psi(\omega_{r,\infty},A,R))]\geq \mathbb{E}[{\rm cap}(\Psi(\omega,A,R))]-\mathbb{E}[{\rm cap}(\Psi(\omega_{r},A,R))].
\end{equation}
Thanks to (\ref{eq:Ecapmin}), to prove (\ref{eq:annuluscaplb}), it suffices to show that 
\begin{equation}\label{eq:midub}
\mathbb{E}[{\rm cap}(\Psi(\omega_{r},A,R))]\leq C R^{2}r^{d-2}.
\end{equation}
To this end, suppose that
\begin{equation}
\omega_{r}=\sum_{i=1}^{N}\delta_{w^*_{i}}
\end{equation}
where $N=\omega(W^*_{B(r)})=\omega_r(W^*_{B(r)})$, $w^*_1,\ldots,w^*_N\in W^*$. 
%and $\overline{X}=(\pi_{B(r)}(w_{1}),\ldots,\pi_{B(r)}(w_{N}))=(X^{(1)},\ldots,X^{(N)})$, it is straight-forward to see that 
%\begin{equation}
%\Psi(\omega_{r},A,R)\subseteq\Phi(\overline{X},R)
%\end{equation}
By (\ref{eq:local})  and the definition of $\omega$ (see (\ref{eq:omegadef})) we know that $N\sim{\rm Pois}(\alpha{\rm cap}(B(r)))$ and conditioned on $N$ and the starting points $X^{(i)}_{0}$, $X^{(i)}$'s are independent Brownian motions.
Hence from (\ref{eq:detercacub}) we obtain that
\begin{equation}
\begin{split}
&\;\mathbb{E}\big[{\rm cap}(\Psi(\omega_{r},A,R)\big]\\
 \leq \;\;&\;\mathbb{E}\bigg[\sum_{n=0}^\infty \sup_{x_1,\ldots,x_n\in \partial A} \mathbb{E}\Big[\Phi(\overline{w},R)|N=n,\;\big(w_1(0),\ldots,w_n(0)\big)=\big(x_1,\ldots,x_n\big)\Big]\mathbb{P}[N=n]\bigg]\\ 
\overset{(\ref{eq:detercacub})}{\leq} &\; \mathbb{E}[N]cR^2\leq C R^{2}r^{d-2}.\end{split}
\end{equation}
This confirms (\ref{eq:midub}) as well as (\ref{eq:annuluscaplb}).
\end{proof}

\subsection{The upper bound}\label{ss:ubgd}

In this subsection, we prove the more difficult part of Theorem \ref{graphdistance}, namely the upper bound on the diameter of $G_{\alpha,1}$. As mentioned in Section \ref{sec:intro}, the proof scheme is parallel to that of \cite{GDRI} except a few simplifications. Let us also point out some technical issues we encounter when dealing with Brownian motion. First, the energy estimate (see Proposition \ref{prop:CactusEmin}) that gives the lower bound of the expected capacity of the truncated sausages requires a version  of Lemma 4.1 of \cite{GDRI} for Brownian motion. This is archived in Lemmas \ref{lem:Ptfbound} and \ref{lem:greenfunctiondetailedcontrol}. Second, when estimating capacities of the upper bound, we need an estimate on number of balls needed to cover a Brownian motion path, see (\ref{eq:covercactusbyballs}). 

We start by rephrasing our main goal in this subsection.
\begin{prop}\label{prop:gdub} For all $d\geq 3$, $\alpha>0$ 
\begin{equation}
\mathbb{P}[{\rm diam}(G_{\alpha,1})\leq s_{d}]=1.\label{eq:upper bound}
\end{equation}
\end{prop}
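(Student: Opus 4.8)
By the scaling property \eqref{scaling} we may assume $r=1$, and by Proposition~\ref{34notneeded} (together with the classical intersection properties of Brownian motion in dimensions $3$ and $4$) we may assume $d\geq 5$, so that $s_d\geq 2$. Fix a trajectory $w_{i_0}\in{\rm Supp}(\omega)$ and denote its sausage by $A(1)$. Define inductively $A(k+1)$ to be the union of $A(k)$ with all sausages $B(w_j(\mathbb{R}),1)$, $w_j\in{\rm Supp}(\omega)$, whose trace meets $A(k)$. Then ${\rm diam}(G_{\alpha,1})\leq s_d$ follows as soon as we show that, $\mathbb{P}$-a.s., $A(s_d+1)=\mathcal{I}^\alpha_1$, or equivalently that every sausage in the soup is eventually absorbed into $A(s_d+1)$. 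By the Poissonian structure and a union bound over the (countably many) trajectories, it suffices to prove that for a fixed independent ``test trajectory'' $w$ touching a large ball, the event that $w$ is \emph{not} connected to $A(s_d)$ by at most one further transfer has probability zero.

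The core estimate is a one-scale statement: for suitable $r_k\ll R_k$, with $\omega$ replaced by its restriction $\omega_{r_k,R_k}$ to trajectories meeting $B(R_k)$ but not $B(r_k)$ (these restrictions are independent across a sparse subsequence of scales), the $s_d$-step cactus $A^{(k)}(s_d)$ built inside $B(R_k)$ from $\omega_{r_k,R_k}$ has, with probability bounded below uniformly in $k$, capacity of order $R_k^{d-2}$ inside $B(R_k)$. This is where Proposition~\ref{annuluscaplb} enters: starting from $A(1)$, which has capacity $\asymp R_k^2$, one applies \eqref{eq:annuluscaplb} iteratively. Each application upgrades ``$\min({\rm cap}(A)R_k^2,R_k^{d-2})$'' from $R_k^{2j}$ to $R_k^{2j+2}$, up to the error term $Cr_k^{d-2}R_k^2$; choosing $r_k=R_k^{\beta}$ with $\beta$ small enough (e.g.\ $\beta<2/(d-2)$, and accounting for the $s_d$ iterations) keeps the accumulated errors negligible. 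After $s_d-1$ applications the capacity saturates at order $R_k^{d-2}$. One must also control that the cactus genuinely lives inside, say, $B(R_k)$ — this is guaranteed by the truncation at exit time of $B(x_i,R_k/2)$ in Definition~\ref{df:cactus} — and pass from a lower bound on the \emph{expected} capacity to a lower bound that holds with probability bounded below, using the second-moment bound \eqref{eq:detercalc2ndub} and the Paley--Zygmund inequality. Having a random set $A^{(k)}(s_d)\subset B(R_k)$ with ${\rm cap}\geq cR_k^{d-2}$ on an event $\mathcal{G}_k$ of probability $\geq c_1>0$, the hitting-probability formula \eqref{eq:hitting proba} together with \eqref{eq:ballcap} shows that an independent Brownian motion started anywhere in $B(R_k/2)$ hits $A^{(k)}(s_d)$ with probability $\geq c_2>0$ uniformly in $k$; in particular the test trajectory $w$, while crossing the annulus at scale $R_k$, meets $A^{(k)}(s_d)$ with probability $\geq c_2$ on $\mathcal{G}_k$.

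To conclude I would run this across an infinite geometric sequence of scales $R_0<R_1<\cdots$ chosen sparse enough (e.g.\ $R_{k+1}\geq 2R_k$ and $r_{k+1}>R_k$) that the events involving $\omega_{r_k,R_k}$ are independent, and apply the generalized Borel--Cantelli lemma (Lemma~\ref{lem:Borel}, or a straightforward independent-events version): the conditional probability that the test trajectory connects to the scale-$k$ cactus, given the outcomes at scales $<k$, is bounded below by $c_1c_2>0$ along the subsequence, whose ``mass'' diverges, so $\mathbb{P}$-a.s.\ infinitely many such connections occur. Since the scale-$k$ cactus is itself built within at most $s_d$ transfers starting from a single sausage that (again by the same Wiener-type argument applied to $A(1)$, or simply because $A(1)$ has positive density) is a.s.\ connected through the soup to $A(1)$, we obtain that $w$ lies within graph distance $s_d$ of $w_{i_0}$. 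A final union bound over all ordered pairs of trajectories in the soup (each pair playing the roles of $w_{i_0}$ and $w$) gives ${\rm diam}(G_{\alpha,1})\leq s_d$ almost surely.

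\textbf{Main obstacle.} The delicate point is the bookkeeping of scales: one needs $r_k$ large enough that the annular restrictions $\omega_{r_k,R_k}$ are mutually independent along the subsequence and that discarding trajectories meeting $B(r_k)$ does not destroy the cactus, yet small enough that the error terms $Cr_k^{d-2}R_k^2$ accumulated over the $s_d-1$ iterations of Proposition~\ref{annuluscaplb} remain of smaller order than $R_k^{d-2}$ at every intermediate stage. Equally technical is making the iteration of Proposition~\ref{annuluscaplb} rigorous: the proposition bounds the expected capacity of a cactus built from a \emph{compact deterministic} $A$, so to build $A(j+1)$ on top of the random $A(j)$ one must either condition on $A(j)$ (using that it is $\sigma(\omega_{r_k,r_{k,j}})$-measurable for a finer sub-annulus and that the soup used for the next layer is an independent diluted copy, as flagged in the introduction) or work with independent diluted soups $\omega^{(1)},\dots,\omega^{(s_d)}$ summing to $\omega$ via \eqref{eq:addcouple}. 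Handling this independence structure cleanly, while simultaneously upgrading the expected-capacity bounds to high-probability bounds, is the crux of the argument.
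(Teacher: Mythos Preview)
Your outline is essentially the paper's own proof: it too splits $\omega$ into independent diluted copies via \eqref{eq:addcouple}, seeds each scale's cactus with a truncated piece of one endpoint trajectory, iterates Proposition~\ref{annuluscaplb} with a Paley--Zygmund step (this is Proposition~\ref{prop:cactuslb}), derives the uniform hitting bound for the other endpoint (Proposition~\ref{prop:positivesuccess}), and runs generalized Borel--Cantelli across the sparse sequence of scales (Proposition~\ref{prop:definitivesuccess}). Two small cleanups: the paper uses $s_d+1$ diluted soups rather than $s_d$ (the two endpoint trajectories $X$ and $Z$ each sit in their own copy, with $s_d-1$ copies left for the intermediate layers), and since the seed at every scale is already a piece of $w_{i_0}$ itself, your remark about the scale-$k$ cactus needing to be ``connected through the soup to $A(1)$'' is superfluous.
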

%{[}SOME EXPLANATORY WORDS-THINK WHAT YOU NEED{]}
We first show that we only need to prove (\ref{eq:upper bound}) for
$d\geq5$.
\begin{prop}\label{34notneeded}
The claim (\ref{eq:upper bound}) is true for $d=3$ and $d=4$.\end{prop}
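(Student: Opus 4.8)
The plan is to reduce the claim $\mathrm{diam}(G_{\alpha,r})\le s_d=1$ for $d=3,4$ to the statement that $\mathbb{P}$-a.s.~any two trajectories in the support of $\omega$ have sausages of radius $r$ which intersect, i.e., that $E$ in $G_{\alpha,r}$ is the complete graph on $V$. By the scaling property (\ref{scaling}) we may again assume $r=1$. Since the number of trajectories touching any fixed compact set is finite $\mathbb{P}$-a.s., and by $\sigma$-additivity over a countable exhaustion of $\mathbb{R}^d$ by balls $B(n)$, it suffices to fix a large $R$ and show that a.s.~any two trajectories in $\mathrm{Supp}(\omega)$ which both enter $B(R)$ have intersecting unit sausages; letting $R\to\infty$ over the integers then gives the claim for all pairs, hence $\mathrm{diam}(G_{\alpha,1})\le 1$, while $\mathrm{diam}(G_{\alpha,1})\ge 1$ is trivial because $\omega$ has more than one trajectory a.s.

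First I would use the local description (\ref{eq:local}): $\mu_{B(R),\alpha}$ is a Poisson point process on $W_+$ with intensity $\alpha P_{e_{B(R)}}$, a finite measure. Condition on $N=\omega(W^*_{B(R)})$, the (a.s.~finite) number of trajectories touching $B(R)$; given $N=n$, these trajectories, read from their first entrance into $B(R)$, are $n$ i.i.d.~Brownian paths $(X^{(k)}_t)_{t\ge0}$ with initial law $\widetilde e_{B(R)}$, run for infinite time. I would then invoke the classical intersection property of Brownian motion: for $d\le 4$, two independent transient Brownian paths started from (possibly different) points in $\mathbb{R}^d$ intersect with probability one; consequently their traces, a fortiori their unit sausages, intersect a.s. (See e.g.~the discussion of double points and path intersections for $d\le4$ in standard references.) Taking a union bound over the $\binom{n}{2}$ pairs, a.s.~\emph{every} pair among the $N$ trajectories touching $B(R)$ has intersecting sausages; summing over $n$ against the law of $N$ removes the conditioning.

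The only mild subtlety — and the step I expect to be the main obstacle, though it is not a serious one — is that the relevant intersection statement must hold for \emph{forward} half-trajectories $w^{*,K,+}$ starting from their entrance point into $B(R)$, rather than for the full doubly-infinite trajectories; but since the forward part alone is already a Brownian motion run for infinite time, and since for $d\le4$ two independent such motions (started from arbitrary, even random, initial points) meet a.s., the forward parts suffice. One should also note that ``intersecting traces'' of two continuous paths immediately gives ``intersecting unit sausages'', so the radius $r=1$ (hence any $r>0$) is harmless, and in fact no capacity estimates from the preceding subsection are needed here — this is why the cases $d=3,4$ can be dispatched directly, as asserted in the statement. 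Assembling these pieces yields $\mathbb{P}[\mathrm{diam}(G_{\alpha,r})\le s_d]=1$ for $d=3$ and $d=4$.
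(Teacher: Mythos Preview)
Your overall reduction is correct and matches the paper's approach: for $d=3,4$ one has $s_d=1$, so it suffices to show that any two trajectories in the support of $\omega$ have intersecting $r$-sausages almost surely. The paper's own proof is essentially a two-line citation to this effect.

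However, there is a genuine factual error in your argument for $d=4$. You write that ``for $d\le 4$, two independent transient Brownian paths started from (possibly different) points in $\mathbb{R}^d$ intersect with probability one'', and then deduce sausage intersection as an a fortiori consequence. This is false for $d=4$: the classical result of Dvoretzky, Erd\H{o}s, Kakutani and Taylor is that two independent Brownian paths in $\mathbb{R}^d$ intersect almost surely if and only if $d\le 3$; in $d=4$ the traces are almost surely \emph{disjoint}. What \emph{is} true in $d=4$ --- and what the paper actually invokes --- is that two independent Wiener \emph{sausages} of any positive radius intersect almost surely; this is precisely Theorem 6.2 of Albeverio--Zhou \cite{Sausage4d}. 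So your argument goes through unchanged for $d=3$, but for $d=4$ you must cite the sausage-intersection result directly rather than path intersection, and your ``a fortiori'' runs in the wrong direction. Once you replace that one sentence, the rest of your setup (conditioning on $N$, union bound over pairs, passing $R\to\infty$) is fine and fleshes out what the paper leaves implicit.
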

\begin{proof}
When $d=3$ or $d=4$, $s_{d}=1$, hence it suffices to show
that two Wiener sausages, regardless of their starting points, will
hit each other almost surely. The case $d=3$ is classical. The case
$d=4$ follows by Theorem 6.2 of \cite{Sausage4d}.
\end{proof}
From now on in this subsection we only consider $d\ge5.$
Let $(X_t)_{t\geq0}$ be a Brownian motion in $\mathbb{R}^{d}$ with $X_0=x$. We denote its law and the respective expectation by $P^{X}_{x}$ and $E^{X}_x$.
Let $\omega^{(2)},\omega^{(3)},\ldots\in\overline{\Omega}$ be i.i.d.~ interlacement processes %random point measures on $(W^*,{\cal W}^*)$ 
at level $\alpha$, which are also independent
from $(X_t)_{t\geq 0}$. We denote by $\mathbb{P}^{(2)},\mathbb{P}^{(3)},\ldots$ their laws and by
$\mathbb{E}^{(2)},\mathbb{E}^{(3)},\ldots$ the respective expectations. For $s\geq1$,
we write $\mathbb{P}_{x}^{(s)}$ for the joint law $P^{X}_{x}\otimes \mathbb{P}^{(2)}\otimes \mathbb{P}^{(3)}\otimes\cdots\otimes \mathbb{P}^{(s)}$,
and $\mathbb{E}_{x}^{(s)}$ for the respective expectation. 

Now let $r$ and $R$ be positive reals such that $1<r<R$ and $|x|<R$. We define a sequence of random subsets of $\mathbb{R}^{d}$
associated to $X$ and $\omega^{(i)}$ in the following inductive manner.
We write 
\begin{equation}\label{eq:A1rRdef}
A^{1}(r,R)=\Phi(X(\cdot+T_{B^\circ(r)}),R),
\end{equation}
and for $s\geq2$, we build
$A^{s}(r,R)$ upon the measure $\omega_{r,\infty}^{(s)}$ (recall the
notation in Definition \ref{decompdef}) and $A^{s-1}(r,R)$ in the following manner:
\begin{equation}
A^{s}(r,R)=\Phi\left(\omega_{r,\infty}^{(s)},A^{s-1}(r,R),R\right).
\end{equation}
It is worth noting that 
\begin{equation}
\label{eq:inde}\mbox{$A^{s-1}(r,R)$ is independent from $\omega^{(s)}$.}
\end{equation}

%Since there is a natural coupling between $\mathbb{P}_{x}^{(s)}$ and $\mathbb{P}_{x}^{(s-1)}$, for $s\geq 2$, with a little abuse of notation we use the same notation for objects such as $A^{s-1}(\cdot,\cdot)$ or $\omega^{(s-1)}$ whose distribution remains the same in the two probability measures.
\begin{rem}
One can inductively prove that 
\begin{equation}\label{inductionA}
A^{s-1}(r,R)\subset B(sR)
\end{equation} 
and this implies that the definition of $A^s(r,R)$ is not changed if we replace $\omega_{r,\infty}^{(s)}$ by $\omega_{r,sR}^{(s)}$, i.e., 
\begin{equation}\label{inductionAs}
A^{s}(r,R)=\Phi\left(\omega_{r,sR}^{(s)},A^{s-1}(r,R),R\right).  
\end{equation}

%[HAVE TO PROPOSE A GOOD PARAGRAPH ..]
%Note that from the way we define $A^{(s)}$ it can be inferred taht
%for each $y\in A^{s}(r,R)$, there exists doubly-infinite trajectores
%$w_{s}\in{\rm Supp}(\omega_{r,\infty}^{(i)}),$, $2\leq i\leq s$, such
%that $y\in WS^{1}(w_{s})$, $WS(X_{t})$ intersects $WS^{1}(w_{1})$
%and for all $i\in\{2,\ldots,s\}$, $B(w_{i},1)\cap B(w_{i+1},1)\neq\emptyset$.
%{[}SAY ONE WORD ABOUT ABUSE OF NOTATION SOMEWHERE{]}\textbackslash{}$\cup W_{x}^{1}(t)$
%$t\in$some zone $\in B(r_{k+1})$

%$r_{k},R_{k}$ satisfies some lemma.$A_{k}$ is measuralbe w.r.t $\{X(t),t\in B(r_{k+1})\}$
%and $\omega_{r_{k},r_{k+1}}^{i}$, $\forall2\leq i\leq s_{d}$.
\end{rem}

Now we derive an upper bound on the second moment of ${\rm cap}(A^{s}(r,R))$.
\begin{prop}
\label{prop:cactus2ndub}For all $s\leq s_d$, one has
\begin{equation}
\mathbb{E}_{x}^{(s)}\left[{\rm cap}(A^{s}(r,R))^{2}\right]\leq CR^{2\min(d-2,2s)}.\label{eq:cactus2ndub}
\end{equation}
\end{prop}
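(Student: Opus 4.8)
The plan is to prove \eqref{eq:cactus2ndub} by induction on $s$. The base case $s=1$ is essentially \eqref{eq:detercalc2ndub}: since $A^1(r,R)=\Phi(X(\cdot+T_{B^\circ(r)}),R)$ is the cactus built from a single Brownian trajectory (started at $X(T_{B^\circ(r)})$, a point on $\partial B(r)\subset B(R)$ since $r<R$), Lemma \ref{lem:Ecactusub} with $N=1$ gives $\mathbb{E}^{(1)}_x[{\rm cap}(A^1(r,R))^2]\leq CR^4 = CR^{2\min(d-2,2)}$, using $d\geq 5$ so that $\min(d-2,2)=2$. For the inductive step, suppose \eqref{eq:cactus2ndub} holds for $s-1$; I want to bound $\mathbb{E}^{(s)}_x[{\rm cap}(A^s(r,R))^2]$ where $A^s(r,R)=\Phi(\omega^{(s)}_{r,\infty},A^{s-1}(r,R),R)$.

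The key step is to condition on $A^{s-1}(r,R)$, which by \eqref{eq:inde} is independent of $\omega^{(s)}$. Conditionally on $A^{s-1}(r,R)=A$ (a compact set, which moreover lies in $B((s-1)R)\subseteq B(s_dR)$ by \eqref{inductionA}), the set $\Psi(\omega^{(s)}_{r,\infty},A,R)$ is built from the trajectories of $\omega^{(s)}_{r,\infty}$ that hit $A$, each stopped after time $c_0R^2$ or exit from a ball of radius $R/2$. I would first control the conditional second moment $\mathbb{E}^{(s)}[{\rm cap}(\Psi(\omega^{(s)}_{r,\infty},A,R))^2\mid A^{s-1}(r,R)=A]$ in terms of ${\rm cap}(A)$: writing $N_A=\omega^{(s)}_{r,\infty}(W^*_A)$, we have $\Psi(\omega^{(s)}_{r,\infty},A,R)\subseteq \Phi(\overline{X}_{N_A},R)$ where $\overline{X}_{N_A}$ are $N_A$ conditionally independent Brownian motions started on $\partial A$; by \eqref{eq:local}, $N_A$ is Poisson with parameter at most $\alpha\,{\rm cap}(A)$ (monotonicity of capacity, since the trajectories hitting $A$ but not $B(r)$ are fewer than all trajectories hitting $A$), and then a computation like the one proving \eqref{eq:detercalc2ndub} — using subadditivity \eqref{eq:union}, the discrete Hölder/Cauchy–Schwarz step, and \eqref{NTm1} for the moments of $N^{c_0R^2}$, together with $\mathbb{E}[N_A^2]\leq \alpha{\rm cap}(A)+(\alpha{\rm cap}(A))^2$ — yields a bound of the form $\mathbb{E}^{(s)}[{\rm cap}(\Psi)^2\mid A]\leq C\big({\rm cap}(A)R^2 + {\rm cap}(A)^2R^4\big)$. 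Since we will be applying this with ${\rm cap}(A)\gtrsim R^0$ (indeed ${\rm cap}(A^{s-1}(r,R))\geq{\rm cap}(B(1))>0$ as $A^{s-1}$ always contains a unit ball, for $r<R$), the first term is dominated and we get $\lesssim C\,{\rm cap}(A)^2R^4$; alternatively one keeps both terms and absorbs them at the end.

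Taking expectation over $A^{s-1}(r,R)$ then gives $\mathbb{E}^{(s)}_x[{\rm cap}(A^s(r,R))^2]\leq C R^4\,\mathbb{E}^{(s-1)}_x[{\rm cap}(A^{s-1}(r,R))^2]$ (plus the lower-order term $CR^2\mathbb{E}[{\rm cap}(A^{s-1}(r,R))]$, which by Lemma \ref{lem:Ecactusub}-type bounds or Jensen is $\leq CR^2\cdot R^{\min(d-2,2(s-1))}$, again of lower order). Using the inductive hypothesis $\mathbb{E}^{(s-1)}_x[{\rm cap}(A^{s-1}(r,R))^2]\leq CR^{2\min(d-2,2(s-1))}$ we obtain $\mathbb{E}^{(s)}_x[{\rm cap}(A^s(r,R))^2]\leq CR^{4+2\min(d-2,2(s-1))}$. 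It remains to check $4+2\min(d-2,2(s-1))\leq 2\min(d-2,2s)$, equivalently $2+\min(d-2,2(s-1))\leq\min(d-2,2s)$: if $2(s-1)\leq d-2$ this reads $2+2(s-1)=2s\leq\min(d-2,2s)$, which holds since then $2s\leq d-2$ as well (as $s\leq s_d$ and the jump is by $2$), wait — one must be slightly careful at the boundary $s=s_d$ where $2s$ may exceed $d-2$; there $\min(d-2,2s)=d-2$ and $\min(d-2,2(s-1))=2(s-1)$ (since $2(s-1)\leq d-2$ by definition of $s_d=\lceil(d-2)/2\rceil$), so the inequality becomes $2+2(s-1)=2s\leq d-2$... which can fail by one when $d$ is odd. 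In that odd-$d$ boundary case one instead uses the crude deterministic bound $A^s(r,R)\subseteq B((s+1)R)$ from \eqref{inductionA}, so ${\rm cap}(A^s(r,R))\leq {\rm cap}(B((s+1)R))=C R^{d-2}$ by \eqref{eq:ballcap}, giving ${\rm cap}(A^s(r,R))^2\leq CR^{2(d-2)}=CR^{2\min(d-2,2s)}$ directly.

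The main obstacle is the conditional second-moment estimate: one must handle the double randomness coming from the Poisson number $N_A$ of relevant trajectories and from the Brownian trajectories themselves, correctly expand ${\rm cap}(\bigcup_i \cdot)^2\leq (\sum_i {\rm cap}(\cdot))^2$ and use Cauchy–Schwarz plus \eqref{NTm1} so that the Poissonian fluctuations of $N_A$ and the renewal fluctuations of each $N^{c_0R^2}$ combine into the clean $C({\rm cap}(A)R^2+{\rm cap}(A)^2R^4)$ bound; everything else is bookkeeping, plus the minor arithmetic check at the dimension-parity boundary handled by the deterministic ball bound above.
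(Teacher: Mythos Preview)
Your proof is correct and follows essentially the same approach as the paper: induction on $s$, conditioning on $A^{s-1}$ via \eqref{eq:inde}, using that $\omega^{(s)}_{r,\infty}(W^*_{A^{s-1}})$ is stochastically dominated by a Poisson variable with mean $\alpha\,{\rm cap}(A^{s-1})$, and combining \eqref{eq:detercacub}--\eqref{eq:detercalc2ndub} with the Poisson second-moment formula to close the recursion.

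The only organizational difference is that the paper invokes the deterministic bound ${\rm cap}(A^s)\leq {\rm cap}(B((s+1)R))\leq cR^{d-2}$ (from \eqref{inductionA} and \eqref{eq:ballcap}) at the very start, thereby reducing the claim to $\mathbb{E}[{\rm cap}(A^s)]\leq cR^{2s}$ and $\mathbb{E}[{\rm cap}(A^s)^2]\leq cR^{4s}$ for all $s\leq s_d$, with the $\min$ taken care of globally. This sidesteps the parity/boundary arithmetic you encounter at $s=s_d$ with $d$ odd, and also makes the lower-order term $R^2\,\mathbb{E}[{\rm cap}(A^{s-1})]$ automatic since the paper carries the first-moment bound along in the same induction (your Jensen workaround is fine too). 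Your treatment of that boundary case via the same deterministic ball bound is correct; the paper simply applies it once up front rather than as a patch at the end. A minor caveat: your remark that ``$A^{s-1}$ always contains a unit ball'' can fail for $s\geq 2$ (if no trajectory of $\omega^{(s-1)}_{r,\infty}$ hits $A^{s-2}$), so you should indeed retain both terms as in your ``alternatively'' clause---which is exactly what the paper does.
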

Notice that the constants in the statement above and the proof below do depend on $s$. However, since we only look at $s\leq s_d$, we can safely drop this dependence in the notation.
\begin{proof}
In this proof we write $A^{s}$ as a shorthand for $A^{s}(r,R)$.  By the monotonicity
of the capacity % (see ??)
and  (\ref{inductionA}), we know that 
\begin{equation}
{\rm cap}(A^{s})\leq{\rm cap}(B((s+1)R))\leq cR^{d-2}.
\end{equation}
Hence to prove (\ref{eq:cactus2ndub}), it suffices to show that for all $1\leq s\leq s_d$, one has 
\begin{equation}
\mathbb{E}_{x}^{(s)}\left[{\rm cap}(A^{s})^{2}\right]\leq c''R^{4s}.\label{eq:cactusinductionub}
\end{equation}
We now prove (\ref{eq:cactusinductionub}) by induction. When $s=1$, Lemma
\ref{lem:Ecactusub} implies that
\begin{equation}
\mathbb{E}_{x}^{(s)}\left[({\rm cap}(A^{1}))^{2}\right]\leq c''R^{4}.
\end{equation}
Suppose now that (\ref{eq:cactusinductionub}) is true for some $s-1\geq 1$.
By the definition of $\Psi$, $A^{s}$ consisits of $\omega^{(s)}_{r,\infty}(W^*_{A^{s-1}})$ Wiener sausages. Moreover, we know that  conditioned on $A^{s-1}$ % [OR $\sigma(\omega_1,\ldots,\omega_{s-1})$] %$\mathbb{P}^{(s)}_x$, 
\begin{equation}\label{eq:leqPoisson}
\omega^{(s)}_{r,\infty}(W^*_{A^{s-1}})\leq \omega^{(s)}(W^*_{A^{s-1}})\sim {\rm Pois}\left(\alpha{\rm cap}(A^{s-1})\right).
\end{equation}
Hence, %the left-hand side  
%is stochastically dominated by a Poisson random variable with intensity
%$\alpha{\rm cap}(A^{s-1})$, [ADD INDEPENDENCE? This Sentence is problematic] hence 
%\begin{equation}
%\mathbb{E}_{x}^{(s)}[{\rm cap}(A^{s})]\stackrel{(\ref{eq:detercacub})}{\leq}CR^{2}\mathbb{E}_{x}^{(s)}[N_{A^{s-1}}(\omega_{r,\infty}^{s})]\stackrel{(\ref{eq:leqPoisson})}{\leq} CR^{2}\mathbb{E}_{x}^{(s-1)}[\alpha{\rm cap}(A^{s-1})]\leq cR^{2s}.
%\end{equation}
%[point measure cactus]
%Similarly, the upper bound on the second moment follows with the help
%from the second moment formula for Poisson random variables:
\begin{equation}
\begin{split}
\mathbb{E}_{x}^{(s)}\left[({\rm cap}(A^{s}))^{2}\right]&\stackrel{(\ref{eq:detercalc2ndub})}{\leq}cR^{4}\mathbb{E}_{x}^{(s)}\left[\omega^{(s)}_{r,\infty}(W^*_{A^{s-1}})^{2}\right]\stackrel{(\ref{eq:leqPoisson})}{\leq} cR^4\mathbb{E}_{x}^{(s)}\left[\omega^{(s)}(W^*_{A^{s-1}})^{2}\right]\\
&\stackrel{(\ref{eq:leqPoisson})}{\leq}cR^4\left(\mathbb{E}_{x}^{(s-1)}\left[(\alpha{\rm cap}(A^{s-1}))^{2}\right]+\mathbb{E}_{x}^{(s-1)}\left[\alpha{\rm cap}(A^{s-1})\right]\right)\\
& \stackrel{(\ref{eq:cactusinductionub})}{\underset{\rm Jensen}{\leq}} c'R^4 \left(R^{4s-4}+R^{2s-2}\right)\leq c''R^{4s}.
\end{split}
\end{equation}%[ADD ONE MORE TERM]
The claim 
(\ref{eq:cactusinductionub}) thus follows thus by induction. This finishes the proof of (\ref{eq:cactus2ndub}).
%This proves (\ref{eq:cactusinductionub}) for $s$. Hence (\ref{eq:cactusinductionub}) is true for all $s=1,\ldots,s_d$. Thus the proof of  is complete.
\end{proof}

We now inductively derive a lower bound on the expectation of the capacity of $A^{s}(r,R)$. 
\begin{prop}
\label{prop:cactuslb}There exists an $\overline{\epsilon}\in(0,1)$,
such that for all $1\leq s\leq s_{d}$, and for all positive reals $r$ and $R$
that satisfy 
\begin{equation}
1<r^{d-2}\leq\overline{\epsilon} R,\label{eq:rRcondition}
\end{equation}
 one has 
\begin{equation}
\mathbb{E}_{x}^{(s)}\left[{\rm cap}(A^{s}(r,R))\right]\geq c(s)R^{\min(d-2,2s)}.\label{eq:Ecapinduction}
\end{equation}
for a sequence of positive constants $c(1),\ldots,c(s)$. In particular,
\begin{equation}
\mathbb{E}_{x}^{(s_{d})}[{\rm cap}(A^{s_{d}}(r,R))]\geq cR^{d-2}.\label{eq:cactussdinf}
\end{equation}

\end{prop}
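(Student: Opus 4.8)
The plan is to argue by induction on $s$, with the base case $s=1$ supplied directly by Proposition \ref{prop:CactusEmin} (after discarding the trajectories in the small ball): indeed $A^1(r,R)=\Phi(X(\cdot+T_{B^\circ(r)}),R)$ is a single sausage stopped at time $c_0R^2\wedge T_{B(\cdot,R/2)}$, so taking $N=1$ in \eqref{eq:CactusEmin} gives $\mathbb{E}^{(1)}_x[{\rm cap}(A^1(r,R))]\geq cR^2$, which is the right bound since $\min(d-2,2)=2$ for $d\geq5$. For the inductive step, assume \eqref{eq:Ecapinduction} for $s-1$, i.e.\ $\mathbb{E}^{(s-1)}_x[{\rm cap}(A^{s-1}(r,R))]\geq c(s-1)R^{\min(d-2,2(s-1))}$. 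Conditionally on $A^{s-1}=A^{s-1}(r,R)$, which by \eqref{eq:inde} is independent of $\omega^{(s)}$, the set $A^s(r,R)=\Psi(\omega^{(s)}_{r,\infty},A^{s-1},R)$ is exactly the random cactus built from $\omega^{(s)}_{r,\infty}$ over the compact set $A^{s-1}$. Therefore Proposition \ref{annuluscaplb} applies conditionally:
\begin{equation*}
\mathbb{E}\big[{\rm cap}(\Psi(\omega^{(s)}_{r,\infty},A^{s-1},R))\,\big|\,A^{s-1}\big]\geq c\,\min\big({\rm cap}(A^{s-1})R^{2},R^{d-2}\big)-Cr^{d-2}R^{2}.
\end{equation*}

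The next step is to take expectations over $A^{s-1}$ and handle the $\min$. The function $t\mapsto\min(tR^2,R^{d-2})$ is concave in $t\geq0$, but that goes the wrong way for Jensen; instead I would split on the event $\{{\rm cap}(A^{s-1})\geq \tfrac12 c(s-1)R^{\min(d-2,2(s-1))}\}$. Using the first-moment lower bound from the induction hypothesis together with the second-moment upper bound ${\rm cap}(A^{s-1})^2\leq \mathbb{E}^{(s-1)}_x[{\rm cap}(A^{s-1})^2]\leq CR^{2\min(d-2,2(s-1))}$ from Proposition \ref{prop:cactus2ndub} and the Paley--Zygmund inequality, this event has probability bounded below by a constant $\kappa(s)>0$. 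On that event, $\min({\rm cap}(A^{s-1})R^2,R^{d-2})\geq c'\min(R^{\min(d-2,2(s-1))+2},R^{d-2})=c'R^{\min(d-2,2s)}$, using $\min(d-2,2(s-1))+2=\min(d,2s)\geq\min(d-2,2s)$ and handling the saturation at $R^{d-2}$ exactly. Hence $\mathbb{E}^{(s)}_x[{\rm cap}(A^s(r,R))]\geq c''(s)R^{\min(d-2,2s)}-Cr^{d-2}R^2$.

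Finally, the condition \eqref{eq:rRcondition}, $r^{d-2}\leq\overline{\epsilon}R$, makes the error term negligible: $Cr^{d-2}R^2\leq C\overline{\epsilon}R^3$, and since $\min(d-2,2s)\geq\min(d-2,4)\geq 3$ for $d\geq5$ and $s\geq 2$... wait — one must be careful when $d=5$ and the main term is $R^3$; here choosing $\overline{\epsilon}$ small enough that $C\overline{\epsilon}\leq \tfrac12 c''(s)$ for every $s\in\{1,\dots,s_d\}$ (there are only finitely many, so a single $\overline{\epsilon}$ works) absorbs the error and yields \eqref{eq:Ecapinduction} with $c(s)=\tfrac12 c''(s)$. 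The specialization \eqref{eq:cactussdinf} is then immediate from $2s_d\geq d-2$ by the definition \eqref{eq:sddef} of $s_d$. I expect the main obstacle to be the Paley--Zygmund step: one needs the induction to carry \emph{both} a matching lower bound on the first moment and an upper bound on the second moment so that the anticoncentration estimate produces a uniform positive probability, and one must check that the constant $\kappa(s)$ and the resulting $c(s)$ do not degenerate across the finitely many scales $s=1,\dots,s_d$; keeping the constants' dependence on $s$ explicit throughout is the bookkeeping one has to be careful about.
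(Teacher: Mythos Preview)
Your proposal is correct and follows essentially the same route as the paper: induction on $s$ with the base case supplied by Proposition~\ref{prop:CactusEmin}, the inductive step via a conditional application of Proposition~\ref{annuluscaplb}, Paley--Zygmund (using the second-moment bound from Proposition~\ref{prop:cactus2ndub}) to replace $\mathbb{E}[\min({\rm cap}(A^{s-1})R^2,R^{d-2})]$ by a constant times $R^{\min(d-2,2s)}$, and absorption of the $Cr^{d-2}R^2\leq C\overline{\epsilon}R^3$ error by choosing $\overline{\epsilon}$ small uniformly over the finitely many $s\leq s_d$. The paper organizes the choice of $\overline{\epsilon}$ as a decreasing sequence $\epsilon(s)$ refined at each step and sets $\overline{\epsilon}=\epsilon(s_d)$, but this is the same bookkeeping you describe.
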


\begin{proof}%[Proof of Proposition \ref{prop:cactuslb}]
As in the proof of Proposition \ref{prop:cactus2ndub}, we still write $A^{s}$ for $A^{s}(r,R)$. We postpone our choice
of $\epsilon$ until the end of this proof. We prove (\ref{eq:Ecapinduction})
by induction on $s$. By (\ref{eq:CactusEmin}) we know that 
\begin{equation}
\mathbb{E}_{x}^{(s)}[{\rm cap}(A^{1})]\geq c(1)R^{2}.
\end{equation}
Set $\epsilon(1)=1/2$. 
Let $2\leq s\leq s_d$, and assume the induction hypothesis holds for $s-1$:
\begin{equation}
\mathbb{E}_{x}^{(s-1)}[{\rm cap}(A^{s-1})]\geq c(s-1)R^{\min(d-2,2s-2)}\label{eq:caplbinduction}
\end{equation}
for some $c(s-1)>0$, for all $r,R$ such that $1<r^{d-2}\leq \epsilon(s-1)R$. By (\ref{eq:caplbinduction}) above and
the upper bound on its second moment (see (\ref{eq:cactus2ndub})), the Paley-Zygmund inequality (see e.g.~\cite{PaleyZ})
implies that there exists a positive constant $c_1(s-1)$ and $c_2(s-1)$ such that 
\begin{equation}\label{eq:inductionstep2}
\mathbb{P}_{x}^{(s-1)}[{\rm cap}(A^{s-1})\geq c_1(s-1)R^{\min(d-2,2s-2)}]\geq c_2(s-1).
\end{equation}
Then it follows from Proposition \ref{annuluscaplb} and the definition
of $\Psi$ that 
\begin{equation}\begin{split}
&\qquad\;\;\,\mathbb{E}_{x}^{(s)}[{\rm cap}(A^{s})]  \stackrel{(\ref{eq:annuluscaplb})}{\geq}  \mathbb{E}_{x}^{(s-1)}\left[c\min({\rm cap}(A^{s-1})R^{2},R^{d-2})-C r^{d-2}R^{2}\right]\\
&\;\;\,\geq\;\;\mathbb{E}_{x}^{(s-1)}\left[c'\min( R^{\min(d-2,2s-2)}\times R^{2},R^{d-2})1_{{\rm cap}(A^{s-1})\geq c_1(s-1)R^{\min(d-2,2s-2)}}\right]-C r^{d-2}R^{2}\\
 & \stackrel{(\ref{eq:inductionstep2})}{\geq}  c''(s-1)\min(R^{2}\times R^{\min(d-2,2s-2)},R^{d-2})-C r^{d-2}R^{2}\\
 & \stackrel{(\ref{eq:rRcondition})}{\geq} c''(s-1) R^{\min(d-2,2s)}-c'\epsilon' R^{3}\stackrel{(*)}{\geq}\frac{c''(s-1)}{2}R^{\min(d-2,2s)},
\end{split}
\end{equation}
with a choice of sufficiently small $\epsilon'$ which makes inequality
marked with $(*)$ valid.
Letting
$c(s)=c''(s-1)/2$ and $\epsilon(s)=\min(\epsilon(s-1),\epsilon')$,
% for all $2\leq s\leq s_{d}$ and $d\geq5$.
we thus confirm the induction step for $s$. Hence (\ref{eq:Ecapinduction}) is verified for all $s=1,\ldots,s_d$. Let $\overline{\epsilon}=\epsilon(s_d)$. The claim (\ref{eq:cactussdinf}) follows from the definition of $s_d$ (see (\ref{eq:sddef})). \end{proof}
\begin{rem}
1) Note that in the proof above, in order to proceed with induction, one needs to have a bound like (\ref{eq:inductionstep2}) for ${\rm cap}(A^s)$. This is obtained through Paley-Zygmund inequality, for which the lower bound on its first moment and  the upper bound on the second moment are prerequisites.

\noindent 2) The combination of Propositions \ref{prop:cactus2ndub} and \ref{prop:cactuslb}
indicates that the $cR^{\min(d-2,2s)}$ is the right order for $\mathbb{E}_{x}^{(s)}[{\rm cap}(A^{s}(r,R))]$.	 Most importantly, $A^{s_d}$, a subset of $B((s_d+1)R)$, has capacity of order $R^{d-2}$. This means, in terms of capacity, $A^{s_d}$ ``saturates'' the ball $B((s_d+1)R)$. 
\end{rem}
One can use the proposition above to study the probability of another independent Brownian motion hitting $A^{s}(r,R)$.
\begin{definition}
\label{Zdef}Consider $d\geq5$. Let $(Z_t)_{t\geq 0}$ be a Brownian Motion on
$\mathbb{R}^{d}$ with $Z_{0}=z\in B(R)$,  independent from
$X$ and $\omega^{(2)},\ldots,\omega^{(s)}$. In the following part of this section, we denote its law by $P^{Z}_{z}$ and the respective expectation by $E^{Z}_{z}$. We write $\widetilde{\mathbb{P}}$ as a shorthand for $P^Z_{z}\otimes\mathbb{P}_{x}^{(s_{d})}$.
\end{definition}
\begin{prop} ($d\geq 5$)\label{prop:positivesuccess}
With the above definition, and the $\overline\epsilon$ chosen in Proposition \ref{prop:cactuslb},
there exists positive constants $c_{2}=c_{2}(\alpha,d)$ and $c_{3}=c(\alpha,d)$, $\overline{R}>0$
such that for all positive reals $r$ and $R$ that satisfy %[WHAT IS OVERLINE R?]
\begin{equation}
r>1\textrm{ and }R\geq\max(r^{d-2}/\overline{\epsilon},\overline{R})\label{eq:rRcondition2}
\end{equation}
one has %[GIVE A NAME TO THE LAW]
\begin{equation}
\widetilde{\mathbb{P}}\left[H^{(Z)}(A^{s_{d}}(r,R))<T_{B^\circ(R^{2})}^{(Z)}\right]\geq c_{3}.\label{eq:synthetichittingproba}
\end{equation}
\end{prop}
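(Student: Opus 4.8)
The plan is to exploit the hitting-probability formula \eqref{eq:hitting proba} together with the capacity saturation estimate \eqref{eq:cactussdinf}. First I would observe that the set $A^{s_d}(r,R)$ is, by the inductive bound \eqref{inductionA}, contained in $B((s_d+1)R)$, hence in $B^\circ(R^2)$ for $R$ large (this is where the threshold $\overline R$ enters). Thus, conditionally on $A^{s_d}(r,R)$, the event $\{H^{(Z)}(A^{s_d}(r,R))<T^{(Z)}_{B^\circ(R^2)}\}$ contains the event that $Z$, started from $z\in B(R)$, hits the compact set $K\stackrel{\triangle}{=}A^{s_d}(r,R)$ before exiting $B^\circ(R^2)$. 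To bound this from below I would compare the Brownian motion killed on exiting $B^\circ(R^2)$ with the free Brownian motion: writing $g_{R^2}$ for the Green function of $B^\circ(R^2)$, one has the standard estimate $g_{R^2}(w,w')\geq g(w,w')-\sup_{w''\in\partial B^\circ(R^2)}g(w,w'')\geq c|w-w'|^{2-d}-c'R^{2(2-d)}$ uniformly for $w,w'\in B((s_d+1)R)$, so that, using the equilibrium-measure representation as in the proof of the lemma below \eqref{muSub},
\begin{equation}
P^Z_z\big[H_K<T_{B^\circ(R^2)}\big]=\int g_{R^2}(z,y)\,e_K(dy)\geq \big(c''R^{2-d}-c'''R^{2(2-d)}\big)\,{\rm cap}(K)\geq c_4 R^{2-d}\,{\rm cap}(K)
\end{equation}
for $R\geq\overline R$, where I use that $|z-y|\leq (s_d+2)R$ for $z\in B(R)$, $y\in K$.

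Next I would take expectations over $\mathbb{P}^{(s_d)}_x$ and invoke \eqref{eq:cactussdinf}, which gives $\mathbb{E}^{(s_d)}_x[{\rm cap}(A^{s_d}(r,R))]\geq cR^{d-2}$ under the hypothesis \eqref{eq:rRcondition2} (which is exactly \eqref{eq:rRcondition} with $\epsilon=\overline\epsilon$). Hence
\begin{equation}
\widetilde{\mathbb{P}}\big[H^{(Z)}(A^{s_d}(r,R))<T^{(Z)}_{B^\circ(R^2)}\big]\geq c_4 R^{2-d}\,\mathbb{E}^{(s_d)}_x\big[{\rm cap}(A^{s_d}(r,R))\big]\geq c_4 c\, R^{2-d}R^{d-2}=c_3>0,
\end{equation}
which is precisely \eqref{eq:synthetichittingproba}. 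Note the first-moment lower bound alone suffices here; one does not need the Paley--Zygmund refinement of \eqref{eq:inductionstep2} at this stage, since we are only after a positive (not high) probability.

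The one point requiring a little care — and the main obstacle — is the passage from the unconditional Green function $g$ to the Green function $g_{R^2}$ of the ball $B^\circ(R^2)$, i.e.\ making sure the killing on $\partial B^\circ(R^2)$ only costs a lower-order correction. The key is that $K\subset B((s_d+1)R)$ sits deep inside $B^\circ(R^2)$ (their scales differ by a factor $R$), so both the diagonal behaviour $|z-y|^{2-d}\asymp R^{2-d}$ and the harmonic correction $\asymp R^{2(2-d)}$ are controlled, and for $R\geq\overline R$ the former dominates. A clean way to package this is to first run $Z$ until it either hits $K$ or exits $B^\circ((s_d+2)R)$ (which happens before $T_{B^\circ(R^2)}$), and apply the hitting estimate on that smaller ball, where all distances are comparable to $R$; the invariance-under-scaling argument then reduces everything to a fixed estimate at scale $1$. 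Aside from this, the proof is a direct combination of \eqref{eq:hitting proba}, \eqref{eq:greenvalue}, \eqref{inductionA} and \eqref{eq:cactussdinf}.
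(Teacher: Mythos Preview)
Your approach is correct and essentially coincides with the paper's: both hinge on \eqref{eq:hitting proba}, \eqref{eq:greenvalue}, the containment \eqref{inductionA}, and the first-moment saturation \eqref{eq:cactussdinf}. One small correction: the identity $P^Z_z[H_K<T_{B^\circ(R^2)}]=\int g_{R^2}(z,y)\,e_K(dy)$ with the \emph{free} equilibrium measure $e_K$ is not an equality but only the inequality $\geq$ (the right-hand side equals the probability that $Z$ hits $K$ before $T_{B^\circ(R^2)}$ \emph{and} never returns to $K$ after $T_{B^\circ(R^2)}$); fortunately this is the direction you need. The paper sidesteps this point by writing $\widetilde{\mathbb P}[H^{(Z)}_A<T^{(Z)}_{B^\circ(R^2)}]\geq \widetilde{\mathbb P}[H^{(Z)}_A<\infty]-\widetilde{\mathbb P}[T^{(Z)}_{B^\circ(R^2)}\leq H^{(Z)}_A<\infty]$, bounding the first term below by $c_4$ via \eqref{eq:cactussdinf} and the second above by $\sup_{z'\in\partial B(R^2)}P_{z'}[H_{B((s_d+1)R)}<\infty]\leq c_5R^{2-d}$, which is exactly the ``harmonic correction'' you isolate via $g_{R^2}$.
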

%We omit the proof here because its similarity to [??]. A proof of this Proposition can be found in the Appendix.

\begin{proof}%[Proof of Proposition \ref{prop:positivesuccess}]

We write $A$ for $A^{(s_{d})}(r,R)$ throughout this proof. %By 
%(), [REWORK] implies that (note that the condition (\ref{eq:rRcondition2})
%is stronger than \ref{eq:rRcondition}), $Z_{t}$ is independent from
%$\mathbb{P}_{x}^{s_{d}}$) 
On the one hand, by (\ref{eq:hitproba}),
\begin{equation}
\widetilde{\mathbb{P}}\left[H^{(Z)}_A<\infty\right]=\mathbb{E}_{x}^{(s_{d})}\left[\int g(z,y)e_{A}(dy)\right].
\end{equation}
By (\ref{inductionA}), %we know that $A\in B(cR)$, hence 
for any $y\in A$,
%we obtain that 
\begin{equation}
g(z,y)\geq CR^{2-d}.
\end{equation}
By (\ref{eq:cactussdinf}) (note that condition (\ref{eq:rRcondition2})
is stronger than (\ref{eq:rRcondition})) we obtain that 
\begin{equation}
\widetilde{\mathbb{P}}\left[H^{(Z)}(A)<\infty\right]\geq CR{}^{2-d}\mathbb{E}_{x}^{(s_{d})}[{\rm cap}(A)]\stackrel{(\ref{eq:cactussdinf})}{\geq}c''R^{d-2}R^{2-d}=c_{4}.\label{eq:synproba1}
\end{equation}
On the other hand, by the strong Markov property of %Brownian Motion
$(Z_t)_{t\geq 0}$, %and the fact that $A\in B(cR)$, we know that 
\begin{equation}
\begin{split}
&\quad\widetilde{\mathbb{P}}\left[\infty>H^{(Z)}_A\geq T^{(Z)}_{B^\circ(R^{2})}\right]
\leq\sup_{z'\in\partial B(R^{2})}P_{z'}^Z\otimes\mathbb{P}_{x}^{s_{d}}\left[H^{(Z)}_A<\infty\right]\\ \stackrel{(\ref{inductionA})}{\leq}&\sup_{z'\in\partial B(R^{2})}P_{z'}^Z[H_{B(cR)}<\infty],\label{eq:synproba2}
\end{split}
\end{equation}
moreover, %while the inequality marked with $(*)$ follows from the following estimate
\begin{equation}\begin{split}
&\sup_{z'\in\partial B(R^{2})}P^{Z}_{z'}[H_{B(cR)}<\infty]\\ \leq&\sup_{z'\in\partial B(R^{2}),y\in B(cR)}g(z',y){\rm cap}(B(cR))\stackrel{(\ref{eq:ballcap})}{\leq} cR^{4-2d}\cdot c'R^{d-2}\leq c_{5}R^{2-d}.
\end{split}\label{eq:synproba3}
\end{equation}
Hence the claim (\ref{eq:synthetichittingproba}) follows by combining
(\ref{eq:synproba1}),  (\ref{eq:synproba2}) and  (\ref{eq:synproba3}), choosing appropriate
$c_{3}$ and $\overline{R}$.
\end{proof}

As the last step before the final theorem, we set up a sequence of scales and show that almost surely, at some scale, $(Z_t)_{t\geq 0}$ hits the ``cactus-shaped'' set $A$ (in fact, we are able to show that such hitting happens at infinitely many scales).

For $x,z\in\mathbb{R}^{d}$, we define two sequences of positive
real numbers $(r_{k})_{k\geq0}$ and $(R_{k})_{k\geq0}$ through (see Propositions \ref{prop:cactuslb} and \ref{prop:positivesuccess} respectively for the definition of $\overline{\epsilon}$ and $\overline{R}$
\begin{equation}
r_{0}=\max(|x|,|z|)+1+2(s_{d}+1)\overline{R},\ R_{0}=\max(r_{0},\overline{\epsilon}^{-1}r_{0}^{d-2}),\label{eq:rR0}
\end{equation}
and for $k\geq1$ 
\begin{equation}
r_{k+1}=\max(R_{k}^{2},2(s_{d}+1)R_{k}),\ R_{k+1}=\overline{\epsilon}^{-1}r_{k+1}^{d-2}.\label{eq:rRk}
\end{equation}
Notice that for all $k\geq 0$ , conditions (\ref{eq:rRcondition2}) and  (\ref{eq:rRcondition}) are satisfied (when $r$ and $R$ are replaced by $r_k$ and $R_k$).
\begin{prop}\label{prop:definitivesuccess} ($d\geq 5$)For all $x,z\in\mathbb{R}^{d}$ and for $(r_{k})_{k\geq0}$ and $(R_{k})_{k\geq0}$
defined inductively in (\ref{eq:rR0}) and (\ref{eq:rRk}) (note that they depend implicitly on $x$ and $z$), one has
\begin{equation}
\widetilde{\mathbb{P}}\Big[\limsup_{k}\{H^{(Z)}\big(A^{s_{d}}(r_{k},R_{k})\big)<\infty\}\Big]=1.\label{eq:eventualhit}
\end{equation}
\end{prop}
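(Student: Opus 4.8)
The plan is to verify the hypotheses of the generalized Borel--Cantelli lemma (Lemma \ref{lem:Borel}) applied to the events $\Delta_k = \{H^{(Z)}(A^{s_d}(r_k,R_k)) < \infty\}$. The crucial point to exploit is that, by construction, the scales satisfy $r_{k+1} \geq R_k^2$, so the event $\Delta_k$ can be detected inside the ball $B^\circ(R_k^2)$ — more precisely, on the complement of $\{H^{(Z)}(A^{s_d}(r_k,R_k)) < T^{(Z)}_{B^\circ(R_k^2)}\}$ we have by Proposition \ref{prop:positivesuccess} a lower bound $c_3$, and this latter event depends only on the trajectory of $Z$ up to time $T^{(Z)}_{B^\circ(R_k^2)}$ together with the interlacement processes $\omega^{(2)},\ldots,\omega^{(s_d)}$ restricted (via (\ref{inductionAs})) to trajectories touching $B((s_d+1)R_k) \subset B^\circ(R_k^2) \subset B^\circ(r_{k+1})$. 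Since at the next stage $A^{s_d}(r_{k+1},R_{k+1})$ is built out of $X(\cdot + T_{B^\circ(r_{k+1})})$ and the restrictions $\omega^{(s)}_{r_{k+1},\infty}$, i.e.\ trajectories avoiding $B(r_{k+1})$, the information determining $\Delta_{k+1},\Delta_{k+2},\ldots$ and the information determining $\Delta_1,\ldots,\Delta_k$ live on disjoint (hence independent) parts of the Poissonian soups and of the paths $X$, $Z$.

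First I would make the above independence statement precise. Fix $k$ and condition on arbitrary values $\delta_1 = d_1,\ldots,\delta_{k-1} = d_{k-1}$. The event $\{\delta_j = d_j\}$ for $j \leq k-1$ is measurable with respect to the $\sigma$-algebra $\mathcal{G}_{k-1}$ generated by $Z$ stopped at $T^{(Z)}_{B^\circ(R_{k-1}^2)}$, by $X$ stopped at $T_{B^\circ(r_k)}$ (note $r_k \geq R_{k-1}^2$ so this contains what is needed up to scale $k-1$), and by the pieces $\omega^{(s)}_{*,R_{k-1}^2}$ of the soups (trajectories touching $B(R_{k-1}^2)$). Here I should be slightly careful: detecting $\delta_j$ requires knowing $A^{s_d}(r_j,R_j)$, which by (\ref{inductionA}) sits in $B((s_d+1)R_j) \subseteq B(r_{j+1}) \subseteq B(r_k)$, and knowing whether $Z$ hits it before $T^{(Z)}_{B^\circ(R_j^2)}$ with $R_j^2 \leq r_{j+1} \leq r_k$; all of this is $\mathcal{G}_{k-1}$-measurable. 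On the other hand, $\Delta_k$ is implied by the event $\{H^{(Z)}(A^{s_d}(r_k,R_k)) < T^{(Z)}_{B^\circ(R_k^2)}\}$, and $A^{s_d}(r_k,R_k)$ is built from $X(\cdot + T_{B^\circ(r_k)})$ and from $\omega^{(s)}_{r_k,\infty}$, which are independent of $\mathcal{G}_{k-1}$ by the strong Markov property of $X$ and $Z$ at the relevant exit times and by the restriction property noted below Definition \ref{decompdef} (independence of $\omega_{r}$ and $\omega_{r,\infty}$). Using the strong Markov property of $Z$ at $T^{(Z)}_{B^\circ(R_{k-1}^2)}$ and the fact that $Z$ started anywhere on $\partial B(R_{k-1}^2)$ still starts inside $B(R_k)$ (since $R_k \geq r_k \geq R_{k-1}^2$), Proposition \ref{prop:positivesuccess} applies and gives
\begin{equation}
\widetilde{\mathbb{P}}\big[\Delta_k \,\big|\, \delta_1 = d_1,\ldots,\delta_{k-1}=d_{k-1}\big] \geq c_3
\end{equation}
for every choice of $d_1,\ldots,d_{k-1} \in \{0,1\}$, where $c_3 > 0$ does not depend on $k$.

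With $b_k := c_3$ for all $k \geq 1$ we have $\sum_k b_k = \infty$, so Lemma \ref{lem:Borel} yields $\widetilde{\mathbb{P}}[\limsup_k \Delta_k] = 1$, which is exactly (\ref{eq:eventualhit}). The main obstacle I anticipate is the bookkeeping in the independence step: one must check carefully that the $\sigma$-algebra recording $\delta_1,\ldots,\delta_{k-1}$ really is independent of the data building $A^{s_d}(r_k,R_k)$, which hinges on the precise inequalities $r_k \geq R_{k-1}^2 \geq 2(s_d+1)R_{k-1}$ and $R_k \geq r_k$ built into (\ref{eq:rR0})--(\ref{eq:rRk}), so that successive ``annular'' cacti use disjoint portions of the soups and successive excursions of $X$ and $Z$. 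The probabilistic content — the uniform lower bound $c_3$ — is already supplied by Proposition \ref{prop:positivesuccess}, so once the measurability/independence structure is laid out cleanly the conclusion is immediate from the generalized Borel--Cantelli lemma.
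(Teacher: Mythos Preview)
Your overall strategy matches the paper's: set up a filtration separating scales, invoke the strong Markov property of $X$ and $Z$ together with the independence of $\omega^{(s)}_{r_k}$ and $\omega^{(s)}_{r_k,\infty}$, and apply the generalized Borel--Cantelli lemma using the uniform lower bound $c_3$ from Proposition \ref{prop:positivesuccess}. However, there is a genuine gap in your measurability step.

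You define $\Delta_j=\{H^{(Z)}(A^{s_d}(r_j,R_j))<\infty\}$, and then claim that ``detecting $\delta_j$ requires \ldots\ knowing whether $Z$ hits it before $T^{(Z)}_{B^\circ(R_j^2)}$''. This is not what $\delta_j$ records: the event $\Delta_j$ asks whether $Z$ \emph{ever} hits $A^{s_d}(r_j,R_j)$, and since Brownian motion can exit $B^\circ(R_j^2)$ and return to $B((s_d+1)R_j)$ at arbitrarily late times, $\Delta_j$ is \emph{not} measurable with respect to the $\sigma$-algebra $\mathcal{G}_{k-1}$ you describe. Consequently the conditioning event $\{\delta_1=d_1,\ldots,\delta_{k-1}=d_{k-1}\}$ is not independent of the data building $A^{s_d}(r_k,R_k)$ and of the post-$T^{(Z)}_{B^\circ(r_k)}$ increments of $Z$, and the Borel--Cantelli hypothesis is not verified.

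The paper repairs exactly this point: it applies Lemma \ref{lem:Borel} not to your $\Delta_k$ but to the smaller events
\[
\Gamma_k=\big\{H^{(Z)}(A^{s_d}(r_k,R_k))\circ\theta^{(Z)}_{U}<T^{(Z)}_{B^\circ(R_k^2)}\circ\theta^{(Z)}_{U}\big\},\qquad U=T^{(Z)}_{B^\circ(r_k)},
\]
which \emph{are} $\mathcal{F}_k$-measurable for the filtration
\[
\mathcal{F}_k=\sigma\big(Z_t,\,0\le t\le T^{(Z)}_{B^\circ(r_{k+1})};\;X_t,\,0\le t\le T^{(X)}_{B^\circ(r_{k+1})};\;\omega^{(i)}_{r,r_{k+1}},\,i=2,\ldots,s_d\big).
\]
Since $\Gamma_k\subseteq\Delta_k$, once $\widetilde{\mathbb{P}}[\limsup_k\Gamma_k]=1$ is established the desired conclusion follows. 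Your argument becomes correct as soon as you replace $\Delta_j$ by $\Gamma_j$ throughout the conditioning; the lower bound from Proposition \ref{prop:positivesuccess} and the independence bookkeeping you outline then go through essentially as you wrote.
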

\begin{proof}
We fix $x$ and $z$ throughout this proof. For the convenience of notation, we write $A_k$ for $A^{s_d}(r_k,R_k)$.
%We denote $P_{z}\otimes{\cal \mathbb{P}}_{x,\omega}^{s_{d}}$
%by $\mathbb{P}$ [BAD!] for the sake of simplicity. 
We first claim that, to prove (\ref{eq:eventualhit}), it suffices to prove for any
$g_{1},\ldots,g_{k-1}\in\{0,1\}$, 
\begin{equation}
\widetilde{\mathbb{P}}[\Gamma_{k}|\gamma_{1}=g_{1},\ldots,\gamma_{k-1}=g_{k-1}]\geq c>0,\label{eq:nomatterwhat}
\end{equation}
where
\begin{equation}\label{eq:Gammakdef}
\Gamma_{k}=\left\{H^{(Z)}(A^{s_{d}}(r_{k},R_{k}))\circ\theta^{(Z)}_{U}<T^{(Z)}(B^{\circ}(R_{k}^{2}))\circ\theta^{(Z)}_{U}\right\},
\end{equation}
(in which we write %$V=T_{B(r_{k})}^{(X)}$
 and $U=T_{B^\circ(r_{k})}^{(Z)}$),
and $\gamma_{i}=1_{\Gamma_{i}}$, $i\in\mathbb{Z}^{+}$. 
In fact, by Borel-Cantelli lemma (see Lemma \ref{lem:Borel}), (\ref{eq:nomatterwhat})
implies that 
\begin{equation}
\widetilde{\mathbb{P}}\left[\limsup_{k}\Gamma_{k}\right]=1.\label{eq:nomatterwhat2}
\end{equation}
 Since the Brownian motion is transient when $d\geq3$, $T^{(Z)}(B^
\circ(R_{k}^{2}))<\infty$
, $\widetilde{\mathbb{P}}$-almost surely, (\ref{eq:nomatterwhat2}) implies that 
\begin{equation}
\limsup_{k}\left\{H^{(Z)}(A_k)<\infty\right\},\; \widetilde{\mathbb{P}}\mbox{-a.s}.
\end{equation}
This finishes the proof of (\ref{eq:eventualhit}) once we confirm (\ref{eq:nomatterwhat}).

Now we prove (\ref{eq:nomatterwhat}).
Pick $k\in\mathbb{Z}^{+}$,  and write 
\begin{equation}
\mathcal{F}_{k}=\sigma\left(Z_{t},\,0\leq t\leq T_{B^\circ(r_{k+1})}^{(Z)};\, X_{t},\,0\leq t\leq T_{B^\circ(r_{k+1})}^{(X)};\,\omega_{r,r_{k+1}}^{i},\, i=1,\ldots s_{d}\right).
\end{equation}
$({\cal F}_{k})_{k\geq1}$ forms a filtration. By (\ref{eq:rRk}) and (\ref{inductionA}), $\Gamma_k\in \mathcal{F}_k$.

It is straight-forward that for any $g_{1},\ldots,g_{k-1}\in\{0,1\}$, 
\begin{equation}
\left\{\gamma_{1}=g_{1},\ldots,\gamma_{k-1}=g_{k-1}\right\}\in{\cal F}_{k-1},
\end{equation}
Hence to prove (\ref{eq:nomatterwhat}) it suffices to prove that
\begin{equation}
\widetilde{\mathbb{P}}[\Gamma_{k}|\mathcal{F}_{k-1}]\geq c>0.\label{eq:nomatterwhat3}
\end{equation}
In fact, to benefit from Proposition \ref{prop:positivesuccess}, in the following calculation we are going to
integrate out $\omega_{r_{k}}^{i}$, since $\omega_{r_{k}}^{i}$, $i=1,\ldots,s_{d}-1$ are independent
from $\omega_{r_{k},\infty}^{i}$, $i=1,\ldots,s_{d}-1$, and then apply twice the strong Markov property, first to $(Z_{t})_{t\geq0}$ at time $U(=T_{B^\circ(r_{k})}^{(Z)})$ then to $(X_{t})_{t\geq 0}$
at time $V\stackrel{\triangle}{=}T^{(X)}_{B(r_{k})}$. We write $x'=X_V$ and $z'=Z_{V}$, and denote
by $P^{\omega_{r_{k},\infty}^{i}}$ the law of $\omega^{i}_{r_k,\infty}$ seen as the law of $\omega^{i}$ 
%$$
%V=T_{B(r_{k})}^{(X)}
%$$
Now with the properties above, we know that
\begin{eqnarray*}
\widetilde{\mathbb{P}}[\Gamma_{k}|\mathcal{F}_{k-1}] & = & P^{Z}_{z}\otimes P_{x}^X\otimes\mathbb{P}^{(s_{d})}\left[\Gamma_{k}\big|{\cal F}_{k-1}\right]\\
 & \stackrel{(\ref{eq:nomatterwhat3})}{=} & P^{Z}_{z}\otimes P_{x}^X\bigotimes_{i=2}^{s_{d}}P^{\omega_{r_{k},\infty}^{i}}\left[\Gamma_{k}\big|\sigma(Z_{t},\,0\leq t\leq U;\, X_{t},\,0\leq t\leq V)\right]\\
 & \stackrel{{\rm Markov}}{=} & P^{Z}_{z'}\otimes P_{x}^X\bigotimes_{i=2}^{s_{d}}P^{\omega_{r_{k},\infty}^{i}} \left[H^{(Z)}_{A_k} < T^{(Z)}_{B^\circ(R_k^2)}\big|\sigma( X_{t},\,0\leq t\leq  V)\right]\\
 & \stackrel{{\rm Markov}}{\underset{(*)}{=}} & P_{z'}^Z \otimes P_{x'}^X\bigotimes_{i=2}^{s_{d}}P^{\omega_{r_{k},\infty}^{i}}\left[H^{(Z)}_{A_k} < T^{(Z)}_{B^\circ(R_k^2)}\right]\\
% & \stackrel{(\ref{eq:inclusion})}{\geq} & P_{z'}^Z\otimes P_{x}^X\otimes\prod_{i=2}^{s_{d}}P^{\omega_{r_{k},\infty}^{i}}[H_{\widetilde{A}_{k}}^{(X)}\leq T_{B^\circ(R_{k}^{2})}^{(X)}\big|\sigma(X_{t},\,0\leq t\leq V)]\\
 & \stackrel{(\ref{eq:inde})}{=} & P^Z_{z'}\otimes P_{x'}^X\otimes\mathbb{P}^{(s_d)}[H^{(Z)}_{A_k} < T^{(Z)}_{B^\circ(R_k^2)}]\stackrel{(\ref{eq:synthetichittingproba})}{\geq} c>0,
\end{eqnarray*}
where in $(*)$ we use the fact that $A_k$ is independent from $\sigma(X_t,\,0\leq t\leq V)$.
This confirms (\ref{eq:nomatterwhat}) and finishes the proof of Proposition \ref{prop:definitivesuccess}.
\end{proof}
\begin{remark}\label{rem:graph distance}
From (\ref{eq:eventualhit}), we can conclude that that there exist $w^2,w^3,\ldots,w^{s_d}\in {\rm supp}(\omega^{(2)}+\cdots+\omega^{(s_d)})$, such that $X(\mathbb{R}_+)\cap B(w^2(\mathbb{R}),1)\neq \emptyset$, $w^2(\mathbb{R}) \cap B(w^3(\mathbb{R}),1)\neq \emptyset$,\ldots,  $w^{s_d}(\mathbb{R}) \cap B(Z(\mathbb{R}_+),1)\neq \emptyset$. Colloquially, this means one can connect $X$ and $Z$ via $s_d-1$ ``intermediate sausages''.
	\end{remark}
Once we have proved Proposition \ref{prop:definitivesuccess}, Proposition \ref{prop:gdub} follows from repeating the same argument as Lemma 4.13 and Theorem 1.1 from \cite{GDRI}, hence we omit the proof here.

Finally, Theorem \ref{graphdistance} follows by combining Propositions \ref{prop:graphlb} and \ref{prop:gdub} and applying scaling property of Brownian interlacements.

\section{Existence of Non-Trivial Phase Transition for the Vacant Set}\label{se:3}

This section is dedicated to Theorem \ref{thm:phase transition}, namely the existence of a non-trivial phase transition in percolation for the vacant set of Brownian interlacements.

We start with some comments on the strategy. The central object in this approach is the dyadic renormalization tree, introduced in Section \ref{se:3.1}, with a set of specific rules on how vertices of the tree should be embedded in $\mathbb{Z}^d$ so that the image of all leaves are in some sense ``well-separated''. 

The finiteness of $\alpha^*_1$ follows from the following energy-entropy competition. To be more specific, if %for some constant $\beta>1$ 
the vacant set % $\mathcal{V}^\alpha_\beta$ 
crosses an annulus with the size of order $6^n$, then 
 by a argument similar to \cite{Shortproof}, 
there exists $\mathcal{T}$, an embedding of the dyadic tree of depth $n$, 
such that the crossing path passes through the image of all $2^n$ leaves of $\mathcal{T}$, which are well-separated. On the one hand, we can show that the probability cost of the interlacement set avoiding all leaves is of order $\exp(-c\alpha 2^n)$, thanks to a capacity lower bound relying on the well-separation of leaves in the embedding. On the other hand, % by $\mathcal{X}_T$, which a set of of $2^n$ vertices. On the one hand,
the number of possible embeddings is bounded by $C^{2^n}$ according to the rules. 
Hence, when $\alpha$ is sufficiently large, the crossing probability decays to $0$ very quickly as $n$ tends to infinity, implying that there is no chance of percolation.

To show the positiveness of $\alpha^*_1$ we focus on a plane in the Euclidean space and prove that when $\alpha$ is sufficiently small, $\mathcal{V}^\alpha_1$ percolates on this plane. Using planar duality, to this end, we only need to show that for a given large $L_0$, when $\alpha$ is small, the probability for the interlacement set to cross a planar annulus at scale $ L_0\cdot 6^n$ decays rapidly as $n$ tends to infinity. % We show that $mathcal{I}^\alpha_\beta$, Brownian interlacements made of larger sausages but still constructed on the same probability space
We are able to show that if such crossing takes place, the interlacement set will touch $2^n$ planar ``frames'' of size $L_0$, all of which are centered at the image of leaves of a planar embedding of the dyadic tree of depth $n$. The calculation of the probability of such event can be reduced to a large deviation estimate on the number of hitting of the frames by the trajectories from Brownian interlacements. Again, thanks to the well-separation of the leaves of dyadic renormalization tree,  if $L_0$ is chosen sufficiently large, such probability can be arbitrarily small by letting $\alpha$ tend to $0$, giving a bound strong enough to beat the combinatorial complexity. This shows that the crossing within the interlacement set is unlikely when $\alpha$ is small. Thus follows the positiveness of $\alpha^*_1$.

We now record notations we need later in this section. We denote
by $F$ the plane in $\mathbb{R}^d$ passing through the origin:
\begin{equation}
F=\mathbb{R}^{2}\times\{0\}^{d-2}\subset\mathbb{R}^{d},\label{eq:Fdef}
\end{equation}
and by $F_{Z}$ the discrete plane passing through the origin:
\begin{equation}
F_{Z}=\mathbb{Z}^{2}\times\{0\}^{d-2}=F\cap\mathbb{Z}^{d}.\label{eq:FZdef}
\end{equation}
%For all $L>1$ and $y\in F$, we denote the  squares boxes (of radius
%$L-1$) on the slab $F$ by [WRONG! NEED UPDATE]
%\begin{equation}
%\square_{y}^{L}=\partial B_{\infty}(y,L-1)\cap F,\label{eq:slabbox}
%\end{equation}
We also denote the ``stick'' of length $L$ by
\begin{equation}
J_{L}=[0,L]\times\{0\}^{d-1}.
\end{equation}
For $x\in\mathbb{Z}^d$,  $R\in \mathbb{N}$, we denote by 
%For $x\in F_Z$,
\begin{equation}
S(x,R)=\{y\in \mathbb{Z}^d :|y-x|_\infty= R\}
\end{equation}
the $l^\infty$-sphere (boundary of a discrete $l^{\infty}$-ball) centered at $x$ of radius $R$, and for $x\in F_Z$, by
\begin{equation}\label{eq:S2def}
S^{(2)}(x,R)=\{y\in F_Z :|y-x|_\infty= R\}
\end{equation}
the two dimensional discrete ``square'' centered at $x$ with ``size'' $R$.
From now on we fix
\begin{equation}
\beta=2\sqrt{d}+4.\label{eq:betachoice}
\end{equation}

The next lemma gives an upper bound on the capacity of the inflation of $S^{(2)}(x,L)$.
\begin{lem}
For all $L>3\beta$,
\begin{equation}
\mathrm{cap}(B(S^{(2)}(x,L),\beta))\leq\begin{cases}
cL & d\geq4\\
\frac{c'L}{\ln L} & d=3.
\end{cases}\label{eq:salamiupperbound}
\end{equation}
\end{lem}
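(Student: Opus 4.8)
The plan is to bound the capacity of the $\beta$-sausage of the two-dimensional discrete square $S^{(2)}(x,L)$ by covering it with $O(L)$ balls of fixed radius and exploiting the variational characterization of capacity in Theorem \ref{Thm4.9}. By translation invariance of capacity we may assume $x=0$. The square $S^{(2)}(0,L)$ consists of roughly $8L$ lattice points lying in the plane $F$; hence $B(S^{(2)}(0,L),\beta)$ is covered by $N=cL$ balls of radius $\beta+1$, all contained in the plane-slab $F+B(0,\beta)$. By sub-additivity and scaling this already gives ${\rm cap}(B(S^{(2)}(0,L),\beta))\leq cL\cdot{\rm cap}(B(0,\beta+1))\leq c'L$, which settles the case $d\geq 4$ with room to spare. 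The content is therefore entirely in the extra logarithmic gain in dimension $d=3$, where sub-additivity is too lossy.

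For $d=3$ the strategy is to use the lower bound in Theorem \ref{Thm4.9}: it suffices to exhibit a probability measure $\mu$ supported on $K:=B(S^{(2)}(0,L),\beta)$ whose Green energy $\int\int g(y,y')\mu(dy)\mu(dy')$ is at least $c\,\frac{\ln L}{L}$, since ${\rm cap}(K)=({\rm inf}_\mu\,{\cal E}(\mu))^{-1}$. The natural choice is $\mu$ = normalized Lebesgue (or counting-type) measure spread uniformly along the one-dimensional ``wire'' $S^{(2)}(0,L)$ thickened by $\beta$; informally $\mu$ has linear density $\sim 1/L$ along a curve of length $\sim L$. One then estimates, for a typical point $y$ on the wire,
\begin{equation}
\int_K g(y,y')\,\mu(dy') \;\geq\; c\int_{\beta}^{cL}\frac{1}{L}\cdot\frac{1}{\rho}\,d\rho \;\geq\; \frac{c}{L}\ln L,
\end{equation}
because in $d=3$ one has $g(y,y')=c|y-y'|^{-1}$ (see (\ref{eq:greenvalue})), and the portion of the wire at distance in $[\rho,2\rho]$ from $y$ carries $\mu$-mass of order $\rho/L$, each point contributing $g\sim 1/\rho$; summing the $\sim\ln L$ dyadic scales between $\beta$ and $L$ gives the logarithm. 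Since this lower bound is uniform in $y\in{\rm supp}(\mu)$, Jensen/Fubini gives ${\cal E}(\mu)\geq c\frac{\ln L}{L}$, hence ${\rm cap}(K)\leq c'\frac{L}{\ln L}$.

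The main obstacle is making the lower bound on $\int_K g(y,y')\mu(dy')$ genuinely uniform over $y\in{\rm supp}(\mu)$, including points near the four corners of the square where the wire is not locally straight; but since the square has bounded geometry (it is a union of four segments meeting at right angles) and we only need a lower bound, one can always find an arc of the wire of length $\sim L$ emanating from $y$ along which the distance to $y$ grows at least linearly in arc-length, which suffices. A secondary technical point is to choose $\mu$ to be genuinely supported inside the $\beta$-sausage (e.g. normalized Lebesgue measure on $K$ itself, of which the thickened-wire picture is a lower bound after restricting to an inner tube), so that the integrals are over the actual compact set $K$; this is routine given $\beta>0$ is fixed. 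Combining the two regimes yields (\ref{eq:salamiupperbound}).
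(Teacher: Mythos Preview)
Your computation is essentially correct, but the logical step invoking Theorem~\ref{Thm4.9} is backwards. That theorem says ${\rm cap}(K)=(\inf_{\mu}\mathcal{E}(\mu))^{-1}$, so a \emph{lower} bound on $\mathcal{E}(\mu)$ for a \emph{single} test measure $\mu$ only yields ${\rm cap}(K)\geq(\mathcal{E}(\mu))^{-1}$, i.e.\ a \emph{lower} bound on capacity --- the wrong direction. To upper-bound the capacity via Theorem~\ref{Thm4.9} one would need to lower-bound the energy of \emph{every} probability measure on $K$, not just the uniform one; your sentence ``it suffices to exhibit a probability measure $\mu$ \ldots\ whose Green energy \ldots\ is at least $c\,\frac{\ln L}{L}$'' is therefore false as stated.

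The repair is immediate and uses exactly the pointwise estimate you already prove. The uniform bound $\int_K g(y,y')\,\mu(dy')\geq c\,\frac{\ln L}{L}$ for all $y\in K$ (with $\mu$ the normalized Lebesgue measure on $K$, so ${\rm Volume}(K)\sim L$) is equivalent to $\inf_{y\in K}\int_K g(y,y')\,dy'\geq c\ln L$, and the second inequality in (\ref{eq:capest}) then gives ${\rm cap}(K)\leq {\rm Volume}(K)/(c\ln L)\leq c'L/\ln L$. This is precisely how the paper proceeds, except that the paper first reduces to a straight tube $B(J_L,\beta)$ by covering the square with four translated/rotated copies and using sub-additivity; this sidesteps your corner issue entirely, since on a straight segment the uniformity of the potential lower bound is immediate (by symmetry one always has a sub-segment of length $L/3$ on one side of $y$). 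Your $d\geq 4$ argument via sub-additivity over $O(L)$ balls is fine and a bit more direct than the paper's use of (\ref{eq:capest}) in that case.
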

\begin{proof}
Since $B(S^{(2)}(x,L),\beta)$ can be covered by four ``tubes'' which are $B(J_{L},\beta)$ after translation and rotation, %(see (\ref{eq:union}), ()) and ???, 
to prove the claim (\ref{eq:salamiupperbound}) it suffices to show
that for all $L>3\beta$, 
\begin{equation}
\mathrm{cap}(B(J_{L},\beta))\leq\begin{cases}
cL & d\geq4\\
\frac{c'L}{\ln L} & d=3.
\end{cases}\label{eq:ubcap}
\end{equation}
Now we prove (\ref{eq:ubcap}). We consider a point 
$x=(x_1,x_2,\ldots,x_d)\in B(J_{L},\beta)$. Without loss of generality we assume that $x_{1}\geq L/2$. 
For all $y=(y_1,y_2,\ldots,y_d)\in B(J_{L},\beta)$ such that $y_1\leq L/3$, we know that
\begin{equation}
|x-y|\leq\sqrt{|x_{1}-y_{1}|^{2}+c\beta^{2}}\leq c'|x_{1}-y_{1}|.\label{eq:down to one dim}
\end{equation}
By (\ref{eq:capest}), to bound the capacity from above it suffices to establish a lower bound on an integral regarding the Green function. We write $D$ for ball of radius $\beta$ centered at $0$ in $(d-1)$ dimensions. We know that
\begin{eqnarray*}
\int_{y\in B(J_{L},\beta)}g(x,y)dy & \geq & \int_{[0,L/3]\times D}g(x,y)dy
  =  \int_{[0,L/3]\times D}c(|x-y|)^{2-d})dy\nonumber \\
 & \stackrel{(\ref{eq:down to one dim})}{\geq} & \int_{[0,L/3]}c'(|x_{1}-y_{1}|)^{2-d})dy_{1}\geq  \begin{cases}
c'' & d\geq4\\
\widetilde{c} \ln L & d=3.
\end{cases}
\end{eqnarray*}
By (\ref{eq:capest}), this readily implies (\ref{eq:ubcap}).
\end{proof}

\subsection{The Dyadic renormalization tree}\label{se:3.1}
In this section we construct the dyadic renormalization tree and state some of its useful properties. For readers' convenience we keep the same notation as in \cite{Shortproof}. 

For $n\geq0$, we write $T_{(n)=}\{1,2\}^{n}$ (in particular, $T_{(0)}=\emptyset$) for the index set of the $n$-th generation on the tree.
We denote by 
\begin{equation}
T_{n}=\bigcup_{k=0}^{n}T_{(k)}
\end{equation}
the dyadic tree of depth $n$. For $0\leq k\leq n$, and a node $m=(\xi_{1},\ldots,\xi_{k})\in T_{(k)}$,
we write 
\begin{equation}
m_{1}=(\xi_{1},\ldots,\xi_{k},1)\textrm{ and }m_{2}=(\xi_{1},\ldots,\xi_{k},2),
\end{equation}
for the two children of $m$ which lie in $T_{(k+1)}.$ Given an integer $L_{0}\geq1$ (we
will specify our choice of $L_{0}$ at the beginning of each subsection) we write down a sequence of scales
\begin{equation}\label{eq:Lnchoice}
L_{n}=L_{0}\cdot6^{n},\ n\geq0.
\end{equation}
For $n\geq0$, we also denote by ${\cal L}_{n}=L_{n}\mathbb{Z}^{d}$
the integer lattice renormalized by $L_{n}$. 

We call ${\cal T}:T_{n}\to\mathbb{Z}^{d}$ (resp.~$F_{Z}$) a proper
embedding of $T_{n}$ into $\mathbb{Z}^{d}$ (resp.~$F_{Z}$) rooted
at $x\in{\cal L}_{n}$ (resp. ${\cal L}_{n}\cap F_{Z}$), if one has
\begin{equation}
{\cal T}(\emptyset)=x,
\end{equation}
plus for all $0\leq k\leq n$, and $m\in T_{(k)}$, 
\begin{equation}
{\cal T}(m)\in{\cal L}_{n-k}\textrm{ (resp. }{\cal L}_{n-k}\cap F_{Z}),
\end{equation}
and moreover for all $0\leq k<n$, and $m\in T_{(k)}$, 
\begin{equation}\label{distanceprop}
|{\cal T}(m_{1})-{\cal T}(m)|_{\infty}=L_{n-k}\textrm{ and }|{\cal T}(m_{2})-{\cal T}(m)|_{\infty}=2L_{n-k}.
\end{equation}
For $x\in{\cal L}_{n}$, one writes $\Lambda_{n,x}$ (resp. $\Lambda_{n,x}^{F}$)
the set of proper embeddings of $T_{n}$ into $\mathbb{Z}^{d}$ (resp.
$F_{Z}$) with root $x$.

%{[}Really needed?{]} 
Now, for the sake of completeness, we quote three lemmas without proof (Lemmas 3.2-3.4 in \cite{Shortproof}) on dyadic trees and their embeddings.

\vspace{0.3cm}
The following lemma counts the total number of embeddings into of
$T_{n}$ into $\mathbb{Z}^{d}$ and $F_Z$.
\begin{lem}\label{set1}
For $d\geq 2$,  $L_{0}\geq1$, $n\geq0$ and $x\in{\cal L}_{n},$ there exists $C=C(d)>0$ (which does not depend on $x$) such that 
\begin{equation}\label{eq:set1}
|\Lambda_{n,x}|=C^{2^{n}-1}.
\end{equation}
In particular, for any $L_0\geq 1$, $n\geq 0$ and $x \in {\cal L}_n \cap F_Z$, $|\Lambda^{F}_{n,x}|=C(2)^{2^{n}-1}$.
\end{lem}
The following lemma shows that if for all $n\geq0$ a $*$-path (see the beginning of Section \ref{notations} for the precise definition) %[CHECK IF YOU REALLY NEED THIS] 
goes
through $S(x,L_{n})$ and $S(x,2L_{n})$, then there is a proper embedding
of $T_{n}$ on $\mathbb{Z}^{d}$ (for all $d\geq 2$) such that
every leaf of this tree sits ``on'' the path. Note that this lemma applies to $F_Z$ as well for it is equivalent to the case of $d=2$.
\begin{lem}\label{set2}
For any $L_{0}\geq1$, $n\geq0$ and $x\in{\cal L}_{n}$, if $\gamma$ is a $*$-path (and in particular a nearest neighbor path) in $\mathbb{Z}^{d}$, $d\geq2$, 
such that 
\begin{equation}
\gamma\cap S(x,L_{n}-1)\neq\emptyset\textrm{ and }\gamma\cap S(x,2L_{n})\neq\emptyset,
\end{equation}
then there exists ${\cal T}\in\Lambda_{n,x}$ such that 
\begin{equation}\label{eq:set2conclu}
\gamma\cap S({\cal T}(m),L_{0}-1)\neq\emptyset\,\textrm{ for all }m\in T_{(n)}.
\end{equation}
\end{lem}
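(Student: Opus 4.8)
The plan is to prove the statement by induction on $n$, following the scheme of the analogous renormalisation lemma for random interlacements recalled from \cite{Shortproof}. The base case $n=0$ is immediate: $T_{(0)}$ consists of the root alone, and the unique embedding $\mathcal{T}(\emptyset)=x$ satisfies (\ref{eq:set2conclu}) because $\gamma\cap S(x,L_0-1)\neq\emptyset$ by hypothesis. For the inductive step I would assume the statement for $n-1$ (for an arbitrary root in $\mathcal{L}_{n-1}$ and an arbitrary $*$-path) and proceed as follows.

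First I would extract a clean crossing of the $l^\infty$-annulus between radii $L_n$ and $2L_n$. After possibly reversing $\gamma$, we may assume its first visit to $S(x,L_n-1)$ precedes its first visit to $S(x,2L_n)$; then the portion $\gamma'$ of $\gamma$ running from the last visit to $S(x,L_n-1)$ before that first visit to $S(x,2L_n)$ goes from some $a\in S(x,L_n-1)$ to some $b\in S(x,2L_n)$ and satisfies $L_n-1\le|y-x|_\infty\le 2L_n$ for every $y$ on $\gamma'$; here one uses that along a $*$-path the integer $|\gamma(k)-x|_\infty$ changes by at most $1$ per step, so no intermediate radius is skipped. Next I would place the two children. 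Using $L_n=6L_{n-1}$ together with $x\in\mathcal{L}_n\subset\mathcal{L}_{n-1}$, the set $\mathcal{L}_{n-1}\cap S(x,L_n)$ (resp.~$\mathcal{L}_{n-1}\cap S(x,2L_n)$) meets each face of the corresponding $l^\infty$-cube in a sub-grid of mesh $L_{n-1}$; hence one can choose $y_1\in\mathcal{L}_{n-1}\cap S(x,L_n)$ with $|y_1-a|_\infty\le\max(1,L_{n-1}/2)$ and $y_2\in\mathcal{L}_{n-1}\cap S(x,2L_n)$ with $|y_2-b|_\infty\le\max(1,L_{n-1}/2)$. Whenever $L_{n-1}\ge 2$ this bound is at most $L_{n-1}-1$; the only case with $L_{n-1}=1$, namely $L_0=1$ and $n=1$, is handled directly, since then $\gamma$ also meets $S(x,L_1)$ and $S(x,2L_1)$ and any two such intersection points can be taken as $\mathcal{T}(1),\mathcal{T}(2)$ (recall $S(\cdot,L_0-1)$ is a singleton there). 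So assume $L_{n-1}\ge 2$ and set $\mathcal{T}(\emptyset)=x$, $\mathcal{T}(1)=y_1$, $\mathcal{T}(2)=y_2$.

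Then I would verify that $\gamma$ crosses the small annuli around $y_1$ and $y_2$. Since $\gamma'$ starts at $a$ with $|a-y_1|_\infty\le L_{n-1}-1$ and ends at $b$ with $|b-y_1|_\infty\ge|b-x|_\infty-|y_1-x|_\infty=6L_{n-1}>2L_{n-1}$, the $*$-path $\gamma\supseteq\gamma'$ meets both $S(y_1,L_{n-1}-1)$ and $S(y_1,2L_{n-1})$; symmetrically, as $|a-y_2|_\infty\ge 2L_n-(L_n-1)=6L_{n-1}+1>2L_{n-1}$ and $|b-y_2|_\infty\le L_{n-1}-1$, $\gamma$ meets $S(y_2,L_{n-1}-1)$ and $S(y_2,2L_{n-1})$. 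Applying the induction hypothesis to $\gamma$ with roots $y_1$ and $y_2$ produces proper embeddings $\mathcal{T}_1\in\Lambda_{n-1,y_1}$ and $\mathcal{T}_2\in\Lambda_{n-1,y_2}$ with $\gamma\cap S(\mathcal{T}_i(m'),L_0-1)\neq\emptyset$ for all $m'\in T_{(n-1)}$. I would then glue: let $\mathcal{T}$ agree with $\mathcal{T}_1$ on the sub-tree below the first child of the root and with $\mathcal{T}_2$ on the sub-tree below the second child. One checks $\mathcal{T}\in\Lambda_{n,x}$: the root lies in $\mathcal{L}_n$; $|\mathcal{T}(1)-x|_\infty=L_n$ and $|\mathcal{T}(2)-x|_\infty=2L_n$ hold by construction; and the remaining instances of (\ref{distanceprop}) and of the memberships $\mathcal{T}(m)\in\mathcal{L}_{n-|m|}$ are inherited from $\mathcal{T}_1$ and $\mathcal{T}_2$. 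Since every leaf in $T_{(n)}$ lies in one of the two sub-trees, its image is some $\mathcal{T}_i(m')$ with $m'\in T_{(n-1)}$, whence $\gamma\cap S(\mathcal{T}(m),L_0-1)\neq\emptyset$, which is (\ref{eq:set2conclu}).

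The part I expect to be the main obstacle is the geometric step: realising $y_1$ and $y_2$ as points of $\mathcal{L}_{n-1}$ on the two $l^\infty$-spheres around $x$ while keeping them close enough to $a$ and $b$, and then checking that the single $*$-path $\gamma$ genuinely passes through the small annuli around both children. This is exactly where the growth rate $L_n=L_0\cdot 6^n$ and the mesh relation $\mathcal{L}_n\subset\mathcal{L}_{n-1}$ enter; the rest is bookkeeping about proper embeddings.
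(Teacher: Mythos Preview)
The paper does not give its own proof of this lemma: it is quoted without proof from \cite{Shortproof} (Lemma~3.3 there). Your inductive argument is correct and is precisely the standard proof one finds in that reference --- extract a crossing of the annulus, place the two children $y_1,y_2$ on the $\mathcal{L}_{n-1}$-grid of the inner and outer $l^\infty$-spheres near the endpoints of the crossing, verify the small annuli around $y_1,y_2$ are crossed, and recurse --- so there is nothing to compare.
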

To state the next lemma we need extra notations. For $0\leq k\leq n$
and $m=(\xi_{1},\ldots,\xi_{n})\in T_{(n)}$, we denote by $m|_{k}$
the projection $(\xi_{1},\ldots,\xi_{k})\in T_{(k)}$. For $m,m'\in T_{(n)}$,
we define the lexical distance between $m$ and $m'$ through
\begin{equation}
\rho(m,m')=\min\{k\geq0:m|_{n-k}=m'|_{n-k}\}.
\end{equation}
 For any $m\in T_{(n)}$ we denote by 
\begin{equation}
T_{(n)}^{m,k}=\{m'\in T_{(n)}:\rho(m,m')=k\}
\end{equation}
all the leaves with lexical distance $k$ from $m$. Note that for all
$1\leq k\leq n$, 
\begin{equation}\label{numeachlevel}
|T_{(n)}^{m,k}|=2^{k-1}.
\end{equation}
 The following lemma shows that a proper embedding is relatively ``spread-out'' on all scales and will be used in the proofs of Propositions \ref{prop:ubbi} and \ref{prop:lb}.
\begin{lem}\label{spreadout}
For all $n\ge1$, $x\in{\cal L}_{n}$, ${\cal T}\in\Lambda_{n,x},$
$m\in T_{(n)}$, $k\geq1$, and for all $m'\in T_{(n)}^{m,k}$, $y\in S({\cal T}(m),L_{0}-1),$
$z\in S({\cal T}(m'),L_{0}-1)$, one has
\begin{equation}\label{eq:spreadout}
|y-z|\geq L_{k-1}.
\end{equation}

\end{lem}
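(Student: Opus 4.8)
This is a purely metric statement about proper embeddings, so I expect no real difficulty; the only delicate point is controlling how far the image of a leaf can drift from the image of an ancestor, and checking that the separation surviving after subtracting this drift still dominates $L_{k-1}$ --- this is exactly where the growth rate $L_n = L_0\cdot 6^n$ enters.

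The plan is as follows. Fix $m,m'\in T_{(n)}$ with $\rho(m,m')=k\ge 1$; let $\bar m = m|_{n-k}=m'|_{n-k}\in T_{(n-k)}$ be their deepest common ancestor, and let $a = m|_{n-k+1}$ and $b = m'|_{n-k+1}$ be the two (necessarily distinct) children of $\bar m$ whose subtrees contain $m$ and $m'$ respectively, so that $\{a,b\}=\{\bar m_1,\bar m_2\}$. I would first apply (\ref{distanceprop}) at $\bar m$, which gives $|\mathcal T(\bar m_1)-\mathcal T(\bar m)|_\infty = L_k$ and $|\mathcal T(\bar m_2)-\mathcal T(\bar m)|_\infty = 2L_k$, hence by the triangle inequality the separation of the two branches at this scale satisfies $|\mathcal T(a)-\mathcal T(b)|_\infty \ge 2L_k-L_k = L_k$.

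Next I would estimate the drift of $\mathcal T(m)$ away from $\mathcal T(a)$ along the branch $a=m|_{n-k+1}, m|_{n-k+2},\dots,m|_n=m$: by (\ref{distanceprop}) the image moves by at most $2L_{n-j}$ in $|\cdot|_\infty$ at the passage from generation $j$ to generation $j+1$, so telescoping,
\begin{equation}
|\mathcal T(m)-\mathcal T(a)|_\infty\ \le\ \sum_{i=1}^{k-1}2L_i\ =\ \frac{2L_0(6^k-6)}{5}\ =\ \frac{2}{5}(L_k-L_1)\ <\ \frac{2}{5}L_k,
\end{equation}
and likewise $|\mathcal T(m')-\mathcal T(b)|_\infty<\tfrac{2}{5}L_k$ (for $k=1$ the sum is empty, consistently with $a=m$). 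Since $y\in S(\mathcal T(m),L_0-1)$ and $z\in S(\mathcal T(m'),L_0-1)$ yield $|y-\mathcal T(m)|_\infty,|z-\mathcal T(m')|_\infty\le L_0-1$, a final triangle inequality gives
\begin{equation}
|y-z|_\infty\ \ge\ L_k-\tfrac{4}{5}(L_k-L_1)-2(L_0-1)\ =\ \tfrac{1}{5}L_k+\tfrac{4}{5}L_1-2L_0+2,
\end{equation}
which is $\ge\tfrac{1}{5}L_k\ge L_{k-1}$, since $L_1=6L_0$ (so $\tfrac45 L_1-2L_0\ge0$) and $\tfrac15 L_k=\tfrac65 L_{k-1}$. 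As $|y-z|\ge|y-z|_\infty$, this proves (\ref{eq:spreadout}); the argument is verbatim the same for embeddings into $F_Z$. The one comparison I would double-check is $\tfrac15 L_k\ge L_{k-1}$, i.e.\ $\tfrac15\ge\tfrac16$ --- the surviving fraction $\tfrac15 L_k$ of the ancestor-scale separation $L_k$ must beat the target $L_{k-1}=\tfrac16 L_k$, and this is the sole place where the specific value of the base $6$ (not merely ``$6$ large enough'') is used.
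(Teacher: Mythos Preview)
Your proof is correct. The paper does not actually prove this lemma: it explicitly states that Lemmas~\ref{set1}--\ref{spreadout} are quoted without proof from \cite{Shortproof}, so there is no in-paper argument to compare against. Your telescoping drift bound $\sum_{i=1}^{k-1}2L_i=\tfrac{2}{5}(L_k-L_1)$ together with the reverse triangle inequality is precisely the standard way this estimate is obtained, and the numerics (the surviving $\tfrac15 L_k=\tfrac65 L_{k-1}\ge L_{k-1}$, with the remaining terms nonnegative since $\tfrac45 L_1=\tfrac{24}{5}L_0\ge 2L_0$) are all correct.
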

\subsection{Preliminary results for the upper bound on the threshold}\label{se:3.2}
In this subsection we prepare all the ingredients for the proof of the first part of (\ref{eq:pt}), namely the finiteness of the percolation threshold $\alpha^*_1$.
%some crucial results for the proof of Theorem \ref{thm:phase transition}. 
%From now on we fix
%\begin{equation}
%\beta=\sqrt{d}/2+2,\label{eq:betachoice}
%\end{equation}
We fix, only in this subsection,
\begin{equation}
L_{0}=1.\label{eq:L0choiceub}
\end{equation}
We now assign symbols to the crossing events that we consider in this subsection. For all $\alpha>0$, and $n\in\mathbb{N}$, we write (see the beginning of Section \ref{notations} for the definition of a continuous path and (\ref{eq:betachoice}) for the definition of $\beta$)
\begin{equation}\label{Analphadef}
A_{n}^{\alpha}=\{\exists\textrm{ a continuous path in }{\cal V}_{\beta}^{\alpha}\textrm{ connecting }B_\infty(0,L_{n}-1)\textrm{ and }\partial B_\infty(0,2L_{n})\},
\end{equation}
and 
\begin{equation}\label{eq:Dandef}
D_{n}^{\alpha}=\{\exists \textrm{ nearest-neighbour path in }{\cal V}_{1}^{\alpha}\cap\mathbb{Z}^d\textrm{ connecting }S(0,L_{n}-1)\textrm{ and }S(0,2L_{n})\}.
\end{equation}%\ensuremath{\widetilde{\gamma}}}
The following lemma shows that actually $A_{n}^{\alpha}$ is almost surely contained
in $D_{n}^{\alpha}$. As it is almost obvious, we omit its proof.
\begin{lem}
\label{lem:reducetostar}For all $\alpha>0$, and $n\in\mathbb{N}$, 
\begin{equation}\label{inclu}
A_{n}^{\alpha}\subseteq D_{n}^{\alpha},
\end{equation}
and hence
\begin{equation}
\mathbb{P}[A_{n}^{\alpha}]\leq\mathbb{P}[D_{n}^{\alpha}].\label{eq:reducetostar}
\end{equation}
\end{lem}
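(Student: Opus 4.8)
The plan is to turn a continuous vacant crossing into a discrete one by sampling and rounding onto $\mathbb{Z}^d$. Fix $\alpha>0$ and $n\in\mathbb{N}$ and argue on the event $A_n^\alpha$; recall that here $L_0=1$, so $L_n=6^n$ and the radii $L_n-1$ and $2L_n$ are nonnegative integers. Take a continuous path $\gamma:[0,1]\to{\cal V}_\beta^\alpha$ with $\gamma(0)\in B_\infty(0,L_n-1)$ and $\gamma(1)\in\partial B_\infty(0,2L_n)$, and write $\omega=\sum_{i\ge0}\delta_{w_i}$ for the underlying interlacement point measure at level $\alpha$. The one quantitative input is that $\gamma(t)\notin{\cal I}_\beta^\alpha=\bigcup_{i,s}B(w_i(s),\beta)$ (see (\ref{BIequivdef})) forces $|\gamma(t)-w_i(s)|>\beta=2\sqrt d+4$ for all $t\in[0,1]$, all $i\ge0$ and all $s\in\mathbb{R}$.

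First I would use uniform continuity of $\gamma$ on $[0,1]$ to choose a partition $0=s_0<s_1<\dots<s_N=1$ with $|\gamma(s_{j+1})-\gamma(s_j)|<1$ for every $j$, and then, via (\ref{latticeobservation}), round each $\gamma(s_j)$ coordinate-wise to a nearest lattice point $b_j\in\mathbb{Z}^d$, so that $|b_j-\gamma(s_j)|\le\sqrt d/2$. Next I would join consecutive $b_j,b_{j+1}$ by a nearest-neighbor lattice path that changes one coordinate at a time, staying inside the coordinate box spanned by $b_j$ and $b_{j+1}$; any vertex $v$ of this interpolation then satisfies $|v-b_j|\le|b_{j+1}-b_j|<\sqrt d+1$, hence $|v-\gamma(s_j)|<\tfrac32\sqrt d+1$, and therefore $|v-w_i(s)|>\beta-(\tfrac32\sqrt d+1)=\tfrac12\sqrt d+3>1$ for all $i,s$, so $v\notin{\cal I}_1^\alpha$, i.e. $v\in{\cal V}_1^\alpha$. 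Concatenating these interpolations yields a nearest-neighbor path $\pi$ lying entirely in ${\cal V}_1^\alpha\cap\mathbb{Z}^d$. This is precisely the purpose of the choice $\beta=2\sqrt d+4$: it simultaneously absorbs the rounding error $\sqrt d/2$, the interpolation error, and still leaves a margin strictly above the radius $1$.

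It remains to check that $\pi$ realizes $D_n^\alpha$, i.e. that it meets the discrete spheres $S(0,L_n-1)$ and $S(0,2L_n)$ exactly and not just approximately --- this is the only point that requires a little care, the rest being routine. Since $2L_n\in\mathbb{Z}$ and some coordinate of $\gamma(1)$ has modulus exactly $2L_n$ while the others have modulus $\le2L_n$, coordinate-wise rounding gives $|b_N|_\infty=2L_n$, i.e. $b_N\in S(0,2L_n)$; and since $|\gamma(0)|_\infty\le L_n-1\in\mathbb{Z}$, rounding keeps $|b_0|_\infty\le L_n-1$. As the $\ell^\infty$-norm of the vertices of $\pi$ changes by at most $1$ at each step and runs from a value $\le L_n-1$ to $2L_n$, some vertex $v^\ast$ of $\pi$ has $|v^\ast|_\infty=L_n-1$, i.e. $v^\ast\in S(0,L_n-1)$; the portion of $\pi$ between $v^\ast$ and $b_N$ then connects $S(0,L_n-1)$ to $S(0,2L_n)$ inside ${\cal V}_1^\alpha\cap\mathbb{Z}^d$. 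This proves $A_n^\alpha\subseteq D_n^\alpha$, and (\ref{eq:reducetostar}) follows at once by monotonicity of $\mathbb{P}$.
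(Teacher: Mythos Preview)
Your proof is correct and follows the same strategy as the paper: discretize the continuous vacant path onto $\mathbb{Z}^d$ and use the slack built into $\beta=2\sqrt d+4$ to absorb the rounding and interpolation errors, leaving a nearest-neighbor lattice path in $\mathcal{V}_1^\alpha$. The paper's version differs only tactically --- it records the $*$-connected set $G=\{z\in\mathbb{Z}^d:B_\infty(z,1/2)\cap\gamma\neq\emptyset\}\subset\mathcal{V}_{\beta/2}^\alpha$, extracts a $*$-path in $G$ between the two spheres, and then refines it to a nearest-neighbor path --- whereas you sample $\gamma$ at finely spaced times, round, and interpolate directly; your endpoint check via the intermediate-value property of $|\cdot|_\infty$ along a nearest-neighbor path is also a bit more explicit than the paper's.
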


We now state and prove the main result in this subsection. %We postpone the proof to the appendix due to its similarity with the proof of Proposition 4.1 in \cite{Shortproof}.
\begin{prop}
\label{prop:ubbi}There exists $\alpha^{\#}>0$, such that for all
$\alpha>\alpha^{\#}$, and for any $n\geq1$, 
\begin{equation}
\mathbb{P}[A_{n}^{\alpha}]\leq\left(\frac{1}{2}\right)^{2^{n}}.\label{eq:A_nub}
\end{equation}
\end{prop}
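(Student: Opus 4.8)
The plan is to follow the ``energy-entropy competition'' strategy outlined in the introduction. By Lemma \ref{lem:reducetostar}, it suffices to bound $\mathbb{P}[D_n^\alpha]$, where $D_n^\alpha$ is the event that a nearest neighbor path in $\mathcal{V}_1^\alpha\cap\mathbb{Z}^d$ connects $S(0,L_n-1)$ and $S(0,2L_n)$. First I would invoke Lemma \ref{set2} (with $L_0=1$ as fixed in this subsection): on $D_n^\alpha$, for the crossing $*$-path $\gamma$ there exists a proper embedding $\mathcal{T}\in\Lambda_{n,0}$ of the dyadic tree $T_n$ rooted at the origin such that $\gamma\cap S(\mathcal{T}(m),L_0-1)\neq\emptyset = \gamma\cap\{\mathcal{T}(m)\}$ for every leaf $m\in T_{(n)}$. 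In particular, for every leaf $m$ the point $\mathcal{T}(m)$ lies on $\gamma\subset\mathcal{V}_1^\alpha$, i.e. $\mathcal{I}_1^\alpha$ avoids the set $K_\mathcal{T}=\{\mathcal{T}(m):m\in T_{(n)}\}$ of the $2^n$ images of the leaves. Hence
\begin{equation}
\mathbb{P}[D_n^\alpha]\leq\sum_{\mathcal{T}\in\Lambda_{n,0}}\mathbb{P}[\mathcal{I}_1^\alpha\cap K_\mathcal{T}=\emptyset]=\sum_{\mathcal{T}\in\Lambda_{n,0}}e^{-\alpha\,\mathrm{cap}(B(K_\mathcal{T},1))},
\end{equation}
using the explicit formula (\ref{simplechar}).

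The entropy side is immediate from Lemma \ref{set1}: $|\Lambda_{n,0}|=C^{2^n-1}\leq C^{2^n}$. The energy side is the crux: I need a lower bound $\mathrm{cap}(B(K_\mathcal{T},1))\geq c\,2^n$ uniform over all proper embeddings $\mathcal{T}$. This is where the spread-out property of Lemma \ref{spreadout} enters. The $2^n$ points $\{\mathcal{T}(m)\}_{m\in T_{(n)}}$ are well-separated: any two leaves $m,m'$ with lexical distance $k$ have $|\mathcal{T}(m)-\mathcal{T}(m')|\geq L_{k-1}=6^{k-1}$, and by (\ref{numeachlevel}) there are exactly $2^{k-1}$ leaves at lexical distance $k$ from a fixed leaf. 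To get the capacity lower bound I would use the variational characterization (Theorem \ref{Thm4.9}): take $\mu$ to be the uniform probability measure on $K_\mathcal{T}$ (mass $2^{-n}$ on each point, or rather on a unit ball around each point so that $B(K_\mathcal{T},1)$ is used), and estimate the energy
\begin{equation}
\int g\,d\mu\,d\mu = 2^{-2n}\sum_{m,m'\in T_{(n)}} g(\mathcal{T}(m),\mathcal{T}(m'))\ \ (\text{with the diagonal replaced by a bounded term, }\mathrm{cap}(B(x,1))^{-1}\sim c).
\end{equation}
Splitting the sum according to the lexical distance $k=\rho(m,m')$ and using $g(x,y)\leq c|x-y|^{2-d}\leq c\,6^{(k-1)(2-d)}$ together with $|T_{(n)}^{m,k}|=2^{k-1}$, the off-diagonal contribution is at most $2^{-2n}\cdot 2^n\sum_{k\geq1}2^{k-1}c\,6^{(k-1)(2-d)} = c'2^{-n}\sum_{k\geq1}(2\cdot 6^{2-d})^{k-1}$, which converges since $d\geq 3$ gives $2\cdot 6^{2-d}\leq 2/6<1$. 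The diagonal term contributes $2^{-2n}\cdot 2^n\cdot c = c2^{-n}$. Altogether $\int g\,d\mu\,d\mu\leq C 2^{-n}$, hence $\mathrm{cap}(B(K_\mathcal{T},1))\geq c\,2^n$.

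Combining, $\mathbb{P}[A_n^\alpha]\leq\mathbb{P}[D_n^\alpha]\leq C^{2^n}e^{-c\alpha 2^n}=(Ce^{-c\alpha})^{2^n}$, and choosing $\alpha^\#$ large enough that $Ce^{-c\alpha^\#}\leq 1/2$ yields (\ref{eq:A_nub}) for all $\alpha>\alpha^\#$ and all $n\geq1$. The main obstacle I anticipate is making the capacity lower bound genuinely uniform over embeddings and correctly handling the ``thickening'' $B(\cdot,1)$ versus the discrete point set (one must check that placing mass on unit balls around the $\mathcal{T}(m)$ rather than on the points themselves does not degrade the energy estimate — this is harmless since the balls around distinct leaves are disjoint, as $L_0-1=0$ forces the separation $|\mathcal{T}(m)-\mathcal{T}(m')|\geq L_0=1$... in fact one needs the scales $L_n$ large enough relative to $\beta$; but with $L_0=1$ one should double-check the smallest-scale separation, possibly by working at scale $L_0$ from the outset or absorbing a bounded factor). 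The convergence of the geometric series is the key structural point that makes the separation ``just barely'' enough, and it is exactly the reason the dyadic (rather than a fatter) tree is used.
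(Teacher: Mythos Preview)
Your proposal is correct and follows essentially the same route as the paper: reduce to $D_n^\alpha$ via Lemma~\ref{lem:reducetostar}, apply Lemma~\ref{set2} (with $L_0=1$, so $S(\mathcal{T}(m),0)=\{\mathcal{T}(m)\}$) to find an embedding whose leaf images lie on the vacant path, bound the number of embeddings by Lemma~\ref{set1}, and use (\ref{simplechar}) together with a capacity lower bound $\mathrm{cap}\big(\bigcup_m B(\mathcal{T}(m),1)\big)\geq c\,2^n$ obtained by summing Green-function contributions according to the lexical distance and invoking Lemma~\ref{spreadout}. The only cosmetic difference is that the paper packages the capacity lower bound through the inequality (\ref{eq:capest}) (normalized Lebesgue measure on $\mathcal{X}_\mathcal{T}$), whereas you invoke Theorem~\ref{Thm4.9} directly with the uniform measure on the leaf balls---these are the same estimate; your concern about the smallest-scale separation is moot, since (\ref{distanceprop}) forces siblings in $T_{(n)}$ to be at $\ell^\infty$-distance at least $L_1=6$, so the unit balls are genuinely disjoint.
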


\begin{proof}%[Proof of Proposition \ref{prop:ubbi}]
Recall that $L_0=1$.
By Lemma \ref{spreadout}, on the event $D_{n}^{\alpha}$, there is at least one proper embedding of $T_{n}$, which we denote by $\widehat{{\cal T}}\in \Lambda_{n,0}$, such that there is a nearest neighbor path $\widetilde{\gamma}:[0,\ldots,M]\to\mathbb{Z}^d$, for some $M\in\mathbb{N}$, which lies in $\mathcal{V}^\alpha_1$, i.e., $\widetilde{\gamma}([0,\ldots,M])\in\mathcal{V}^\alpha_1\cap\mathbb{Z}^d$, that connects $S(0,L_{n}-1)$ to $S(0,2L_n)$ (see the definition
of $D_{n}^{\alpha}$ in (\ref{eq:Dandef})) and passes through all the points in $\widehat{{\cal T}}$.
This implies that $\cup_{m\in T_{(n)}}\widehat{{\cal T}}(m)$ is not touched
by ${\cal I}_{1}^{\alpha}$. 
By (\ref{simplechar}), one has that
\begin{equation} 
\mathbb{P}[\cup_{m\in T_{(n)}}\widehat{{\cal T}}(m)\cap{\cal I}_{1}^{\alpha}=\emptyset]=\exp(-\alpha\mathrm{cap}({\cal X}_{
\widehat{\cal T}}))
\end{equation}
where for $\mathcal{T}\in\Lambda_{n,0}$ we define
\begin{equation}
{\cal X}_{{\cal T}}=\cup_{m\in T_{(n)}}B({\cal T}(m),1).
\end{equation}
%by the basic property of Brownian interlacements, we know that, by
%combining (the lemma above), that 
By Lemma \ref{set1}, there are at most $C^{2^{n}}$ possible embeddings of $T_{n}$, %or equivalently, at most $C^{2^{n}}$ possible choices of $\widehat{\cal T}$. 
hence we know that
\begin{equation}
\mathbb{P}[A_{n}^{\alpha}]\stackrel{(\ref{eq:reducetostar})}{\leq}\mathbb{P}[D_{n}^{\alpha}]\leq C^{2^{n}}\max_{{\cal T}\in\Lambda_{n,0}}\exp(-\alpha\mathrm{cap}({\cal X}_{{\cal T}})).\label{eq:A_nprelimbound}
\end{equation}
%To prove  it suffices to bound the right-hand
%term of (\ref{eq:A_nprelimbound}). 
Now, we claim that (\ref{eq:A_nub}) is proved if we can prove that for all ${\cal T}\in\Lambda_{n,0}$,
\begin{equation}
{\rm cap}({\cal X}_{{\cal T}})\geq c2^{n},\label{eq:capXT1lb}
\end{equation}
uniformly for some $c>0$. This is because with
(\ref{eq:capXT1lb}), if one chooses a sufficiently large $\alpha^{\#}>0$, then for all $\alpha >\alpha ^\#$, 
\begin{equation}
\mathbb{P}[A_{n}^{\alpha}]\stackrel{(\ref{eq:A_nprelimbound})}{\leq}(C\exp(-c\alpha))^{2^{n}}\leq(1/2)^{2^{n}}.
\end{equation}
%with the help of (\ref{eq:A_nprelimbound}), 
%\begin{equation}
%\alpha^{\#}\geq-\log(\frac{C}{2})/c.
%\end{equation}

Now we prove (\ref{eq:capXT1lb}). Thanks to  (\ref{eq:capest}) and the fact that $|{\cal X_T}|=c2^n$,
to give a lower bound on the capacity of $\cal{X_T}$ it suffices to bound from above the denominator of the fraction in the first term of (\ref{eq:capest}), i.e.~an integral of the Green function. In fact, for all $m\in T_{(n)}$ and $x\in B(\mathcal{T}(m),1)$ (note that such $x$ runs over $\cal{X_T}$), we have
%$$\max_{x\in X} \int_{y\in X} g(x,y)dy$$
%
\begin{eqnarray}
 \int_{y\in {\cal X_T}} g(x,y)dy& =& \sum_{m'\in T_{(n)}}\int_{B(0,1)}g({\cal T}(m)+x,{\cal T}(m')+y)dy\nonumber \\
 & = & \sum_{k=0}^{n}\sum_{m'\in T_{(n)}^{m,k}}\int_{B(0,1)}g({\cal T}(m)+x,{\cal T}(m')+y)dy\\
 &  \stackrel{(\ref{eq:spreadout})}{\leq}& c+c'\sum_{k=1}^{n}L_{k-1}^{2-d}\big|T_{(n)}^{m,k}\big| \stackrel{(\ref{numeachlevel}),(\ref{eq:Lnchoice})}{\leq}c''.\nonumber 
\end{eqnarray}
This finishes the proof of (\ref{eq:capXT1lb}) as well as (\ref{eq:A_nub}).
\end{proof}

\subsection{Preliminary results for the lower bound on the threshold}
\label{se:3.3}%he positiveness of $\alpha_1^*$}
In this subsection we prove some preparatory results for proof of the second
half of (\ref{eq:pt}), namely the positiveness of $\alpha_{1}^{*}$. We postpone the choice of $L_0$ until the end of the proof of Proposition \ref{prop:lb}.

We now assign symbols to the crossing events we consider in this subsection. For $\alpha>0$
and $n$ in $\mathbb{N}$, we note the following event (recall the definition of $L_n$ in (\ref{eq:Lnchoice}))
\begin{equation}
\widehat{A}_n^{\alpha}=\{\textrm{there is a continuous path in }{\cal V}_{1}^{\alpha}\textrm{ connecting }B_{\infty}(0,L_{n})\textrm{ and infinity}\}.
\end{equation}
Here  by ``a continuous path in ${\cal V}_{1}^{\alpha}$ connecting $B_{\infty}(0,L_{n})$ and infinity'' we mean a continuous $\gamma:[0,\infty)\to \mathbb{R}^d$ such that $\gamma[0,\infty)\subset {\cal V}_{1}^{\alpha}$ and $\limsup_{t\to\infty}|\gamma(t)|=\infty$.

We also define the following events for $x\in\mathbb{R}^d$, $\alpha>0$ and $k$ in
$\mathbb{N}$, (recall the definition of $F$ in (\ref{eq:Fdef})), 
\begin{equation}
B_{k,x}^{\alpha}=\{\textrm{there is a continuous path in }{\cal I}_{1}^{\alpha}\cap F\textrm{ connecting }B_{\infty}(x,L_{k})\textrm{ and }\partial B_{\infty}(x,2L_{k})\}
\end{equation}
and for all $x\in F_Z$ (recall the definition of $S^{(2)}$ in (\ref{eq:S2def})), % [SAY STH ABOUT DIMENSION WHEN WE DEFINE IT-$S^{(2)}$ is two dim object in $d$ dim !]
\begin{equation}
\overline{B}_{k,x}^{\alpha}=\{\exists\textrm{ nearest-neighbour path in }{\cal I}_{\beta}^{\alpha}\cap F_{Z}\textrm{ connecting }S^{(2)}(x,L_{k}-1)\textrm{ and }S^{(2)}(x,2L_{k})\}.
\end{equation}

The next lemma shows that almost surely $B_{k,x}^{\alpha}$ is contained in $\overline{B}_{k,x}^{\alpha}$.  We omit its proof due to similarity with the proof of Lemma \ref{lem:reducetostar}. 
\begin{lem}\label{Binclusion}
For all $\alpha>0$,
$k\in\mathbb{N}$, $x\in F_Z$ one has that 
\begin{equation}\label{eq:Bincl}
B_{k,x}^{\alpha}\subset\overline{B}_{k,x}^{\alpha}.
\end{equation}
\end{lem}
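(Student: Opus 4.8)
The plan is to transport the proof of Lemma~\ref{lem:reducetostar} into the slab, with two changes of role: the vacant set is replaced by the interlacement set, and the sausage radius now \emph{grows} from $1$ to $\beta$ instead of shrinking. Since $\mathcal{I}^\alpha_1\subseteq\mathcal{I}^\alpha_\beta$, this enlargement is precisely what pays for the discretization error, and the value $\beta=2\sqrt{d}+4$ of (\ref{eq:betachoice}) was chosen with ample room to spare. It suffices to prove the inclusion (\ref{eq:Bincl}).

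On the event $B_{k,x}^\alpha$ there is a continuous path $\gamma:[0,1]\to F$ with $\gamma([0,1])\subseteq\mathcal{I}_1^\alpha\cap F$ that crosses, inside the two-plane $F\cong\mathbb{R}^2$, the $l^\infty$-annulus of radii $L_k$ and $2L_k$ about $x$ (in particular $\gamma$ meets the $l^\infty$-spheres $\{|\cdot-x|_\infty=L_k\}$ and $\{|\cdot-x|_\infty=2L_k\}$). Working entirely inside $F$ and the lattice slab $F_Z=\mathbb{Z}^2\times\{0\}^{d-2}$, I would set
\[
G=\{z\in F_Z:\ B_\infty(z,1/2)\cap\gamma\neq\emptyset\},
\]
which is nonempty and whose unit $l^\infty$-boxes cover $\gamma$. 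Recalling that $\mathcal{I}^\alpha_r$ is the $r$-sausage $B(\mathcal{I}^\alpha_0,r)$ of the trace $\mathcal{I}^\alpha_0$ (see (\ref{eq:standardcons2})), every $z\in G$ lies within Euclidean distance $\tfrac{\sqrt{d}}{2}$ of a point of $\gamma\subseteq B(\mathcal{I}^\alpha_0,1)$, hence at distance at most $1+\tfrac{\sqrt{d}}{2}\le\beta$ from $\mathcal{I}^\alpha_0$, so that $G\subseteq\mathcal{I}^\alpha_\beta\cap F_Z$.

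Next I would check, exactly as in Lemma~\ref{lem:reducetostar}, that $G$ is $*$-connected in $F_Z$ (because $\bigcup_{z\in F_Z}B_\infty(z,1/2)=F$, $*$-neighboring boxes are the only ones that meet, and $\gamma$ is connected), and that $G$ meets both $S^{(2)}(x,L_k-1)$ and $S^{(2)}(x,2L_k)$ --- by the same $l^\infty$-rounding together with a discrete intermediate-value argument along a $*$-path in $G$ as in Lemma~\ref{lem:reducetostar} (the unit of slack between $L_k$ and $L_k-1$ absorbing the rounding error, just as between $B_\infty(0,L_n-1)$ and $S(0,L_n-1)$ there). Extracting a $*$-path $\gamma'$ in $G$ joining such vertices and inserting, between each pair of successive vertices, the single ``corner'' vertex that breaks a diagonal step into two nearest neighbor steps, produces a nearest neighbor path in $F_Z$ from $S^{(2)}(x,L_k-1)$ to $S^{(2)}(x,2L_k)$; the inserted corners remain in $F_Z$ (flipping one of the two nonzero coordinates of a planar $*$-neighbor keeps the last $d-2$ coordinates equal to $0$) and lie within Euclidean distance $1$ of $G$, hence within $2+\tfrac{\sqrt{d}}{2}\le\beta$ of $\mathcal{I}^\alpha_0$, so the whole modified path still lies in $\mathcal{I}^\alpha_\beta\cap F_Z$. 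This realizes $\overline{B}_{k,x}^\alpha$ and gives (\ref{eq:Bincl}).

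None of these steps is a genuine obstacle --- the argument is essentially identical to that of Lemma~\ref{lem:reducetostar}. The two points that require a little care are (i) confirming that the planar-slab structure $F_Z\subseteq\mathbb{Z}^2\times\{0\}^{d-2}$ survives the corner-insertion step, and (ii) the bookkeeping of radii, i.e.\ matching the continuous scales $L_k,2L_k$ to the discrete squares $S^{(2)}(x,L_k-1),S^{(2)}(x,2L_k)$, both of which reduce to integer-rounding of $l^\infty$-norms together with the fact that $\beta$ was fixed large enough to dominate every discretization error encountered. This near-identity with Lemma~\ref{lem:reducetostar} is why the formal write-up can safely be compressed.
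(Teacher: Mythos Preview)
Your proposal is correct and follows exactly the route the paper indicates: the paper omits the proof entirely, stating that it is ``identical to that of Lemma~\ref{lem:reducetostar}'', and your write-up carries out precisely that adaptation (discretize the planar path to $G\subset F_Z$, use $*$-connectedness, then convert to a nearest-neighbor path via corner insertions, with $\beta=2\sqrt d+4$ absorbing all rounding errors). There is nothing to add.
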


The following lemma relates $\widehat{A}_n^{\alpha}$ with $B_{k,x}^{\alpha}$ by asserting that on the event $\big(\widehat{A}_{n}^{\alpha}\big)^{c}$, $B_{k,x}^{\alpha}$ must happen for a fixed number of choices of $x$.
\begin{lem}\label{fakedec}
For all $n\geq 2$,  
\begin{equation}\label{eq:fakedec}
 \big(\widehat{A}_{n}^{\alpha}\big)^{c} \subset \bigcup_{k\geq n-1} \bigcup_{x\in {\cal L}_{k-1}\cap F_Z,\;|x|_{\infty}\leq4L_{k+1}} B_{k,x}^{\alpha}.
\end{equation}
\end{lem}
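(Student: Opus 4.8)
The plan is to show that on $(\widehat A^{\alpha}_{n})^{c}$ the interlacement set, restricted to the slab $F$, contains a connected set that ``surrounds'' the square $B_\infty(0,L_n)\cap F$ inside the plane $F$, and then to extract from it a crossing of one of the dyadic annuli on the right-hand side of \eqref{eq:fakedec}. First I reduce to the slab: on $(\widehat A^{\alpha}_{n})^{c}$ there is no continuous path in $\mathcal V^{\alpha}_{1}$ joining $B_\infty(0,L_n)$ to infinity, hence a fortiori no such path lying in $F$, so, identifying $F$ with $\mathbb R^2$, the open set $\mathcal V^{\alpha}_{1}\cap F$ does not connect $B_\infty(0,L_n)\cap F$ to infinity inside $F$. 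By planar duality (together with the fact that $\mathcal I^{\alpha}_{1}$ is closed and, being a locally finite union of Wiener sausages, has path-connected components once intersected with $F$; compare Proposition 12.25 in \cite{Topology}), there is a path-connected closed set $\mathcal C\subseteq\mathcal I^{\alpha}_{1}\cap F$ that separates $B_\infty(0,L_n)\cap F$ from infinity in $F$, in the sense that every continuous path in $F$ from $B_\infty(0,L_n)\cap F$ to infinity meets $\mathcal C$. (One may instead first pass to a discrete picture in $F_{Z}\cong\mathbb Z^2$ as in the proof of Lemma \ref{lem:reducetostar} and use discrete planar duality; the fattening built into $\overline B^{\alpha}_{k,x}$ then absorbs the resulting mismatch of radii.)

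Next comes the geometry. Put $\rho=\inf_{y\in\mathcal C}|y|_\infty$ and fix $p_0\in\mathcal C$ with $|p_0|_\infty=\rho$. Feeding the separation property the two half-lines $t\mapsto(\pm t,0,\dots,0)$ issued from $0$, and the two half-lines issued from $(\pm L_n,0,\dots,0)\in B_\infty(0,L_n)\cap F$ in the $\pm x_1$-directions, produces points of $\mathcal C$ lying on opposite sides of the $x_1$-axis, with $|x_1|$-coordinate $\ge\rho$ respectively $\ge L_n$; consequently there is $q\in\mathcal C$ with $|q-p_0|_\infty\ge s:=\max(\rho,L_n)$. Now choose the scale: if $\rho$ is small (precisely, $\rho\le\tfrac{287}{12}L_{n-1}$) take $k=n-1$ — legitimate since then $k-1=n-2\ge0$ because $n\ge2$ — and otherwise (so that $s=\rho$) take $k$ to be the largest integer with $L_k<\tfrac{12}{25}s$. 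In both cases one checks $k\ge n-1$ and
\[
\tfrac{12}{287}\,\rho\ \le\ L_k\ <\ \tfrac{12}{25}\,s ,
\]
such a $k$ existing because $287/25>6$ is exactly the ratio $L_{k+1}/L_k$. Let $x\in\mathcal L_{k-1}\cap F_{Z}$ be a lattice point nearest $p_0$, so $|x-p_0|_\infty\le\tfrac12 L_{k-1}=\tfrac1{12}L_k$. Then $p_0\in\mathcal C\cap B_\infty(x,L_k)$; moreover $|x|_\infty\le\rho+\tfrac1{12}L_k\le24L_k=4L_{k+1}$, so $x$ is one of the admissible centres; and $|q-x|_\infty\ge s-\tfrac1{12}L_k>2L_k$, so $\mathcal C\not\subseteq B_\infty(x,2L_k)$.

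Finally, $\mathcal C$ is path-connected, contained in $\mathcal I^{\alpha}_{1}\cap F$, meets $B_\infty(x,L_k)$ and is not contained in $B_\infty(x,2L_k)$; stopping a path inside $\mathcal C$ from $p_0$ to a point outside $B_\infty(x,2L_k)$ the first time it hits $\partial B_\infty(x,2L_k)$ yields a continuous path in $\mathcal I^{\alpha}_{1}\cap F$ connecting $B_\infty(x,L_k)$ to $B_\infty(x,2L_k)$, which is precisely the event $B^{\alpha}_{k,x}$. Since $k\ge n-1$ and $x\in\mathcal L_{k-1}\cap F_{Z}$ with $|x|_\infty\le4L_{k+1}$, this proves \eqref{eq:fakedec}. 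I expect the main obstacle to be the first step — producing the separating connected set inside $\mathcal I^{\alpha}_{1}\cap F$ — which rests on plane topology and on the (mild) local path-connectedness of the interlacement set; the geometric second step is elementary, but one must be careful that the constant $4$, the renormalisation factor $6$ of $\mathcal L_{k-1}$, and the requirement $k\ge n-1$ are mutually compatible, which is exactly why the small case distinction on $\rho$ is needed.
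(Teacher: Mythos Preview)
Your proof is correct and follows the same two-step template as the paper: first invoke planar duality in the slab $F$ to produce a connected separating set $\mathcal C\subseteq\mathcal I^{\alpha}_1\cap F$ (the paper phrases this as a surrounding loop $\gamma$), then extract from it an annulus crossing at some dyadic scale $k\ge n-1$ centred at an admissible $x$.

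The only substantive difference is in how the scale is chosen. The paper picks the unique $k$ with $\gamma\subseteq B_\infty(0,4L_{k+1})$ but $\gamma\not\subseteq B_\infty(0,4L_k)$, i.e.\ it keys on the \emph{outer} extent of the separating curve, and then argues by contradiction (if $\gamma\subseteq B_\infty(x_0,2L_k)$ then the bounded Jordan region containing the origin would force $\gamma\subseteq B_\infty(0,4L_k)$). You instead key on the \emph{inner} radius $\rho=\inf_{y\in\mathcal C}|y|_\infty$ together with the span $s=\max(\rho,L_n)$, and choose $k$ so that $L_k$ lands in the window $[\tfrac{12}{287}\rho,\tfrac{12}{25}s)$, with a small case split to guarantee $k\ge n-1$. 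Your bookkeeping with the constants $\tfrac{287}{12}$ and $\tfrac{12}{25}$ is tighter than the paper's (which is somewhat sketchy about why $Q_k\ne\emptyset$ and why $|x_0|_\infty\le 4L_{k+1}$), and your diameter argument via the two pairs of half-lines is a clean way to exhibit the far point $q$. Both routes are equivalent in spirit; yours trades the Jordan-region contradiction for explicit arithmetic, which makes the compatibility of the constant $4$, the factor $6$, and the constraint $k\ge n-1$ transparent. You are also right to flag the topological first step as the genuinely delicate point---the paper asserts it without proof as well.
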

\begin{proof}
On the event $\big(\widehat{A}_{n}^{\alpha}\big)^{c}$, $S(0,L_{n})$ is completely surrounded by a continuous path $\gamma:[0,1]\to F$, such that 
\begin{equation}
\gamma(0)=\gamma(1)\quad\mbox{ and }\quad\gamma([0,1])\subset{\cal I}_{1}^{\alpha}\cap F.
\end{equation}
Now suppose $\gamma\subseteq B_\infty(0,4L_{k+1})$ but $\gamma \nsubseteq B_\infty(0,4L_{k})$ for
some $k\geq n-1$. We then consider 
\begin{equation}
Q_{k}=\{x\in\mathcal{L}_{k-1}:\gamma\cap S(x,L_{k})\neq\emptyset\}.
\end{equation}
Pick $x_{0}\in Q_{k}$. We claim that
\begin{equation}\label{eq:fakedeccontra}
\gamma\nsubseteq B_\infty(x_{0},2L_{k})
\end{equation}
hence there is a continuous path from $S(x_0,L_{k})$ to $S(x_0,2L_{k})$.% which satisfies the requirements in the statement of Lemma \ref{fakedec}.
Now, suppose (\ref{eq:fakedeccontra}) is not true. Then the $l^\infty$-diameter of $\gamma$ must be smaller or equal to $4L_{k}$. But if this is the case, since on $F$ the origin is surrounded by $\gamma$,  we would have $\gamma\subseteq B_\infty(0,4L_{k})$, a contradiction! This finishes the proof of (\ref{eq:fakedec}).
\end{proof}

This subsection culminates in the following proposition, which almost immediately implies the second half of Theorem \ref{thm:phase transition}, as we will see in Section \ref{denouement}.

\begin{prop}\label{prop:lb}
There exist $L_0,\widehat{\alpha}>0$, such that for all $\alpha\in(0,\widehat{\alpha})$ and all $x\in F_Z$,
\begin{equation}
\mathbb{P}\left[\overline{B}_{n,x}^{\alpha}\right]\leq(1/4)^{2^{n}}\label{eq:Bbarub}
\end{equation}
holds for large $n$.
\end{prop}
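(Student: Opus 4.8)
# Proof Proposal for Proposition \ref{prop:lb}

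The plan is to mirror the structure of the proof of Proposition \ref{prop:ubbi}, but working inside the two-dimensional slab $F_Z$ and tracking the interlacement set rather than the vacant set, so that the combinatorial complexity $C^{2^n}$ of embeddings must be beaten by a probability cost that is small when $\alpha$ is small rather than when $\alpha$ is large. First I would apply Lemma \ref{set2} (with the planar version of the embedding notion, $\Lambda^F_{n,x}$): on the event $\overline{B}_{n,x}^{\alpha}$ there is a nearest-neighbor (hence $*$-) path in ${\cal I}_\beta^\alpha\cap F_Z$ crossing from $S^{(2)}(x,L_n-1)$ to $S^{(2)}(x,2L_n)$, so there exists a proper embedding $\widehat{\cal T}\in\Lambda^F_{n,x}$ such that the path meets $S^{(2)}(\widehat{\cal T}(m),L_0-1)$ for every leaf $m\in T_{(n)}$. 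Translating back to the continuous picture and using \eqref{latticeobservation}, this forces ${\cal I}_1^{\alpha}$ (more precisely, after adjusting radii by $\beta/2$ versus $\beta$, the union of sausages) to intersect the $2^n$ disjoint ``frames'' $B(S^{(2)}(\widehat{\cal T}(m),L_0),\beta)$, one around each leaf. By Lemma \ref{set1} there are at most $C^{2^n}$ choices of embedding, so a union bound gives
\begin{equation}
\mathbb{P}[\overline{B}_{n,x}^{\alpha}]\leq C^{2^n}\max_{{\cal T}\in\Lambda^F_{n,x}}\mathbb{P}\big[{\cal I}_1^{\alpha}\textrm{ meets }B(S^{(2)}({\cal T}(m),L_0),\beta)\textrm{ for all }m\in T_{(n)}\big],
\end{equation}
and it remains to show the maximal probability is at most $(C^{-1}/4)^{2^n}$ once $L_0$ is large and $\alpha$ is small.

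The core estimate is therefore a large-deviation bound on the number of leaf-frames hit by the trajectories of the interlacement. Write $K_m=B(S^{(2)}({\cal T}(m),L_0),\beta)$ for $m\in T_{(n)}$, and let $Y$ be the number of frames $K_m$ hit by at least one trajectory of $\omega$ (the level-$\alpha$ interlacement process). The event in question is $\{Y=2^n\}$. I would control $Y$ via the Poissonian structure: by \eqref{eq:local} the trajectories hitting $\bigcup_m K_m$ form a Poisson process, and a single trajectory started from the equilibrium measure of $\bigcup_m K_m$ hits relatively few frames because the frames are well-separated by Lemma \ref{spreadout} — a leaf $m'$ at lexical distance $k$ from $m$ satisfies $|y-z|\geq L_{k-1}$ for $y,z$ in the respective frames, so the probability that a Brownian path, having hit $K_m$, goes on to hit $K_{m'}$ decays like $(L_{k-1}/L_0)^{2-d}$ (using \eqref{eq:hitting proba}, \eqref{eq:greenvalue} and the capacity bound \eqref{eq:salamiupperbound}, which gives ${\rm cap}(K_m)\leq cL_0$ for $d\geq 4$ and $cL_0/\ln L_0$ for $d=3$). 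Summing over $k$ with the multiplicity $|T_{(n)}^{m,k}|=2^{k-1}$ from \eqref{numeachlevel} and the geometric growth $L_k=L_0 6^k$, the expected number of additional frames hit by one trajectory after its first frame is bounded by a constant times $L_0^{2-d}\cdot L_0$ (or $L_0/\ln L_0$ when $d=3$), i.e.\ $o(1)$ as $L_0\to\infty$; meanwhile the expected total number of trajectories hitting $\bigcup_m K_m$ is at most $\alpha\,{\rm cap}(\bigcup_m K_m)\leq \alpha\sum_m{\rm cap}(K_m)\leq \alpha\, c\, 2^n L_0$ by subadditivity \eqref{eq:union}. Hence $\mathbb{E}[Y]\leq c\,\alpha\,2^n L_0\cdot(1+o_{L_0}(1))$, and a standard Poisson/Chebyshev or exponential-moment argument (in the spirit of the computation in \cite{Shortproof}) upgrades this to $\mathbb{P}[Y=2^n]\leq (c'\alpha L_0)^{c''2^n}$ or an analogous exponential bound; choosing first $L_0$ large enough to absorb the $o(1)$ terms (and to make the $d=3$ logarithm help) and then $\alpha$ small enough, this is $\leq (C^{-1}/4)^{2^n}$, which combined with the displayed union bound yields \eqref{eq:Bbarub} for all large $n$.

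There is one subtlety in the order of quantifiers worth flagging, and it is also what I expect to be the main obstacle: the separation constant $L_{k-1}$ in Lemma \ref{spreadout} is tied to the \emph{same} $L_0$ that sets the size of the frames, so enlarging $L_0$ simultaneously enlarges the frames (bad, their capacity grows) and the gaps (good). The point is that the frame capacity grows like $L_0$ while the ``reach'' of a trajectory into far frames shrinks like $L_0^{2-d}$ with $d-2\geq 1$, so the product $L_0\cdot L_0^{2-d}=L_0^{3-d}$ stays bounded (for $d=3$ it is constant and the $\ln L_0$ in \eqref{eq:salamiupperbound} gives the decisive gain; for $d\geq 4$ it decays). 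Making this trade-off quantitatively correct — in particular verifying that the per-trajectory expected number of hit frames is genuinely bounded uniformly in $n$ and can be made small, rather than merely finite — is the delicate computation, and it is precisely the planar analogue of the ``energy--entropy competition'' described in the introduction. Once that uniform bound is in hand, the large-deviation step and the final choice of constants are routine, and the proposition follows.
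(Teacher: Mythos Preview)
Your proposal is correct and follows essentially the same approach as the paper: apply Lemma~\ref{set2} in the planar setting to get a union bound over $\Lambda_{n,x}^F$, then control the probability that all $2^n$ leaf-frames are hit via a large-deviation estimate on the total number of frame hits, using Lemma~\ref{spreadout} and the capacity bound~(\ref{eq:salamiupperbound}) to show the per-trajectory ``hop'' probability $p$ is at most $q_d(L_0)=o_{L_0}(1)$, and finally choose $L_0$ large and then $\widehat\alpha$ small. The paper's only notable difference is that it tracks $\sum_j\mathcal{N}(w_j)\geq 2^n$ (total hits with multiplicity) rather than your $Y=2^n$, and makes the exponential Chebyshev step explicit by citing the computation below (5.7) in~\cite{Shortproof}, yielding the bound $(Cq_d(L_0))^{2^n}\exp\big(\alpha\,\mathrm{cap}(B(\square_0,\beta))/q_d(L_0)\big)^{2^n}$.
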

\begin{proof}%[Proof of Proposition \ref{prop:lb}]
We postpone the choice of $L_{0}$ till the end of this proof. 

For our convenience we write $\square_x$ for $S^{(2)}(x,L_0-1)$. Without loss of generality we take $x=0$. For ${\cal T}_F\in\Lambda_{n,0}^{F}$ (see below (\ref{distanceprop}) for the definition of $\Lambda_{n,0}^{F}$), we write 
\begin{equation}
K(\mathcal{T}_F)=\bigcup_{m\in T_{(n)}}\square_{\mathcal{T}(m)},
\end{equation}
which is a subset of $F$, and
\begin{equation}\label{eq:Kprimedef}
K'(\mathcal{T}_F)=B(K(\mathcal{T}_F),\beta),
\end{equation}
which is a subset of $\mathbb{R}^d$.
%{\cal X}_{T}^{L_{0}-1}
%Remind that ${\cal X}_{T}^{1}=??$, defined in () is the union of
%boxes of size $1$ with centers in $T_{(n)}$. 
By Lemmas \ref{set1} and \ref{set2}, one has that
\begin{equation}\label{eq:Bba2}
\begin{split}
\mathbb{P}[\overline{B}_{n,0}^{\alpha}]&\stackrel{(\ref{eq:set2conclu})}{\leq}
\mathbb{P}\left[\bigcup_{{\cal T}_F\in\Lambda_{n,0}^{F}}\bigcap_{m\in T_{(n)}}\big\{\square_{\mathcal{T}_F(m)} \cap{\cal I}_{\beta}^{\alpha}\neq\emptyset\big\}\right]\\ 
&\stackrel{(\ref{eq:set1})}{\leq} C^{2^{n}}\max_{{\cal T}_F\in\Lambda_{n,0}^{F}}\mathbb{P}\left[\bigcap_{m\in T_{(n)}}\big\{\square_{\mathcal{T}_F(m)} \cap{\cal I}_{\beta}^{\alpha}\neq\emptyset\big\}\right].
\end{split}
\end{equation}
Hence, to prove (\ref{eq:Bbarub}), it suffices to derive an adequate upper bound on 
\begin{equation}
\max_{{\cal T}_F\in\Lambda_{n,0}^{F}}\mathbb{P}\left[\bigcap_{m\in T_{(n)}}\big\{\square_{\mathcal{T}_F(m)} \cap{\cal I}_{\beta}^{\alpha}\neq\emptyset\big\}\right].
\end{equation}
Let ${\cal T}_F\in\Lambda_{n,0}^{F}$ be
an embedding of $T_{n}$ into $F_{Z}$. We drop the dependence on ${\cal T}_F$ in notation whenever there is no confusion arising.
For $w\in W^+$,
we write by ${\cal N}(w)$ the number of frames (i.e.~$\square_{\cdot}$) with
centers in ${\cal T}_F(T_{(n)})$ (``leaves'' of the embedding of dyadic tree) 
which are hit by the sausage of $w$ with radius $\beta$, i.e, 
\begin{equation}
{\cal N}(w)=\Big|\{m\in T_{(n)},\;B(w([0,\infty)),\beta)\cap\square_{\mathcal{T}_F(m)} \neq\emptyset\}\Big|.
\end{equation}
Now, if ${\cal I}_{\beta}^{\alpha}$ intersects with all the frames  on the leaf level $T_{(n)}$, then the total count of hits must be at least $2^n$, i.e.,
\begin{equation}\label{eq:Bba3}
\mathbb{P}\left[\bigcap_{m\in T_{(n)}}\big\{\square_{\mathcal{T}_F(m)} \cap{\cal I}_{\beta}^{\alpha}\neq\emptyset\big\}\right]\leq \mathbb{P}\left[\sum_{j=1}^{N}{\cal N}(w_j)\geq2^{n}\right],
\end{equation}
where $N$ is determined through
\begin{equation}
\mu_{K',\alpha}(\omega)=\sum_{j=1}^{N}\delta_{(w_j,\alpha_j)}
\end{equation}
(see (\ref{eq:mukadef}) and (\ref{eq:Kprimedef}) for notation). Now we write %for any $m\in T_{(n)}$ and $y\in B(\square_{{\cal T}(m)},\beta)$,
\begin{equation}
p= \max_{m\in T_{(n)},\;y\in B(\square_{{\cal T}_F(m)},\beta)} P_{y}\Big[X_{[0,\infty)}\cap B(K\backslash \square_{{\cal T}_F(m))}, \beta) \neq \emptyset\Big].
\end{equation}
By the strong Markov property of Brownian motion, one knows that 
\begin{equation}\label{eq:puse}
P_{\widetilde{e}_K}[{\cal N}(X)\geq k]\leq p^{k-1}
\end{equation}
for any $k\geq 1$.
%{\cal T}}^{\square}\backslash\square_{{\cal T}(m)}\neq\emptyset].
We are also able show that $p$ can be taken arbitrarily small if we take $L_0$ sufficiently large. In fact,  for any $m\in T_{(n)}$ and $y\in B(\square_{{\cal T}_F(m)},\beta)$,
\begin{eqnarray}
p&\leq & \max_{m\in T_{(n)},\;y\in B(\square_{{\cal T}_F(m)},\beta)}\sum_{k=1}^{n}\sum_{m'\in T_{(n)}^{m,k}}P_{y}[X_{[0,\infty)}\cap B(\square_{{\cal T}_F(m')},\beta)\neq\emptyset]\nonumber \\
 & \stackrel{(\ref{eq:spreadout}),(\ref{eq:greenvalue})}{\underset{(\ref{eq:hitproba})}{\leq}} & \sum_{k=1}^{n}\sum_{m'\in T_{(n)}^{m,k}}cL_{k-1}^{2-d}{\rm cap}\Big(B(\square_{{\cal T}_F(m')},\beta)\Big)\label{eq:puse2}\\
 & \stackrel{(\ref{numeachlevel})}{\leq} &  \sum_{k=1}^{n} (6^{2-d}\cdot 2)^{k-1}L_{0}^{2-d}c{\rm cap}\Big(B(\square_{0},\beta)\Big)\stackrel{(\ref{eq:salamiupperbound})}{\leq} \begin{cases}
c'L_0^{3-d} & d\geq4\\
c''(\ln L_0)^{-1} & d=3
\end{cases}\stackrel{\triangle}{=}q_d(L_0).\nonumber 
\end{eqnarray}

With the help from (\ref{eq:puse}) and (\ref{eq:puse2}), by the same argument (involving exponential Chebyshev inequality) as in the proof of Proposition 5.1 in \cite{Shortproof} (see below (5.7) in \cite{Shortproof}), 
we know that 
\begin{equation}\label{eq:puse3}
\mathbb{P}[\overline{B}_{n,x}^{\alpha}]\stackrel{(\ref{eq:Bba2})}{\underset{(\ref{eq:Bba3})}{\leq}}C^{2^n}\max_{{\cal T}\in\Lambda_{n,0}^{F}}\mathbb{P}\Big[\sum_{j=1}^{N}{\cal N}(w_j)\geq2^{n}\Big]\leq 
\Big(Cq_d(L_0)\Big)^{2^n}\left[\exp\bigg(\alpha \frac{\mathrm{cap}(B(\square_0,\beta)}{q_d(L_0)}\bigg)\right]^{2^n}. 
\end{equation}

We first fix $L_0$  sufficient large, such that $Cq_d(L_0)<1/8$, and then choose $\widehat{\alpha}$ sufficiently small such that for all $\alpha \in (0,\widehat{\alpha})$, $\exp\left(\alpha \frac{\mathrm{cap}(B(\square_0,\beta)}{q_d(L_0)}\right)<2$. With this choice of $L_0$ and $\widehat{\alpha}$, we know that the right hand side of (\ref{eq:puse3}) is bounded above from $(1/4)^{2^n}$. This finishes the proof of (\ref{eq:Bbarub}).
\end{proof}
\subsection{Denouement}\label{denouement}
In this subsection we use Propositions \ref{prop:ubbi} and \ref{prop:lb} to prove Theorem \ref{thm:phase transition}.
\begin{proof}[Proof of Theorem \ref{thm:phase transition}]
We start by showing that when $\alpha$ is sufficiently large, the vacant set $\mathcal{V}^\alpha_\beta$ does not percolate. % that $\alpha_{1}^*$, the percolation threshold for $\mathcal{V}^\alpha_1$, is finite. 
By the definition of $A_n^\alpha$ (see (\ref{Analphadef})) one knows that for $\alpha>\alpha^{\#}$ and for all $M\geq 0$,
\begin{equation}\label{denu1}
\mathbb{P}\Big[{\cal V}_{\beta}^{\alpha}\mbox{ percolates}\Big]\leq\mathbb{P}\left[\bigcup_{n\geq M} A_{n}^{\alpha}\right]
\end{equation}
and by Proposition \ref{prop:ubbi},
\begin{equation}
\mathbb{P}\Big[\cup_{n\geq M} A_{n}^{\alpha}\Big]\leq \sum_{n\geq M} (1/2)^{2^{n}}\to0\mbox{ as $M$ tends to infinity}.
\end{equation}
%By (\ref{eq:01perco}),
This means that % percolation threshold for $\mathcal{V}^\alpha_\beta$ is no more than $\alpha^\#$.
\begin{equation}
\inf\Big\{\alpha\geq 0:\,\mbox{$\mathcal{V}^\alpha_\beta$ does not percolate a.s.}\Big\}\leq \alpha^\#.
\end{equation}
By the scaling property (see (\ref{scaling})) of Brownian interlacements, we obtain that
\begin{equation}
\alpha^*_1\stackrel{\triangle}{=}\inf\Big\{\alpha\geq 0:\,\mbox{$\mathcal{V}^\alpha_1$ does not percolate a.s.}\Big\}\leq\alpha^{\#}\beta^{d-2}<\infty,
\end{equation}
and that for all $r>0$,
 \begin{equation}
  \alpha^*_r=\alpha^*_1r^{2-d}=\inf\{\alpha\geq 0:\,\mbox{$\mathcal{V}^\alpha_r$ does not percolate a.s.}\},
  \end{equation}
proving the first part of (\ref{eq:pt}).

We then claim that
\begin{equation}\label{eq:denu2}
\mbox{$\mathcal{V}^\alpha_1$ percolates almost surely, when  $\alpha<\widehat{\alpha}$}
\end{equation}  
(see the statement of Proposition \ref{prop:lb} for the definition of $\widehat{\alpha}$), which readily implies %that 
%$\alpha_1^*\geq\alpha^*>0$. By (\ref{eq:01perco}) and the scaling property, this further implies that for all $r>0$, when $\alpha<\alpha^*_r$, $\mathcal{V}^\alpha_1$ percolates almost surely, and 
the second part of (\ref{eq:pt}).%readily follows.

We now prove (\ref{eq:denu2}).  For any $M\geq 0$
\begin{equation}\label{eq:notpercolate}
\mathbb{P}[{\cal V}_{\beta}^{\alpha}\mbox{ does not percolate}]\leq\mathbb{P}\left[\bigcap_{n\geq M} (\widehat{A}_{n}^{\alpha})^c\right].
\end{equation}
Since $(\widehat{A}_{n}^\alpha)_{n\geq 0}$ is a sequence of increasing events, to prove (\ref{eq:denu2}) it suffices to show that
for all $\alpha<\widehat{\alpha}$,
\begin{equation}\label{eq:sh1}
\lim_{n\to\infty}\mathbb{P}\left[\big(\widehat{A}_{n}^{\alpha}\big)^{c}\right]=0.
\end{equation}
Now we prove (\ref{eq:sh1}). One first notices that by Lemma \ref{fakedec},
\begin{equation}\label{eq:sh2}
\mathbb{P}\left[\big(\widehat{A}_{n}^{\alpha}\big)^{c}\right]\leq\mathbb{P}\left[\bigcup_{k=n-1}^{\infty}\bigcup_{x\in{\cal L}_{k-1}\cap F_Z,\,|x|_\infty\leq4L_{k+1},}B_{k,x}^{\alpha}\right].
\end{equation}

By Lemma \ref{Binclusion}, 
 the fact that $|{\cal L}_{k-1}\cap S^{(2)}(0,4L_{k+1})|\leq C=C(d)$ and Proposition \ref{prop:lb}, one obtains that for $\alpha<\widehat{\alpha}$,
\begin{equation}
\mathbb{P}\left[\big(\widehat{A}_{n}^{\alpha}\big)^{c}\right]\stackrel{(\ref{eq:sh2})}{\underset{(\ref{eq:Bincl})}{\leq}}\mathbb{P}\left[\bigcup_{k=n-1}^{\infty}\bigcup_{x\in{\cal L}_{k-1}\cap F_Z,\,|x|_\infty\leq 4L_{k+1},}\overline{B}_{k,x}^{\alpha}\right]
\stackrel{(\ref{eq:Bbarub})}{\leq}\sum_{k=n}^{\infty}C(1/4)^{2^{k-1}}\to0\textrm{ as }n\to\infty.
\end{equation}
 This completes the proof of (\ref{eq:sh1}) and %Hence one has
%\begin{equation}
%\alpha_{1}^{*}\geq \alpha^\star>0.
%\end{equation}
finishes the proof of (\ref{eq:pt}).

%The claim (\ref{eq:edecay}) is a simple corollary of Proposition \ref{prop:ubbi}-.
%-zzz and scaling property of Brownian interlacements.
%This finishes the proof of Theorem \ref{thm:phase transition}.
\end{proof}
\begin{remark}\label{rem:endremark}
1) Recall the definition of $\alpha^{**}_r$, $\alpha^{\#}$ and $\beta$ respectively in (\ref{eq:astarstar}), Proposition \ref{prop:ubbi} and (\ref{eq:betachoice}). It follows from Proposition \ref{prop:ubbi}, the definition of events $A_n^\alpha$ (see (\ref{Analphadef})), and scaling property of Brownian interlacements that
\begin{equation}
\big(0<\big)\alpha_r^*\leq\alpha_r^{**}\leq \alpha^{\#}(\beta/r)^{d-2}\big(<\infty\big).
\end{equation}It is a natural question whether the two thresholds $\alpha_r^*$ and $\alpha_r^{**}$ coincide, or, in other words, whether the phase transition for the vacant set is sharp. As the corresponding conjecture in the case of random interlacements still remains open, we speculate that in our case, this question is also not easy to answer. It might be even harder to answer what happens at these critical values, e.g., whether $\mathcal{V}^{\alpha^{*}_1}_{1}$ percolates or not. 

\noindent  2) Note that as a byproduct of the proof of Theorem \ref{thm:phase transition}, we obtain that when $\alpha<\widehat{\alpha}$, $\mathcal{V}^\alpha_1$ percolates not only in the whole space, but in a plane and in a slab  $F_R=\mathbb{R}^2 \times [0,R]^{d-2}$  (where $R>0$ stands for the ``thickness'' ) as well. With this observation in mind we define for $r>0$
\begin{equation}
\widetilde{\alpha}_r=\sup\Big\{\alpha\geq 0: \mathbb{P}[\mathcal{V}^{\alpha}_r\mbox{ percolates in a plane}]=1\Big\}
\end{equation} 
and
\begin{equation}
\widetilde{\alpha}_{R,r}=\sup\Big\{\alpha\geq 0: \mathbb{P}[\mathcal{V}^{\alpha}_r\mbox{ percolates in $F_R$}]=1\Big\}.
\end{equation} 
It follows that
\begin{equation}
\big(\widehat{\alpha} r^{2-d}\leq\big)\widetilde{\alpha}_r\leq \widetilde{\alpha}_{R,r}\leq \alpha^*_r,
\end{equation} %yet it remains a conjecture $\widetilde{\alpha}_r= \widetilde{\alpha}_{R,r}=\alpha^*_r$. 
By an intuitive analogy to the case of Bernoulli percolation (see e.g.~Sections 7.1 and 7.2 in \cite{Grimmett}), we think it's very plausible that $\widehat\alpha r^{2-d}=\widetilde\alpha_r$, while $\widetilde{\alpha}_r<\alpha^*_r$ but $\lim_{R\to\infty}\widetilde{\alpha}_{R,r}= \alpha^*_r$. Techniques developed in \cite{Sharpperco} may be helpful for answering the first conjecture.

\noindent 3) We are also prompted to ask whether the unbounded cluster in $\mathcal{V}^\alpha_\beta$ is unique %in the percolative phase of $mathcal{V}^\alpha_\beta$, i.e.~
in the supercritical regime and wonder if it is possible to adapt the proof of the uniqueness of infinite cluster in the vacant set of random interlacements in \cite{Uniqueness} to tackle this problem.
\end{remark}
\end{document}